\documentclass[11pt]{amsart}

\usepackage{amsmath,amssymb,amscd,amsthm,amsfonts}

\oddsidemargin=0in \evensidemargin=0in \textwidth=6.7in
\textheight=9in
\parindent=0.0in
\begin{document}
\theoremstyle{plain}
\newtheorem{thm}{Theorem}[section]
\newtheorem*{thm*}{Theorem}
\newtheorem{prop}[thm]{Proposition}
\newtheorem*{prop*}{Proposition}
\newtheorem{lemma}[thm]{Lemma}
\newtheorem{cor}[thm]{Corollary}
\newtheorem*{conj*}{Conjecture}
\newtheorem*{cor*}{Corollary}
\newtheorem{defn}[thm]{Definition}
\newcommand\sL{{\mathcal{L}}}\newcommand\aMtilde{{\tilde \aM}}
\newcommand\Brl{{\mathcal{B}}}
\newcommand\Rl{{\mathbb{R}}}
\newcommand\Bd{B}
\newcommand\cC{{\ucal{C}}}
\newcommand\aM{{\mathcal{M}}}\newcommand\Cx{{\mathbb{C}}}
\newcommand\aN{{\mathcal{N}}}
\newtheorem{cond}{Condition}
\theoremstyle{definition}
\newtheorem*{defn*}{Definition}
\newtheorem{rems}[thm]{Remarks}
\newtheorem*{rems*}{Remarks}
\newtheorem*{proof*}{Proof}
\newtheorem*{not*}{Notation}
\newcommand{\npartial}{\slash\!\!\!\partial}
\newcommand{\Heis}{\operatorname{Heis}}
\newcommand{\Solv}{\operatorname{Solv}}
\newcommand{\Spin}{\operatorname{Spin}}
\newcommand{\SO}{\operatorname{SO}}
\newcommand{\ind}{\operatorname{ind}}
\newcommand{\Index}{\operatorname{index}}
\newcommand\Mat{\operatorname{Mat}}
\newcommand\Mn{\operatorname{Mat}_n}
\newcommand{\ch}{\operatorname{ch}}
\newcommand{\rank}{\operatorname{rank}}
\newcommand{\dom}{\operatorname{dom}}
\newcommand{\abs}[1]{\lvert#1\rvert}
 \newcommand{\A}{{\mathcal A}}
        \newcommand{\D}{{\mathcal D}}\newcommand{\HH}{{\mathcal H}}
        \newcommand{\LL}{{\mathcal L}}
        \newcommand{\B}{{\mathcal B}}
        \newcommand{\F}{{\mathcal F}}
        \newcommand{\K}{{\mathcal K}}
\newcommand{\oo}{{\mathcal O}}
         \newcommand{\PP}{{\mathcal P}}
        \newcommand{\s}{\sigma}
\newcommand{\al}{\alpha}
        \newcommand{\coker}{\operatorname{coker}}
        \newcommand{\p}{\partial}
        \newcommand{\dd}{|\D|}
        \newcommand{\n}{\parallel}
\newcommand{\bma}{\left(\begin{array}{cc}}
\newcommand{\ema}{\end{array}\right)}
\newcommand{\bca}{\left(\begin{array}{c}}
\newcommand{\eca}{\end{array}\right)}
\newcommand\clsp{\overline{\operatorname{span}}}
\newcommand\T{\mathbb T}
\newcommand\Aut{\operatorname{Aut}}
\newcommand\End{\operatorname{End}}
\newcommand{\Res}{\operatorname{Res}}
\newcommand{\sr}{\stackrel}
\newcommand{\da}{\downarrow}
\newcommand{\tD}{\tilde{\D}}
\newcommand\Tr{\operatorname{Tr}}
\newcommand\Ind{\operatorname{Index}}

        \newcommand{\R}{\mathbf R}
        \newcommand{\C}{\mathbb C}
        \newcommand{\h}{\mathbf H}
\newcommand{\Z}{\mathbb Z}
\newcommand{\N}{\mathbb N}
\newcommand{\tto}{\longrightarrow}
\newcommand{\ben}{\begin{displaymath}}
        \newcommand{\een}{\end{displaymath}}
\newcommand{\be}{\begin{equation}}
\newcommand{\ee}{\end{equation}}

        \newcommand{\bean}{\begin{eqnarray*}}
        \newcommand{\eean}{\end{eqnarray*}}
\newcommand{\nno}{\nonumber\\}
\newcommand{\bea}{\begin{eqnarray}}
        \newcommand{\eea}{\end{eqnarray}}

\newcommand\cross[1]{\rlap{\hskip#1pt\hbox{$-$}}}
        \newcommand\intcross{\cross{0.3}\int}
        \newcommand\bigintcross{\cross{2.3}\int}

\newcommand\RR{{\mathcal R}}

\newcommand{\supp}[1]{\operatorname{#1}}
\newcommand{\norm}[1]{\parallel\, #1\, \parallel}
\newcommand{\ip}[2]{\langle #1,#2\rangle}
\setlength{\parskip}{.3cm}
\newcommand{\nc}{\newcommand}
\nc{\nt}{\newtheorem} \nc{\gf}[2]{\genfrac{}{}{0pt}{}{#1}{#2}}
\nc{\mb}[1]{{\mbox{$ #1 $}}} \nc{\real}{{\mathbb R}}
\nc{\comp}{{\mathbb C}} \nc{\ints}{{\mathbb Z}}
\nc{\Ltoo}{\mb{L^2({\mathbf H})}} \nc{\rtoo}{\mb{{\mathbf R}^2}}
\nc{\slr}{{\mathbf {SL}}(2,\real)} \nc{\slz}{{\mathbf
{SL}}(2,\ints)} \nc{\su}{{\mathbf {SU}}(1,1)} \nc{\so}{{\mathbf
{SO}}} \nc{\hyp}{{\mathbb H}} \nc{\disc}{{\mathbf D}}
\nc{\torus}{{\mathbb T}}
\newcommand{\tk}{\widetilde{K}}
\newcommand{\boe}{{\bf e}}\newcommand{\bt}{{\bf t}}
\newcommand{\vth}{\vartheta}
\newcommand{\CGh}{\widetilde{\CG}}
\newcommand{\db}{\overline{\partial}}
\newcommand{\tE}{\widetilde{E}}
\newcommand{\tr}{\mbox{tr}}
\newcommand{\ta}{\widetilde{\alpha}}
\newcommand{\tb}{\widetilde{\beta}}
\newcommand{\txi}{\widetilde{\xi}}
\newcommand{\hV}{\hat{V}}
\newcommand{\IC}{\mathbf{C}}
\newcommand{\IZ}{\mathbf{Z}}
\newcommand{\IP}{\mathbf{P}}
\newcommand{\IR}{\mathbf{R}}
\newcommand{\IH}{\mathbf{H}}
\newcommand{\IG}{\mathbf{G}}
\newcommand{\CC}{{\mathcal C}}
\newcommand{\CS}{{\mathcal S}}
\newcommand{\CG}{{\mathcal G}}
\newcommand{\CL}{{\mathcal L}}
\newcommand{\CO}{{\mathcal O}}
\newcommand\Ev{\operatorname{Ev}}
\newcommand\Ad{\operatorname{Ad}}
\nc{\ca}{{\mathcal A}} \nc{\cag}{{{\mathcal A}^\Gamma}}
\nc{\cg}{{\mathcal G}} \nc{\chh}{{\mathcal H}} \nc{\ck}{{\mathcal
B}} \nc{\cl}{{\mathcal L}} \nc{\cm}{{\mathcal M}}
\nc{\cn}{{\mathcal N}} \nc{\cs}{{\mathcal S}} \nc{\cz}{{\mathcal
Z}} \nc{\cM}{{\mathcal M}}
\nc{\sind}{\sigma{\rm -ind}}
\newcommand{\la}{\langle}
\newcommand{\ra}{\rangle}

\renewcommand{\labelitemi}{{}}

\title{Twisted cyclic theory, equivariant $KK$ theory and KMS States.}
\parskip=0.0cm
\author[A. L. Carey]{Alan L. Carey}
\address{Mathematical Sciences Institute, Australian National University,
Canberra, ACT 0200, Australia} \email{acarey@maths.anu.edu.au}
\author[S. Neshveyev]{Sergey Neshveyev}
\address{Department of Mathematics, University of Oslo,
P.O. Box 1053 Blindern, 0316 Oslo, Norway}
\email{sergeyn@math.uio.no}
\author[R. Nest]{Ryszard Nest}
\address{Department of Mathematics, Copenhagen University, Universitetsparken
5, 2100 Copenhagen, Denmark} \email{rnest@math.ku.dk}
\author[A. Rennie]{Adam Rennie}
\address{Mathematical Sciences Institute, Australian National University,
Canberra, ACT 0200, Australia} \email{rennie@maths.anu.edu.au}
\parskip=0.3cm

\maketitle

\vspace{-8pt}
\centerline{{\em Dedicated to the memory of Gerard Murphy}}

\vspace{4pt}
\centerline{Abstract} Recently, examples of an index theory for KMS
states of circle actions were discovered, \cite{CPR2,CRT}. We show that these
examples are not isolated. Rather there is a general framework
in which we use KMS states for circle actions on a $C^*$-algebra
$A$ to construct
Kasparov modules and  semifinite spectral triples.
By using a residue construction analogous to that
used in the  semifinite local index formula
we associate to these triples
a twisted cyclic cocycle on a dense subalgebra of $A$.
This cocycle pairs with the equivariant $KK$-theory of the mapping cone
algebra for the inclusion of the fixed point algebra of the circle action in $A$.
The pairing is expressed in terms of spectral flow between a pair of unbounded
self adjoint operators that are Fredholm in the semifinite sense.
A novel aspect of our work is the discovery of an eta cocycle that forms a part of our
twisted residue cocycle.
To illustrate our theorems we observe firstly that they incorporate
the results in \cite{CPR2,CRT} as special cases. Next we use
the Araki-Woods III$_\lambda$ representations of the
Fermion algebra to show that there are examples which are not Cuntz-Krieger systems.
\parskip=0.3cm

\section{Introduction}
\subsection{Background}
This paper presents an extension of noncommutative geometry and index
theory to the purely
infinite or type III case, that is, where we are looking at
situations in which there are no faithful traces
and we wish to replace them by KMS states. It
exploits some ideas from semifinite
noncommutative geometry, a recent extension of the standard type I
theory of Connes \cite{C}
which was begun in \cite{CP1}. Our main reference for the analytic
part of these
ideas is \cite{CPS2}, however there is more background in
\cite{BeF, CPRS1,CPRS2,CPRS3}.

We build on two interesting examples. In  \cite{CPR2,CRT} it was
discovered that there exist refined invariants of the Cuntz algebra
and the algebra $SU_q(2)$ which arise from KMS states.  In the
former case there is a canonical circle action (the gauge action)
and a  unique associated KMS state whose GNS representation is type
III while in the latter case the Haar state satisfies the KMS condition
with respect to a circle action. It was shown that certain unitaries in matrix
algebras over the Cuntz algebras and $SU_q(2)$
can be used to form a new type of
$K$-group. This group then pairs with twisted cyclic cocycles   to
produce real valued invariants. In \cite{CPR2} this construction was
termed  `modular index theory'. Not all unitaries in these
algebras define elements of the new $K$-group; only those satisfying a
side condition formulated in terms of the modular group of the KMS
state. These index pairings have not been seen before because they
use in an essential way the semifinite index theory from
\cite{CPS2,CPRS2,CPRS3}.

The nature of this modular theory in these examples is
mysterious. The
objective of this paper is to put the examples into a
general framework so that, rather than being isolated phenomena,
the modular index pairings in \cite{CPR2,CRT} can be seen to arise from
a more fundamental principle.
The germ of the idea comes from  \cite{CPR1} where it was observed
that the examples of semifinite noncommutative geometries discovered
in \cite{PRen} lead to classes in the $KK$-theory of a mapping cone
algebra. What we find in this paper is a more general framework that
uses equivariant $KK$-theory of mapping cone algebras. The
constructions we employ arise very naturally for a broad class of
$C^*$-algebras admitting states (or weights) that are KMS for circle
actions. We remark that although we do not discuss examples of
the case where we start with a weight on a $C^*$-algebra here,
we know from work in progress with M. Marcolli that such examples
exist and are of considerable independent interest.

\subsection{Summary of results}
Our basic data consists of a $C^*$-algebra $A$, together with a
strongly continuous action of the circle $\T$ by $*$-automorphisms $
\s\colon \T\to \Aut(A). $ We let $F$ denote the fixed point subalgebra
$A^\sigma$ of $A$, $\Phi$ the conditional expectation
$$
A\ni a\mapsto \frac{1}{2\pi }\int_\T \s_t (a) dt \in F ,
$$
and set $A_k=\{ a\in A \mid \s_t (a)=e^{ikt}a\}$. For the more
algebraic part of this paper (Section 2) we make a `spectral subspace
assumption' (SSA) that the ideals $F_k =\overline{A_k A_k^*}$ are
complemented in $F$. This generalises the notion of full spectral
subspaces. When we deal with purely analytic formulae in subsequent
Sections we can drop this SSA.
On the analytic side we assume there is a (possibly unbounded)
semifinite, norm lower semicontinuous, faithful, positive functional
$\phi\colon A\to\C$ satisfying the KMS$_\beta$ condition  for the
circle action $\s$ and $\beta\ne0$.

Associated to the pair
$(A,\s )$ there is an  unbounded Kasparov module $
({}_AX_F, \D)
$ constructed as follows. Define the $F$-valued scalar product $(a|b)_R =\Phi (a^* b)$
and
${}_AX_F$ as the corresponding $C^*$-module completion of $A$.
Then $\D$ is the generator
of the action of $\s$ on ${}_AX_F$ induced by the action of $\s$
on $A$.
The pair $({}_AX_F, \D)$ defines a class
$
[\D]:=[({}_AX_F, \D)]\in KK_1^\T (A, F).
$

Let $M=M(F,A)$ denote the mapping cone of the inclusion $F\subset
A$. The mapping cone extension
$$
0\rightarrow C_0(\mathbb{R}, A)\stackrel{\iota}{\rightarrow} M\stackrel{ev}{\rightarrow} F
\rightarrow 0
$$
gives us an exact sequence in equivariant $KK$-theory, part of which is
$$
KK_0^\T(M,F)\stackrel{\iota^*}{\rightarrow}KK_1^\T(A,F)\stackrel{\delta}{\to} KK_1^\T(F,F).
$$
Since $\D$ commutes with the left action of $F$, we have $\delta[\D]=0$, and so by exactness, 
there is a class $[\hat\D]\in KK_0^\T(M,F)$ with $\iota^*[\hat\D]=[\D]$.
In the text we will give an explicit construction of such a  $KK$-class
$[\hat\D]$ using the results of \cite{CPR1}. 

The cycles $[\D]$ and $[\hat\D]$ define, via the Kasparov product, two index maps:
$$
K_1^\T(A)
\xrightarrow{\Ind_\D}K_0^\T (F)
\ \mbox{
and }\
K_0^\T (M)\xrightarrow{\Ind_{\hat{\D}}} K_0^\T (F).
 $$
compatible with $\iota_* \colon K_1^\T (A)\rightarrow K_0^\T (M)$. 

We
shall also introduce a closely related homomorphism into the representation ring of
$\mathbb T$:
$$
sf\colon K_0^\T
(M)\to\Rl[\chi,\chi^{-1}],
$$
which we call the `equivariant spectral flow'. We will explain how the
constructions of a modular index pairing for the Cuntz algebras and
$SU_q(2)$ obtained previously in \cite{CPR2,CRT} can be explained (and
generalized) in terms of $\Ind_{\hat\D}$ and $sf$ together with the map
$\tau_*\colon K_0(F)\to\Rl$ induced by the trace~$\tau=\phi|_F$ and the
evaluation at $\chi=e^{-\beta}$ map $\Rl[\chi,\chi^{-1}]\to\Rl$ (Theorem
\ref{thm:ind-mod-uni}). Furthermore, the modular index pairing can be
computed quite explicitly {\em for a certain subgroup of $K_0^\T(M)$}
using an analytic formula for semifinite spectral flow
(Theorem~\ref{thm:analytic-index}).

As a new example we discuss the case of Araki-Woods factors which are
obtained from KMS states for the gauge action on the Fermion algebra.



\subsection{The theorems}\label{sub:thms}
For the reader's convenience we give details of the results here.
Let $\HH_\phi =L^2 (A,\phi )$, let $\pi_\phi \colon A\rightarrow
\B(\HH_\phi )$ denote the GNS representation and $\cn$ the commutant of
$J_\phi \pi_\phi(F) J_\phi$ in $\B(\HH_\phi )$.  Then $\cn$ is a
semifinite von Neumann algebra and $A\simeq \pi_\phi(A)\subset\cn$ with
a positive, faithful, semifinite trace $\mbox{Tr}_\phi$ (see Lemma
\ref{spatial trace}). Let also $\D$ denote the (self-adjoint) extension
of the operator $\D$ introduced above to $\HH_\phi$. Let $\Phi_k$ be
the projection onto the $k^{th}$ spectral subspace of $\D$ for each
$k\in\mathbb Z$. Finally, set $P=\chi_{[0,\infty)} (\D)$. Denote by
$\A$ the algebra consisting of finite sums of $\s$-homogeneous elements
in the domain $\dom(\phi)$ of $\phi$. We also put $\F=\A\cap
F=\dom(\tau)$, where $\tau=\phi|_F$.

We denote the unitization of our algebras by a superscripted $\tilde{}
$. Every class in $K^\T_0(M)$ has a representative $v$ such that
$v\in(\A^\sim\otimes B(\HH_U))^{\s\otimes\Ad U}$, $vv^*$ and $v^*v$ are
in $\F^\sim\otimes B(\HH_U)$, and $vv^*=v^*v$ modulo $\F\otimes
B(\HH_U)$, where $U\colon \T\to B(\HH_U)$ is a finite dimensional
unitary representation (Lemma~\ref{lem:rep}). Henceforth we restrict to
such $v$. Then we define the equivariant spectral flow as follows.
Denote by~$\chi^n$ the one-dimensional representation $t\mapsto
e^{int}$ and write the representation ring of $\T$ over $\mathbb R$ as $\Rl
[\chi,\chi^{-1}]$.
Let $Q_n\colon\HH\otimes\HH_U\to\HH\otimes\HH_U$ be the projection onto the
$\chi^n$-homogeneous component
and let
$$
sf_n(v)=(\Tr_\phi\otimes\Tr)((v^*v-vv^*)Q_n(P\otimes1))\in\Rl.
$$
We show that $sf_n(v)=0$ for all but a finite number of $n\in\Z$ and
introduce the $\T$-equivariant spectral flow  $sf$ on $K^\T_0(M))$ with
values in $\Rl[\chi,\chi^{-1}]$ defined on the class $[v]$ of $v$ by
$$
sf([v])=\sum_{n\in\Z}sf_n(v)\chi^n.
$$
Let $K^I(M)$ be the subgroup of $K^\T_0(M)$ generated by partial
isometries whose homogeneous components are partial isometries.

\begin{thm}
If the spectral subspace assumption is satisfied, the equivariant
spectral flow coincides with the composition
$$
K^\T_0(M)\xrightarrow{-\Ind_{\hat\D}}K_0^\T(F)=K_0(F)[\chi,\chi^{-1}]
\xrightarrow{\tau_*}\Rl[\chi,\chi^{-1}],
$$
where $\tau_*$ denotes the homomorphism $K_0(F)\to\Rl$
defined by the trace $\tau$. For elements of $K^I(M)$ this was done explicitly in \cite{CPR1}.
\end{thm}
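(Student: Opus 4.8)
The plan is to evaluate both maps on a class $[v]\in K_0^\T(M)$, taking $v$ as in Lemma~\ref{lem:rep}, so that $v\in(\A^\sim\otimes B(\HH_U))^{\s\otimes\Ad U}$ is a partial isometry with $vv^*$ and $v^*v$ projections in $\F^\sim\otimes B(\HH_U)$ agreeing modulo $\F\otimes B(\HH_U)$. The observation driving the computation is that $vv^*$ and $v^*v$ lie in $F^\sim\otimes B(\HH_U)$, acting on the left of ${}_AX_F\otimes\HH_U$; since the left $F$-action commutes with $\D$ it commutes with $P\otimes1$, so $(vv^*)(P\otimes1)$ and $(v^*v)(P\otimes1)$ are $\T$-equivariant projections in $\End_F({}_AX_F\otimes\HH_U)$, and they differ by an element of the trace ideal $\F\otimes B(\HH_U)$ because $vv^*-v^*v\in\F\otimes B(\HH_U)$.

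The substantive step is to compute the Kasparov product defining $\Ind_{\hat\D}$ by unwinding the explicit model of $[\hat\D]\in KK_0^\T(M,F)$ built in the text from $({}_AX_F,\D)$ and $P=\chi_{[0,\infty)}(\D)$ following \cite{CPR1}. Here the spectral subspace assumption is used in an essential way: complementedness of each $F_k=\overline{A_kA_k^*}$ makes each spectral subspace $X_k=\overline{A_k}$ a complemented $C^*$-submodule, so that the spectral projections $\Phi_k$, the cut $P$, its amplification $P\otimes1$ and the isotypic projections $Q_n$ are all adjointable endomorphisms of the relevant $C^*$-modules, and the product can be computed module-theoretically rather than only inside $\cn$. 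Carrying this out, the product of $[v]$ with $[\hat\D]$ is represented by the formal difference of $\T$-equivariant module projections
$$
\Ind_{\hat\D}[v]=\big[(vv^*)(P\otimes1)\big]-\big[(v^*v)(P\otimes1)\big]\in K_0^\T(F),
$$
whose two terms agree modulo $\F\otimes B(\HH_U)$; a consistency check is that this vanishes when $v\in\F^\sim\otimes B(\HH_U)$, since there $v(P\otimes1)$ implements a $\T$-equivariant Murray--von Neumann equivalence between the two projections, in line with $\iota^*[\hat\D]=[\D]$ vanishing on $F$.

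It then remains to apply $\tau_*$ and decompose over the isotypic components of the $\T$-action. Identifying $K_0^\T(F)=K_0(F)[\chi,\chi^{-1}]$ and using that $\tau=\phi|_F$ agrees with the restriction of $\Tr_\phi$, the coefficient of $\chi^n$ in $\tau_*\Ind_{\hat\D}[v]$ equals $(\Tr_\phi\otimes\Tr)\big((vv^*-v^*v)Q_n(P\otimes1)\big)=-sf_n(v)$, so that $\tau_*\Ind_{\hat\D}[v]=-\sum_nsf_n(v)\chi^n=-sf([v])$, which is the claim. Since both maps are homomorphisms on $K_0^\T(M)$ and independent of the choice of representative, checking the identity on the representatives of Lemma~\ref{lem:rep} suffices; restricting to $K^I(M)$ recovers the explicit computation of \cite{CPR1}.

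The main obstacle is the second step: making the Kasparov product with $[\hat\D]$ genuinely explicit and fixing the signs. Since $\iota^*[\hat\D]=[\D]$ pins down $[\hat\D]$ only on the suspension $C_0(\Rl,A)$, one must verify that the $F$-direction of the mapping cone contributes nothing beyond the displayed projection difference, while tracking the auxiliary representation $U$ and the normalisations throughout; and it is precisely complementedness of the $F_k$ that keeps $P$ and the $X_k$ at the $C^*$-module level, without which the identity collapses to a purely analytic one. The remaining points — finiteness of $\{n:sf_n(v)\neq0\}$, which follows because $v$ has finitely many homogeneous components, and the routine additivity checks — follow from the earlier lemmas.
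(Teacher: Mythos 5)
Your strategy is right in outline — reduce to representatives from Lemma~\ref{lem:rep}, compute $\Ind_{\hat\D}[v]$ explicitly from the APS model of $[\hat\D]$, then take $\tau_*$ isotype by isotype — and the last step (that the coefficient of $\chi^n$ in $\tau_*$ of your projection-difference equals $-sf_n(v)$) is correct. But the central claim
$$
\Ind_{\hat\D}[v]=\big[(vv^*)(P\otimes1)\big]-\big[(v^*v)(P\otimes1)\big]
$$
is asserted rather than proved, and this is exactly where the content lies. The paper's actual intermediate step is Theorem~\ref{mainequivariantresult} (from \cite{CPR1}), which expresses $-\Ind_{\hat\D}[v]$ as the genuine Fredholm index of the Toeplitz operator $(P\otimes1)v(P\otimes1)$, i.e.\ as a difference of the \emph{finite} kernel and cokernel projections, not of the infinite projections $vv^*P$ and $v^*vP$. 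Passing from the Toeplitz index to your formal difference requires the Murray--von Neumann equivalence $v^*v(P\otimes1)\sim v(P\otimes1)v^*$ implemented by $v$, plus a compactness/trace-class estimate for $(vv^*-v^*v)(P\otimes1)$ isotype by isotype so the relative class is well-defined; none of this is supplied. You flag this honestly in your last paragraph as "the main obstacle," but that does not discharge it.

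There is a second, structural gap that affects both your argument and any argument that leans on Theorem~\ref{mainequivariantresult}: that theorem carries the extra hypothesis that the $\s\otimes\iota$-homogeneous components of $v$ are partial isometries, i.e.\ it covers precisely the subgroup $K^I(M)$, and the paper's own proof sketch says so explicitly ("at least on the elements represented by $\s\otimes\iota$ homogeneous isometries"; the theorem statement itself concedes "For elements of $K^I(M)$ this was done explicitly in \cite{CPR1}"). Your appeal to \cite{CPR1} therefore only covers $K^I(M)$, not all of $K^\T_0(M)$, unless you prove your projection-difference formula directly from the mapping cone Kasparov module without routing through the Toeplitz theorem. That direct computation — tracking the APS boundary condition, the extended $L^2$ summand $\Phi_0X$, the auxiliary representation $\HH_U$, and the sign — is what is missing. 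Until it is supplied, your proposal establishes the theorem only on $K^I(M)$, which is where the paper already stands.
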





It is natural to ask whether there is an analytic spectral flow formula
that computes the equivariant spectral flow. There is an immediate
obstacle, seen in examples: $(1+\D^2)^{-\frac{1}{2}}$ is almost never
finitely summable with respect to $\Tr_\phi$. To obtain a spectral
triple we use the method of \cite{CPR2}.  We construct on $\mathcal N$
a faithful semifinite normal weight $\phi_\D\equiv \mbox{Tr}_\phi
(e^{-\beta\D/2}\cdot e^{-\beta\D/2})$ such that

$\bullet$ the modular automorphism group $\sigma^{\phi_\D}$ of
$\phi_\D$ is implemented by a one parameter unitary group whose
generator is $\D$ and, moreover,
$\sigma_t^{\phi_\D}\vert_\A=\sigma_{-\beta t}$ for
 all $t\in\Rl$,

 \vspace{-4pt}

$\bullet$ $\phi_\D$ restricts to a faithful normal semifinite trace on
the fixed point algebra $\cM$ of $\sigma^{\phi_\D}$, $\D$ is affiliated
to $\cM$, and $[\D,a]$ extends to a bounded operator (in $\cn$) for all
$a$ in a dense subalgebra of $A$,

\vspace{-4pt}

$\bullet$  for all $f\in F\cap\mbox{dom}(\phi)$ and all $\lambda$ in
the resolvent set of $\D$, the operator $f(\lambda-\D)^{-1}$ belongs to
the ideal $\K(\cM,\phi_\D)$  of compact operators in $\cM$ relative to
$\phi_\D$.

The problem now is that since $A$ is not contained in $\cM$, we do not
have an immediate definition of a spectral flow for partial isometries
in $A$. This has led to the definition of a new group $K_1(A,\s)$
\cite{CPR2}, which is closely related to $K^I(M)$. Namely, a partial
isometry $v\in A$ (or in matrices over $A$) is called {\bf modular} if
$[\D,v]$ is bounded and $vQ v^*\in\cM=\cn^\s$ for every spectral
projection of~$\D$. There is a semigroup $K_1(A,\sigma)$ defined as the
homotopy classes of modular partial isometries in $\Mat_\infty
(A^\sim)=\cup_n \Mat_n(A^\sim)$. Via the Grothendieck construction we
will henceforth use the same notation for the corresponding group. The
main reason to define this group is that for modular partial isometries
$v$, $(Pvv^* ,vPv^* )$ is a Fredholm pair in the semifinite sense
\cite{BCPRSW} in $(\cM ,\phi_\D)$ and hence there is a well-defined
{\em analytic spectral flow} along the path $t\mapsto (1-t) (2Pvv^*-1) +t
(2vPv^*-1)$, $t\in [0,1]$. This is equal ({see \cite{BCPRSW}, Section
6) to the semifinite spectral flow $sf_{\phi_\D}(vv^*\D, v\D v^*)$
along the linear path joining $vv^* \D$ to~$v\D v^*$.

Let $v\in A$ be a modular partial isometry.  Then it can be shown that
the decomposition $v=\sum v_k , v_k \in A_k$ is finite and every $v_k$
is a partial isometry. We can consider $v_k$ as an operator
$\HH_\phi[k]\to\HH_\phi$, where $\HH_\phi[k]$ coincides with $\HH_\phi$
as a space, but the representation of $\T$ is tensored by $\chi^k$.
Then $v_k$ defines a class $\ll\!\! v_k\!\!\gg$ in $K_0^\T(M)$. There
is thus a well defined homomorphism $T\colon K_1 (A,\s ) \rightarrow
K^I(M) $ given by $ v\mapsto \sum_k \ll\!\! v_k\!\!\gg. $

In Section 4 we obtain the following relationship between the
equivariant spectral flow $sf$ with respect to $\Tr_\phi$ and
$sf_{\phi_\D}(vv^*\D, v\D v^*)$.
\begin{thm}
The spectral flow $sf_{\phi_\D}(vv^*\D, v\D v^*)$
for modular partial isometries is the composition of the maps
$$
K_1(A,\sigma)\xrightarrow{T} K^\T_0(M)
\xrightarrow{sf}\Rl[\chi,\chi^{-1}]\xrightarrow{\Ev(e^{-\beta})}\Rl,
$$
where $\Ev(e^{-\beta})$ is the evaluation at $\chi=e^{-\beta}$.
\end{thm}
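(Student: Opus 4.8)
The plan is to replace the analytic (semifinite) spectral flow by a trace formula and then to compute both sides of the asserted equality explicitly in terms of $\Tr_\phi$ and the $\s$-grading. \emph{Step 1: a trace formula for the left-hand side.} By the paragraph preceding the statement, $sf_{\phi_\D}(vv^*\D,v\D v^*)$ is the semifinite spectral flow in $(\cM,\phi_\D)$ of the linear path of symmetries joining $2Pvv^*-1$ to $2vPv^*-1$, and $(Pvv^*,vPv^*)$ is a $\phi_\D$-Fredholm pair of projections (taking $Q=1$ in the definition of modularity gives $vv^*\in\cM$, which forces $vv^*\in F$ since the $\s$-fixed elements of $A$ are exactly those of $F$; hence $P$ commutes with $vv^*$ and $Pvv^*$ is a genuine projection, and likewise for $vPv^*$). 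The standard formula for the spectral flow of a linear path between two symmetries whose associated projections differ by a $\phi_\D$-trace class operator (\cite{BCPRSW}, Section~6) then gives
\[
sf_{\phi_\D}(vv^*\D,v\D v^*)=\phi_\D(vPv^*-Pvv^*),
\]
with the sign pinned down by the one-dimensional model path $t\mapsto 2t-1$.

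\emph{Step 2: expanding the right-hand side of this formula.} Write $v=\sum_kv_k$ with $v_k\in A_k$; this is a finite sum, each $v_k$ is a modular partial isometry, and by the structure theory of modular partial isometries (equivalently, by the defining condition applied to the spectral projections $Q=\Phi_n$ of $\D$) one has $v_kv_l^*=v_k^*v_l=0$ for $k\ne l$. Consequently $vv^*=\sum_kv_kv_k^*\in F$, $Pvv^*=\sum_kv_kv_k^*P$, and $vPv^*=\sum_kv_kPv_k^*$ with $v_kPv_k^*=(vPv^*)(v_kv_k^*)\in\cM$. Since $v_k$ raises the $\D$-degree by $k$ one has $v_k\Phi_j=\Phi_{j+k}v_k$, hence $v_k\Phi_jv_k^*=v_kv_k^*\Phi_{j+k}$ and, by traciality of $\Tr_\phi$, $\Tr_\phi(v_k^*v_k\Phi_j)=\Tr_\phi(v_kv_k^*\Phi_{j+k})$; moreover $v_kv_k^*,v_k^*v_k\in F$ commute with $\D$. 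Combining these, $v_kPv_k^*-Pv_kv_k^*=\sum_{n\in\Z}([n\ge k]-[n\ge0])\,v_kv_k^*\Phi_n$ (Iverson brackets), whose coefficient vanishes for all but finitely many $n$, so this is a finite sum of pairwise orthogonal finite-$\Tr_\phi$-trace projections; applying $\phi_\D=\Tr_\phi(e^{-\beta\D/2}\,\cdot\,e^{-\beta\D/2})$ and $e^{-\beta\D}\Phi_n=e^{-\beta n}\Phi_n$ gives
\[
\phi_\D(vPv^*-Pvv^*)=\sum_k\sum_{n\in\Z}\bigl([n\ge k]-[n\ge0]\bigr)\,e^{-\beta n}\,\Tr_\phi(v_kv_k^*\Phi_n).
\]

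\emph{Step 3: unwinding the right-hand side of the theorem.} Since $T([v])=\sum_k\ll v_k\gg$ and $sf$ is additive, it suffices to evaluate $\Ev(e^{-\beta})\circ sf$ on each $\ll v_k\gg$. The class $\ll v_k\gg$ is represented by the $\s\otimes\Ad U$-invariant partial isometry $w=\left(\begin{smallmatrix}0&v_k\\0&0\end{smallmatrix}\right)$ on $\HH_\phi\oplus\HH_\phi[k]$, i.e.\ over $B(\HH_U)$ with $\HH_U=\C^2$ and $U=\chi^0\oplus\chi^k$ --- the unique choice of $U$ rendering the off-diagonal entry homogeneous of degree $0$. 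Here $w^*w-ww^*=\operatorname{diag}(-v_kv_k^*,\,v_k^*v_k)$, while $Q_n$ restricts to $\Phi_n$ on the first copy of $\HH_\phi$ and to $\Phi_{n-k}$ on the second, and $P\otimes1$ is $P$ on each copy. Feeding this into $sf_n(w)=(\Tr_\phi\otimes\Tr)((w^*w-ww^*)Q_n(P\otimes1))$ and using $\Phi_mP=[m\ge0]\Phi_m$ together with the identity $\Tr_\phi(v_k^*v_k\Phi_{n-k})=\Tr_\phi(v_kv_k^*\Phi_n)$ of Step 2 gives $sf_n(\ll v_k\gg)=\bigl([n\ge k]-[n\ge0]\bigr)\Tr_\phi(v_kv_k^*\Phi_n)$. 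Therefore $\Ev(e^{-\beta})(sf(\ll v_k\gg))=\sum_nsf_n(\ll v_k\gg)e^{-\beta n}=\sum_{n\in\Z}\bigl([n\ge k]-[n\ge0]\bigr)e^{-\beta n}\Tr_\phi(v_kv_k^*\Phi_n)$, and summing over $k$ reproduces the right-hand side of the displayed formula in Step 2 exactly.

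The real content is Step 1: proving $sf_{\phi_\D}(vv^*\D,v\D v^*)=\phi_\D(vPv^*-Pvv^*)$ with the correct sign and under the correct Fredholm/trace-class hypotheses, which is where the modularity of $v$ and the summability built into the construction of $\phi_\D$ are genuinely used and where the semifinite spectral-flow machinery of \cite{BCPRSW} must be invoked with care. Steps 2 and 3 are essentially bookkeeping with the $\s$-grading; the one point requiring attention is that the partial sums $\sum_{n\ge0}$ and $\sum_{n\ge k}$ need not converge separately, so one cancels their common tail termwise using $\Tr_\phi(v_k^*v_k\Phi_{n-k})=\Tr_\phi(v_kv_k^*\Phi_n)$ --- legitimate precisely because $sf(\ll v_k\gg)$ is a Laurent polynomial and $vPv^*-Pvv^*$ is $\phi_\D$-trace class.
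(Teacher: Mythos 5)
Your proof is correct and follows essentially the same route as the paper: your Steps~1--2 reproduce the content of Lemma~\ref{lemma:msf} (establishing that $P_1-P_2$ is $\phi_\D$-trace class and writing $sf_{\phi_\D}(vv^*\D,v\D v^*)=\phi_\D(vPv^*-Pvv^*)=\sum_k\sum_n([n\ge k]-[n\ge 0])e^{-\beta n}\Tr_\phi(v_kv_k^*\Phi_n)$), and your Step~3 is the paper's unwinding of $\Ev(e^{-\beta})\circ sf\circ T$ on each homogeneous component $\ll v_k\gg$. The only cosmetic difference is that you package the trace identity into a single displayed formula via Iverson brackets rather than splitting into $k>0$ and $k<0$ cases.
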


In Section 5 we turn to the question of providing a direct analytic formula for
$sf_{\phi_\D}(vv^*\D, v\D v^*)$.

\begin{thm}
Let $v\in\A$ be a modular partial isometry. Then
$sf_{\phi_{\D}}(vv^*\D,v\D v^*)$ is given by
$$\Res_{r=1/2}\left(r\mapsto \phi_\D(v[\D,v^*](1+\D^2)^{-r})+
\frac{1}{2}\int_1^\infty \phi_{\D}((\s_{-i\beta}(v^*)v-vv^*)\D
(1+s\D^2)^{-r})s^{-1/2}ds\right).
$$
\end{thm}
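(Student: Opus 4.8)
The plan is to compute $sf_{\phi_\D}(vv^*\D, v\D v^*)$ via the semifinite spectral flow formula of \cite{BCPRSW}, applied in the semifinite von Neumann algebra $(\cM,\phi_\D)$ to the linear path $t\mapsto \D_t:=(1-t)vv^*\D+tv\D v^*$ joining the two self-adjoint Fredholm (in the semifinite sense) operators. Writing $\dot\D_t = v\D v^*-vv^*\D$, the integral formula for spectral flow along a path of operators with common domain gives
$$
sf_{\phi_\D}(vv^*\D,v\D v^*) = \frac{1}{C_r}\int_0^1 \phi_\D\!\left(\dot\D_t(1+\D_t^2)^{-r}\right)dt + \text{(endpoint correction terms)},
$$
valid for $\Re(r)>1/2$, together with a residue at $r=1/2$ to recover the spectral flow itself; the normalising constant satisfies $C_r\to 1$ as $r\to 1/2$ since the residue kills it. The first step is therefore to make this formula precise using the compactness of $f(\lambda-\D)^{-1}$ in $\K(\cM,\phi_\D)$ (the third bullet property of $\phi_\D$) to verify the hypotheses of the $\theta$-summable / resolvent version of the spectral flow formula, and to identify the endpoint corrections with the $\phi_\D$-trace of the relevant projections — these produce no pole at $r=1/2$ and hence drop out under $\Res_{r=1/2}$.

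Second, I would analyse the two endpoint operators. Since $vv^*$ and $v^*v$ commute with $\D$ modulo $\F\otimes B(\HH_U)$ and $v$ is a modular partial isometry, $vv^*\D = \D vv^*$ on the relevant domain and $v\D v^*$ is self-adjoint affiliated to $\cM$; one checks $(v\D v^*)^2 = v\D^2 v^*$ using $v^*v\D = \D v^*v$ in the appropriate sense (this is where the finiteness of the decomposition $v=\sum v_k$ and the fact that each $v_k$ is a partial isometry, quoted in the excerpt, is used). Consequently $(1+\D_t^2)^{-r}$ can be expanded and the integrand $\phi_\D(\dot\D_t(1+\D_t^2)^{-r})$ rewritten. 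The key algebraic manipulation is to split $\dot\D_t = v\D v^* - vv^*\D$ and handle the $v\D v^*$ piece by the KMS/trace property: $\phi_\D$ restricted to $\cM$ is a trace, so $\phi_\D(v\D v^* \cdot X) = \phi_\D(\D v^* X v)$ whenever $X\in\cM$, and the modular relation $\sigma^{\phi_\D}_{t}|_\A = \sigma_{-\beta t}$ converts $v^*(\cdot)v$-conjugations into insertions of $\sigma_{-i\beta}(v^*)v$. This is precisely the mechanism that turns the naive spectral flow integrand into the sum of the two terms in the statement: the term $\phi_\D(v[\D,v^*](1+\D^2)^{-r})$ comes from the "diagonal" contribution where the endpoints agree with $\D$ up to a commutator, and the eta-type integral $\frac12\int_1^\infty \phi_\D((\sigma_{-i\beta}(v^*)v - vv^*)\D(1+s\D^2)^{-r})s^{-1/2}\,ds$ is the defect measuring the failure of $v$ to intertwine $\D$ exactly, after the change of variables $t\mapsto s$ inside the resolvent that reorganises the path integral over $t\in[0,1]$ into a scale integral over $s\in[1,\infty)$.

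Third, I would carry out that change of variables carefully: along the linear path the resolvent $(1+\D_t^2)^{-r}$ interpolates between $(1+\D^2)^{-r}$ and $(1+v\D^2v^*)^{-r}$, and parametrising by $s$ (essentially $s$ = a function of $t$ arising from $\D_t^2 = \D^2 + (\text{rank-one-in-}k\text{ perturbation})$) produces the $s^{-1/2}$ weight and the lower limit $s=1$. The residue at $r=1/2$ then picks out exactly the logarithmically divergent part of each term; the first term's residue is finite because $[\D,v^*]$ is bounded and $v(1+\D^2)^{-r}$ is trace-class for $\Re r$ large with a simple pole at $r=1/2$ by the compactness hypothesis, and the second term's residue is finite by the same hypothesis applied to $\D(1+s\D^2)^{-r}$ together with absolute convergence of the $s$-integral for $\Re r > 1/2$ followed by analytic continuation.

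The main obstacle I expect is \textbf{the change of variables and the bookkeeping of endpoint corrections}: justifying that the linear-path spectral flow integral equals (a normalising constant times) $\Res_{r=1/2}$ of the displayed expression requires (i) controlling the non-commutativity of $vv^*\D$ and $v\D v^*$ carefully enough to expand $(1+\D_t^2)^{-r}$, using that the perturbation lives in $\F\otimes B(\HH_U)$ and is thus $\phi_\D$-compact, and (ii) showing that the boundary terms in the spectral flow formula (the $\phi_\D$-dimensions of the kernels/positive spectral projections of the endpoints) are holomorphic at $r=1/2$ and therefore annihilated by the residue. Both are analytic rather than conceptual difficulties, but they are where the $\theta$-summable-type estimates and the precise form of the $\phi_\D$-compactness assumption on resolvents genuinely enter. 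Once these are in place, the trace property of $\phi_\D|_\cM$ and the modular relation $\sigma^{\phi_\D}_t|_\A=\sigma_{-\beta t}$ do the rest essentially formally, yielding the two-term "twisted residue cocycle" with its eta part.
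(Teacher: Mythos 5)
Your outline lands in the right territory — the paper does compute $sf_{\phi_\D}(vv^*\D,v\D v^*)$ via the residue form of the semifinite spectral flow formula (Proposition~\ref{residuespecflow}) applied along the same linear path $\D_t = p\D + tv[\D,v^*]$ — but two of your key explanatory claims misrepresent how the formula actually works, and a third step you elide is load-bearing.

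First, and most seriously, the $s$-integral does \emph{not} arise from ``a change of variables $t\mapsto s$ inside the resolvent that reorganises the path integral over $t\in[0,1]$ into a scale integral over $s\in[1,\infty)$.'' In the residue spectral flow formula the $t$-integral and the $s$-integral are independent ingredients: the $t$-integral is the Laplace-transformed Carey--Phillips one-form and produces the term $\phi_\D(v[\D,v^*](1+\D^2)^{-r})$ (after replacing $(1+\D_t^2)^{-r}$ by $(1+\D^2)^{-r}$, a step controlled by \cite[Prop.\ 10, App.\ B]{CP1} and contributing only a function holomorphic at $r=1/2$); the $s$-integral is the Laplace transform of the $\theta$-summable truncated eta invariant, $\eta_\D(r)=\int_1^\infty\phi_\D(\D(1+s\D^2)^{-r})s^{-1/2}ds$, where $s$ is the heat-kernel scale parameter. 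The second term of the theorem is precisely $\tfrac12(\eta_{\D_1}(r)-\eta_{\D_0}(r))$, rewritten via the trace property of $\phi_\D$ on $\cM$ and the KMS relation to produce the $\sigma_{-i\beta}(v^*)v - vv^*$ insertion. No reparametrisation of the path is involved, and attempting one would not produce the $s^{-1/2}$ weight or the lower limit $s=1$.

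Second, your claim that the endpoint (kernel) corrections ``produce no pole at $r=1/2$ and hence drop out under $\Res_{r=1/2}$'' is the wrong reason and, taken literally, false. In Proposition~\ref{residuespecflow} the kernel terms $\tfrac12(\phi_\D(P_{\ker\D_b})-\phi_\D(P_{\ker\D_a}))$ sit \emph{outside} the residue; they are $r$-independent constants and would survive. The paper must prove separately (Lemma~\ref{lem:no-ker}) that $\phi_\D(P_{\ker\D_0})=\phi_\D(P_{\ker\D_1})$, which again uses the KMS identity $\phi(\sigma_{-i\beta}(v^*)v)=\phi(vv^*)$. Third, you do not carry out the reduction to the corner $p\cM p$ with $p=vv^*-v_0v_0^*$ (Lemma~\ref{lm:local-unit}); this is not cosmetic, because $(1+\D^2)^{-1/2}$ is generally not in $\LL^{(1,\infty)}(\cM,\phi_\D)$ — only $f(1+\D^2)^{-1/2}$ for $f\in\F$ is — so the summability hypothesis of the residue formula fails without cutting down by $p$. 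Filling these three gaps essentially reconstructs the paper's argument; the surrounding heuristics in your proposal (compactness of resolvents entering, the KMS property producing the twist) are correct in spirit but need to be attached to the right pieces of the formula.
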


In previous papers \cite{CPR2,CRT} it was clear from the numerical
values computed for the spectral flow for particular choices of modular
partial isometries $v$ that the mapping cone $K$-theory was playing a
role. The preceding two theorems explain exactly these previously
somewhat mysterious numerical values.

The above spectral flow formula turns out to be related to twisted
cyclic cohomology as follows. Ignoring eta correction terms one can argue by analogy with
the standard case of tracial weights (see \cite{CPS2})
to define a $\T$-equivariant Chern character computing the equivariant
spectral flow using a Dixmier functional $\Tr_{\phi,\omega}$ by
$$
\phi_1(g;a_0,a_1)=\frac{1}{2}\Tr_{\phi,\omega}(ga_0[\D,a_1]),
$$
where $g$ is an element of a group algebra of $\T$. In Section 5 we can make the definition of 
$\Tr_{\phi,\omega}$ precise using a residue formula and also define a twisted cocycle $\phi_1$
where we choose $g=e^{-\beta}$ considered as an
element of the complexification of $\T$. Namely, we have:
\begin{thm}
(i)  If $(A,\phi,\s)$ has full spectral subspaces then for all
$a_0,a_1,\in\A$ the residue
$$\phi_1(a_0,a_1):=\Res_{r=1/2}\phi_\D(a_0[\D,a_1](1+\D^2)^{-r})$$
exists and equals $\phi(a_0[\D,a_1])$. It defines a twisted cyclic
cocycle on $\A$ with twisting $\s_{-i\beta}$, and for any modular
partial isometry $v$
$$sf_{\phi_\D}(vv^*,v\D v^*)=\phi_1(v,v^*).$$

(ii) For general circle actions, the bilinear functional on $\A$ given
by
$$
\psi^r (a_0,a_1 )=
\phi_\D(a_0[\D,a_1](1+\D^2)^{-r})+\frac{1}{2}
\int_1^\infty
\phi_{\D}((\s_{-i\beta}(a_1)a_0-a_0a_1)\D(1+s\D^2)^{-r})s^{-1/2}ds
$$
depends holomorphically on $r$ for $\Re(r)>1/2$ and modulo functions
which are holomorphic for $\Re(r)>0$ is a function valued
$\s_{-i\beta}$-twisted $(b,B)$-cocycle.
\end{thm}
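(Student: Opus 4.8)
The plan is to reduce every occurrence of $\phi_\D$ to a finite sum over the spectral subspaces $\Phi_k$ of $\D$, using two structural facts. First, $\phi_\D$ annihilates any $\sigma$-homogeneous element of nonzero degree, because $\Tr_\phi$ is a trace on $\cn$ and the unitaries $e^{it\D}$ implementing the grading lie in $\cn$; hence for $x$ with $\sigma$-decomposition $x=\sum_m x_m$ and a function $g$ one has $\phi_\D(x\,g(\D))=\phi_\D(x_0\,g(\D))=\sum_k g(k)\,\phi_\D(x_0\Phi_k)$ with $x_0\in\F$. Second, the $k$-dependence of the coefficients $\phi_\D(x_0\Phi_k)$ is controlled by Lemma~\ref{spatial trace}; when the spectral subspaces are full it gives $\phi_\D(x_0\Phi_k)=\tau(x_0)$ for every $k\in\Z$, and in general the uniform bound $|\phi_\D(x_0\Phi_k)|\le\tau(|x_0|)$. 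For part (i): $[\D,a_1]=\sum_j j(a_1)_j$, so only the degree-zero part $\sum_j j(a_0)_{-j}(a_1)_j$ contributes and $\phi_\D\bigl(a_0[\D,a_1](1+\D^2)^{-r}\bigr)=\bigl(\sum_j j\,\tau((a_0)_{-j}(a_1)_j)\bigr)\sum_{k\in\Z}(1+k^2)^{-r}$, where $r\mapsto\sum_{k\in\Z}(1+k^2)^{-r}$ has a simple pole at $r=\tfrac12$ of residue $1$ (compare with $\int_{\mathbb R}(1+x^2)^{-r}dx=\sqrt\pi\,\Gamma(r-\tfrac12)/\Gamma(r)$, the difference being holomorphic there). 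Since $\phi|_F=\tau$ and $\phi$ kills $\sigma$-homogeneous elements of nonzero degree, $\sum_j j\,\tau((a_0)_{-j}(a_1)_j)=\phi(a_0[\D,a_1])$, which is the asserted residue. That $\phi_1(a_0,a_1)=\phi(a_0[\D,a_1])$ is a $\sigma_{-i\beta}$-twisted cyclic cocycle is then a short algebraic check: using $[\D,a_1a_2]=[\D,a_1]a_2+a_1[\D,a_2]$ the twisted coboundary reduces to $-\phi(a_0[\D,a_1]a_2)+\phi(\sigma_{-i\beta}(a_2)a_0[\D,a_1])$, which vanishes because the KMS$_\beta$ condition gives $\phi(\sigma_{-i\beta}(a_2)y)=\phi(ya_2)$; twisted cyclicity follows from $\phi([\D,a_0a_1])=0$ and the same identity. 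Finally, the spectral-flow formula of the previous theorem expresses $sf_{\phi_\D}(vv^*\D,v\D v^*)$ as $\Res_{r=1/2}$ of $\phi_\D(v[\D,v^*](1+\D^2)^{-r})$ plus the eta integral; in the full-spectral-subspace case that integrand equals $\tau(Y_0)\sum_{k\in\Z}k(1+sk^2)^{-r}$ with $Y_0$ the degree-zero part of $\sigma_{-i\beta}(v^*)v-vv^*$, and this vanishes by antisymmetry $k\mapsto-k$, so $sf_{\phi_\D}(vv^*\D,v\D v^*)=\Res_{r=1/2}\phi_\D(v[\D,v^*](1+\D^2)^{-r})=\phi_1(v,v^*)$.

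For part (ii), the first summand $\phi_\D(a_0[\D,a_1](1+\D^2)^{-r})=\sum_j j\sum_k(1+k^2)^{-r}\phi_\D((a_0)_{-j}(a_1)_j\Phi_k)$ is a finite sum of series whose terms are bounded by $(1+k^2)^{-\Re r}\,\tau(|(a_0)_{-j}(a_1)_j|)$, hence it converges locally uniformly and is holomorphic for $\Re r>\tfrac12$. The eta summand is the genuine technical point, and I expect it to be the main obstacle. The same bound makes it absolutely convergent and holomorphic for $\Re r>1$; to continue it to $\Re r>\tfrac12$ one rewrites, via the substitution $u=\sqrt s\,|k|$, the inner integral $\int_1^\infty k(1+sk^2)^{-r}s^{-1/2}ds$ as $2\,\operatorname{sgn}(k)\int_{|k|}^\infty(1+u^2)^{-r}du$, so the eta term becomes $\sum_k\operatorname{sgn}(k)\,\phi_\D(Y_0\Phi_k)\int_{|k|}^\infty(1+u^2)^{-r}du$ with $Y_0$ the degree-zero part of $\sigma_{-i\beta}(a_1)a_0-a_0a_1$. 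The KMS condition forces $\tau(Y_0)=0$ (each matched pair $\sigma_{-i\beta}((a_1)_j)(a_0)_{-j}$, $(a_0)_{-j}(a_1)_j$ has the same trace), and Lemma~\ref{spatial trace} controls how $\phi_\D(Y_0\Phi_k)$ behaves as $|k|\to\infty$ --- in the examples of interest it is supported on finitely many $k$ --- so that this series, each term $\int_{|k|}^\infty(1+u^2)^{-r}du$ being holomorphic for $\Re r>\tfrac12$ with a simple pole of residue $\tfrac12$ at $r=\tfrac12$, sums to a function holomorphic for $\Re r>\tfrac12$. Hence $\psi^r$ is holomorphic there.

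It remains to verify the $\sigma_{-i\beta}$-twisted $(b,B)$-cocycle identities modulo functions holomorphic for $\Re r>0$. The twisted $B$-operator annihilates $\psi^r$ outright: $\psi^r(a_0,1)=0$ since $[\D,1]=0$ and $\sigma_{-i\beta}(1)=1$, and $\psi^r(1,a_0)=0$ because $[\D,a_0]$ and $\sigma_{-i\beta}(a_0)-a_0$ have zero degree-zero component. In the twisted Hochschild coboundary $b_{\sigma_{-i\beta}}\psi^r$ the eta contribution vanishes identically before $\phi_\D$ is applied, since the three arguments $\sigma_{-i\beta}(a_2)a_0a_1-a_0a_1a_2$, $-(\sigma_{-i\beta}(a_1)\sigma_{-i\beta}(a_2)a_0-a_0a_1a_2)$ and $\sigma_{-i\beta}(a_1)\sigma_{-i\beta}(a_2)a_0-\sigma_{-i\beta}(a_2)a_0a_1$ add to zero. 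The first-summand contribution collapses by the Leibniz rule to $-\phi_\D(a_0[\D,a_1]a_2(1+\D^2)^{-r})+\phi_\D(\sigma_{-i\beta}(a_2)a_0[\D,a_1](1+\D^2)^{-r})$; passing to $\sigma$-homogeneous components, using that $\phi_\D$ satisfies $\phi_\D(xy)=\phi_\D(\sigma_{-i\beta}(y)x)$ on $\A$ (because $\sigma^{\phi_\D}_i$ restricts to $\sigma_{-i\beta}$ there) and that $\sigma^{\phi_\D}$ fixes functions of $\D$, one moves $a_2$ to the front and is left with $-\phi_\D\bigl(\sigma_{-i\beta}(a_2)\,[(1+\D^2)^{-r},\,a_0[\D,a_1]]\bigr)$. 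The commutator of $(1+\D^2)^{-r}$ with a $\sigma$-homogeneous element of degree $m$ acts on the $k$-th spectral subspace as multiplication by $(1+(k+m)^2)^{-r}-(1+k^2)^{-r}=O(|k|^{-1-2r})$, so by the spectral-subspace estimate this error is a sum $\sum_k O(|k|^{-1-2\Re r})$ and is holomorphic for $\Re r>0$. Thus $b_{\sigma_{-i\beta}}\psi^r$ is holomorphic for $\Re r>0$ while $B_{\sigma_{-i\beta}}\psi^r=0$; taking $\Res_{r=1/2}$ then yields the genuine twisted residue cocycle, with its eta summand, computing the equivariant spectral flow.
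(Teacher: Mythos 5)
Part (i) is essentially correct and follows the paper's route: reduce to the scalar series $\sum_k(1+k^2)^{-r}$ under full spectral subspaces, identify the residue with $\phi(a_0[\D,a_1])$, and verify the twisted cyclicity via the KMS condition; the vanishing of the eta term is handled by $\tau(Y_0)=0$ (the paper's argument) or equivalently by the odd symmetry of the $k$-sum. Your observation that $b^\s\eta^r_\D$ vanishes because the three arguments of $\phi_\D$ cancel algebraically is a clean reformulation of the paper's observation that $\eta^r_\D=b^\s\theta^r_\D$ is already a coboundary. The commutator estimate $(1+(k+m)^2)^{-r}-(1+k^2)^{-r}=O(|k|^{-1-2\Re r})$ for the holomorphy of $b^\s\psi^r$ on $\Re(r)>0$ matches the paper.

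There is however a genuine gap in part (ii), precisely at the spot you yourself flag as the main obstacle: the claim that $\eta^r_\D(a_0,a_1)$ continues holomorphically from $\Re(r)>1$ to $\Re(r)>1/2$. After the substitution $u=\sqrt{s}\,|k|$, the eta term becomes $\sum_k 2\operatorname{sgn}(k)\,\phi_\D(Y_0\Phi_k)\int_{|k|}^\infty(1+u^2)^{-r}\,du$, and the coefficients $\phi_\D(Y_0\Phi_k)$ are, by Lemma~\ref{lemma:not-too-big}, merely \emph{bounded}. Since $\int_{|k|}^\infty(1+u^2)^{-r}\,du = O(|k|^{1-2\Re r})$, this estimate alone only gives absolute convergence for $\Re(r)>1$. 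Your fallback --- ``in the examples of interest it is supported on finitely many $k$'' --- is true for modular partial isometries but false for general $a_0,a_1\in\A$, so it cannot prove the stated holomorphy on $\Re(r)>1/2$. Also, $\tau(Y_0)=0$ is a single scalar constraint and does not control the whole sequence $\phi_\D(Y_0\Phi_k)$.

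The missing ingredient is the difference structure that the twisted-trace property of $\phi_\D$ forces on those coefficients. For $a_0\in A_{-m}$, $a_1\in A_m$, set $s_n=\phi_\D(a_0a_1\Phi_n)$. Then $\phi_\D(\s_{-i\beta}(a_1)a_0\Phi_n)=\phi_\D(a_0\Phi_n a_1)=\phi_\D(a_0a_1\Phi_{n-m})=s_{n-m}$, so $\phi_\D(Y_0\Phi_n)=s_{n-m}-s_n$. Feeding this into your rewritten eta sum and performing an Abel summation (shifting the index in the $s_{n-m}$ part) converts the infinite series into finitely many boundary terms plus series whose $n$-th term involves $\int_n^{n+m}(1+t^2)^{-r}\,dt = O(n^{-2\Re r})$ rather than $\int_n^\infty = O(n^{1-2\Re r})$. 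The faster decay of the \emph{local} integral gives locally uniform convergence on $\Re(r)>1/2$, which is exactly where the extra half power of decay matters. Without this telescoping step the proposal does not establish holomorphy of $\psi^r$ beyond $\Re(r)>1$.
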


We have encountered a similar situation in~\cite{NT}. There the
$SU_q(2)$-equivariant Chern character of the Dirac operator on the
quantum sphere was evaluated at a special element $\rho\in
U_q({\mathfrak su}_2)$ and produced an explicitly computable twisted
cyclic cocycle. In our current situation the evaluation at
$g=e^{-\beta}$ is needed to improve summability whereas there the purpose was
somewhat opposite: the evaluation at~$\rho$ prevented the dimension
drop and produced a $2^+$-summable spectral triple in the twisted sense
instead of a $0^+$-summable one. In both cases however we see that
twisted cohomology appears as an analytically manageable part of
an equivariant cohomology.

We make the observation that our constructions and resulting index theorems are 
not related to the approach of Connes and Moscovici in \cite{CM}.

\section{The construction of equivariant $KK$ classes from a circle action}
\label{sec:KK-kms}

\subsection {A Kasparov module from a circle action} \label{ss:KK}
Let $A$ be a $C^*$-algebra, $\sigma\colon\T\to\Aut(A)$ a
strongly continuous action of the circle. It will
be convenient to consider $\sigma$ as a $2\pi$-periodic
one-parameter group of automorphisms.
We denote by $F$ the fixed point algebra $\{a\in A\mid
\sigma_t(a)=a\ \ \forall t\in \Rl\}$. Since $\T$ is a compact group,
the map
$$\Phi\colon A\to F,\qquad \Phi(a)=\frac{1}{2\pi}\int^{2\pi}_0\sigma_t(a)dt$$
is a faithful conditional expectation. Next define an
$F$-valued inner product on $A$ by
$(a|b)_R:=\Phi(a^*b).$
The properties of $\Phi$ allow us to see that this is a
(pre)-$C^*$-inner product on $A$, and so we may complete~$A$ in the
topology determined by the norm
$\Vert a\Vert_X^2=\Vert(a|a)_R\Vert_F$
to obtain a $C^*$-module, for the right action of $F$.

\begin{defn}\label{Fmod}
We let $X=\overline{A}$ be the $C^*$-module completion of $A$ with
inner product $(\cdot|\cdot)_R$.
\end{defn}

The circle action is defined on the dense subspace $A\subset X$ and
extends to a unitary action on $X$. The $F$-module $X$
is a full $F$-module for the right 
inner product.
For $k\in\Z$, denote the eigenspaces of the action $\s$ by
$$ A_k=\{a\in A: \sigma_t(a)=e^{ikt}a\ \ \mbox{for all}\ t\in \Rl\}.$$
Then $F=A_0$, which guarantees the fullness of $X$ 
over $F$. Also
$A$ is a $\Z$-graded algebra in an obvious way,
$A_{-k}=(A_k)^*$ and in particular, each~$A_k$ is an $F$-module.
Note that the norm on $A_k$ defined by the above inner product
coincides with the $C^*$-norm.  We denote by $X_k$ 
the space $A_k$ considered as a closed
submodule of~$X$.
For  $k\in \Z$ we set $F_k=\overline{A_kA_k^*}$. Some of our results
using $KK$-theoretic constructions require the following assumption.

\begin{defn} The action $\sigma$ on
$A$ satisfies the {\bf Spectral Subspace Assumption} (SSA) if $F_k$ is a
complemented ideal in $F$ for every $k\in\Z$. Equivalently, the
representation $\pi_k\colon F\to \End_F(X_k)$ given by left
multiplication satisfies $\pi_k(F)=\pi_k(F_k)$ (then $\ker\pi_k$ is the
complementary ideal to $F_k$).
\end{defn}

There is a special case of this assumption which is well known, namely
$A$ is said to have {\bf full spectral subspaces} if $F_k=F$ for all
$k\in\Z$. The gauge action on the Cuntz algebras $\mathcal O_n$
provides examples where fullness holds. The quantum group $SU_q(2)$
with its Haar state and associated circle action is an example of an
algebra satisfying the SSA but not having full spectral
subspaces~\cite{CRT}.
\begin{lemma}\label{lem:left-full} If  $\overline{A_1A_1^*}=
\overline{A_1^*A_1}=A_0$, the modules $X_k$ and $\overline{X}_k$
are full for all $k\in\Z$.
\end{lemma}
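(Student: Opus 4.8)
The statement asserts that under the condition $\overline{A_1A_1^*}=\overline{A_1^*A_1}=A_0=F$, each $X_k$ and each conjugate module $\overline{X}_k$ is a full right $F$-module. The plan is to reduce everything to the $k=1$ hypothesis by an induction on $|k|$, using the multiplicative structure $A_kA_l\subseteq A_{k+l}$ together with the identification of inner products with the ambient $C^*$-norm.

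First I would record the two base cases. For $k=0$, $X_0=F$ and $\overline{A_0A_0^*}=\overline{FF^*}=F$, so $X_0$ is trivially full (and $\overline{X}_0=X_0$). For $k=1$, the right-fullness of $X_1$ is exactly the statement $\overline{(X_1|X_1)_R}=\overline{\Phi(A_1^*A_1)}=\overline{A_1^*A_1}=F$ — note that on $A_1^*A_1\subseteq A_0$ the conditional expectation $\Phi$ is the identity — which is the second half of the hypothesis. Similarly $\overline{X}_1$ is the module $A_1^*$ with right $F$-action and inner product $(a^*|b^*)=\Phi(ab^*)$, so its fullness is $\overline{A_1A_1^*}=F$, the first half of the hypothesis.

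Next I would run the induction. Suppose $X_k$ is full for some $k\geq 1$, i.e. $\overline{A_k^*A_k}=F$. I want $\overline{A_{k+1}^*A_{k+1}}=F$. The key algebraic fact is $A_{k+1}=\overline{A_1A_k}$: indeed $\overline{A_1A_k}\subseteq A_{k+1}$, and for the reverse inclusion one uses that $\overline{A_1^*A_1}=F$ gives, for any $b\in A_{k+1}$, an approximation of $b$ by elements of $\overline{A_1A_1^*A_{k+1}}\subseteq\overline{A_1A_k}$ since $A_1^*A_{k+1}\subseteq A_k$. Granting this, $\overline{A_{k+1}^*A_{k+1}}=\overline{A_k^*A_1^*A_1A_k}=\overline{A_k^*FA_k}\supseteq\overline{A_k^*A_k}=F$ (the middle equality again using $\overline{A_1^*A_1}=F$ and a standard approximate-unit argument to absorb $F$), completing the induction upward. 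The case $k\leq -1$ follows by applying the same argument with the roles of $A_1$ and $A_1^*=A_{-1}$ exchanged, which is legitimate precisely because the hypothesis is symmetric in $A_1A_1^*$ and $A_1^*A_1$. Finally, fullness of $\overline{X}_k$ is fullness of $X_{-k}$ up to the involution identification $\overline{X}_k\cong X_{-k}$ as right $F$-modules, so it follows from the $X_j$ statement for all $j$.

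The main obstacle is the identity $A_{k+1}=\overline{A_1A_k}$ (and its mirror $A_{k-1}=\overline{A_1^*A_k}$), which is the genuine content and is where both halves of the hypothesis get used; everything else is bookkeeping with approximate units and the fact that $\|\cdot\|_X=\|\cdot\|_A$ on homogeneous subspaces. I would prove it carefully by writing, for $b\in A_{k+1}$ and an approximate unit $(u_\lambda)$ for $F$ built (using $\overline{A_1^*A_1}=F$) from finite sums $\sum_i a_i^*a_i$ with $a_i\in A_1$, that $bu_\lambda\to b$ and $bu_\lambda=\sum_i b a_i^* \cdot a_i$ with $ba_i^*\in A_{k+1}A_{-1}\subseteq A_k$ — so $bu_\lambda\in A_1A_k$ — wait, one needs $a_i^*b$ type terms on the correct side; the cleanest route is to use the approximate unit on the \emph{left}, i.e. from $\overline{A_1A_1^*}=F$ write $u_\lambda=\sum_i a_i a_i^*$ with $a_i\in A_1$, so $u_\lambda b\to b$ and $u_\lambda b=\sum_i a_i(a_i^*b)$ with $a_i^*b\in A_{-1}A_{k+1}\subseteq A_k$, giving $u_\lambda b\in A_1A_k$ and hence $b\in\overline{A_1A_k}$. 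This is the step to get right; the rest is routine.
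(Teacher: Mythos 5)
Your proof is correct, and it is essentially the same approach as the paper's: an induction on $|k|$ that builds $A_0$ out of products involving $A_1$ and $A_1^*$, using the symmetry of the hypothesis to handle $k\le -1$. The only substantive difference is that the paper's inductive step avoids establishing the full equality $A_{k+1}=\overline{A_1A_k}$: it instead proves $A_0=\overline{(A_1^k)^*A_1^k}$ (using only the elementary observation $A_1=\overline{A_0A_1}$) and then invokes the trivial inclusion $A_1^k\subseteq A_k$ to get $\overline{A_k^*A_k}\supseteq A_0$, which spares the approximate-unit bookkeeping you do in the last paragraph. Since $\overline{A_k^*A_k}\subseteq A_0$ holds automatically, an inclusion suffices — as you yourself note, the inclusion $A_1A_k\subseteq A_{k+1}$ would actually have been enough in your chain too, so the equality $A_{k+1}=\overline{A_1A_k}$ is harmless but unnecessary work.
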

\begin{proof}
Observe that as $A_1A_1^*\subset A_0$, we have
$A_1=\overline{A_0A_1}$. So if $A_0=\overline{A_1^*A_1}$, by
induction we get $A_0=\overline{(A_1^k)^*A_1^k}$ for $k\ge1$. Since
$A^k_1\subset A_k$, we conclude that $X_k$ is full. Similarly, if
$k\le-1$ then $(A_1^*)^{-k}\subset A_k$, so
$A_0=\overline{A_1A_1^*}$ implies that $X_k$ is full.
\end{proof}
Note that as $A_0A_k$ is dense in $A_k$, we always have
$\overline{FA}=A$ and similarly $\overline{AF}=A$ (this also follows
from the existence of a $\sigma$-invariant approximate unit in $A$). As
there are many examples of circle actions which are not full but
satisfy the SSA we will develop the theory in this generality in
the present Section.

Next we remark that
the general theory of $C^*$-modules (or Hilbert modules) is
discussed in many places and we will use \cite{L,RW}. For a right
$C^*$-$B$-module $Y$, we let $\End_B(Y)$ be the $C^*$-algebra (for
the operator norm) of adjointable endomorphisms, $\End_B^0(Y)$ the
ideal of compact endomorphisms, which is the completion of
the finite rank endomorphisms:
$\End^{00}_B(Y)$. The latter is
generated by the rank one endomorphisms $\Theta_{x,y}$, $x,y\in Y$,
defined by $\Theta_{x,y}z=x(y|z)_R$, $z\in Y $.

For each $k\in\Z$, the projection onto the $k$-th spectral subspace
for the circle action is defined by an operator $\Phi_k$ on $X$  via
$$\Phi_k(x)=
\frac{1}{2\pi} \int^{2\pi}_0e^{-ikt}\sigma_t(x)dt,\ \ x\in X.$$ The
range of $\Phi_k$ is the submodule $X_k$. These ranges give us the
natural $\Z$-grading of $X$. The operators $\Phi_k$ are adjointable
endomorphisms of the $F$-module $X$ such that
$\Phi_k^*=\Phi_k=\Phi_k^2$ and $\Phi_k\Phi_l=\delta_{k,l}\Phi_k$. If
$K\subset\Z$ then the sum $\sum_{k\in K}\Phi_k$ converges strictly to a
projection in the endomorphism algebra, \cite{PRen}. In particular sum
$\sum_{k\in\Z}\Phi_k$ converges strictly to the identity operator on
$X$.

The following Lemma is the key step in obtaining a Kasparov module.
\begin{lemma}
For a circle action on $A$ the following conditions are equivalent:\\
(i) the action satisfies the SSA;\\
(ii) for all $a\in A$
and $k\in\Z$, the endomorphism $a\Phi_k$ of the right $F$-module
$X$ is compact.
\end{lemma}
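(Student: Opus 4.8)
The plan is to prove the two implications separately, with the bulk of the work going into (i) $\Rightarrow$ (ii). For (ii) $\Rightarrow$ (i), I would argue contrapositively: recall that $a\Phi_k$ compact for all $a$ forces in particular $\pi_k(f)=f\Phi_k|_{X_k}$ to be a compact endomorphism of $X_k$ for every $f\in F$. But $\pi_k(F)$ contains (the closure of) $\pi_k(F_k)$, which acts on $X_k=\overline{A_k}$ with an approximate unit coming from $A_kA_k^*$; since $X_k$ is a full $F_k$-module (after passing to the ideal $F_k$, by Lemma~\ref{lem:left-full} applied in that corner), the identity of the multiplier algebra would be a limit of compacts, and that can only happen if $F_k$ itself — hence $\End_{F}(X_k)$ via $\pi_k$ — already decomposes so that $\pi_k(F)=\pi_k(F_k)$ with $\ker\pi_k$ complementary. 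The clean way to run this is: $a\Phi_k$ compact $\Rightarrow \Theta$-type estimates show $\pi_k(F)\subseteq \End^0_F(X_k)$, and then the equality $\pi_k(F)=\pi_k(F_k)$ drops out because $\End^0_F(X_k)$ is (Morita equivalent to, hence carries the same ideal lattice as) $F_k$.

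For the main direction (i) $\Rightarrow$ (ii), the strategy is to exhibit $a\Phi_k$ explicitly as a norm-limit of finite-rank endomorphisms $\Theta_{x,y}$. First reduce to $a\in A_j$ homogeneous, since finite sums of such are dense in $A$ and compacts form a closed ideal; then $a\Phi_k$ maps $X_k$ into $X_{j+k}$ and kills $X_\ell$ for $\ell\neq k$, so it suffices to understand the operator $A_k\ni b\mapsto ab\in A_{j+k}$. The SSA says $\pi_k(F)=\pi_k(F_k)$, i.e.\ left multiplication by $F$ on $X_k$ factors through the corner $F_k=\overline{A_kA_k^*}$; pick an approximate unit $(u_\lambda)$ for $F_k$ built from finite sums $\sum_i x_i x_i^*$ with $x_i\in A_k$. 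Then for $b\in A_k$ one has $\|b - \sum_i x_i x_i^* b\|\to 0$ uniformly on the unit ball (this is where one uses that $X_k$ is essentially a full Hilbert $F_k$-module and $\Phi(x_i^*b)=x_i^*b$ since $x_i^*b\in F=A_0$), and therefore $a\Phi_k = \lim_\lambda \sum_i \Theta_{a x_i,\, x_i}$ in operator norm, where $\Theta_{ax_i,x_i}z = (ax_i)(x_i|z)_R = a x_i x_i^* \Phi_k(z)$. Each summand is rank-one, so $a\Phi_k\in\End^0_F(X)$.

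The step I expect to be the main obstacle is showing that the approximate unit $(u_\lambda)$ of $F_k$ acts on the whole module $X_k$ as a genuine approximate unit for the norm — i.e.\ that $\|b - u_\lambda b\|_{X}\to 0$ for $b\in A_k$, uniformly enough to pass to the strict/operator-norm limit on endomorphisms. This is exactly the content of $X_k$ being a full Hilbert module over the ideal $F_k$ with $F_k=\overline{\langle X_k,X_k\rangle}$: the general Hilbert-module fact that $\overline{\operatorname{span}}\,\langle Y,Y\rangle Y = Y$ does the job, but one must be careful that the SSA is precisely what guarantees $F$ acts through $F_k$ so that no "extra" part of $F$ obstructs compactness. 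Once that is in hand, everything else is bookkeeping with the grading and the identities $\Phi_k^*=\Phi_k=\Phi_k^2$.
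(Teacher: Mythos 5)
Your overall plan — exhibit $a\Phi_k$ as a norm limit of finite-rank operators for (i)$\Rightarrow$(ii), and read off the SSA from the shape of the compacts on $X_k$ for (ii)$\Rightarrow$(i) — is the right one and matches the paper's strategy. However, there is a genuine gap in the hard direction.

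\textbf{The gap in (i)$\Rightarrow$(ii).} You assert that for $b\in A_k$ one has $\|b-u_\lambda b\|\to 0$ \emph{uniformly on the unit ball}, where $u_\lambda=\sum_i x_ix_i^*$ is an approximate unit of $F_k$. This is false in general: it would force $\|1-u_\lambda\|_{\End(X_k)}\to 0$, i.e.\ the identity of $\End(X_k)$ would lie in $\End^0_F(X_k)=\pi_k(F_k)$, making $\pi_k(F_k)$ (hence $F_k$) unital. That fails already for $SU_q(2)$, whose $F_k$ are non-unital ideals. An approximate unit for a non-unital $C^*$-algebra never converges to $1$ in norm, only strictly/pointwise. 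What you actually need — and can prove — is the weaker statement that for your \emph{fixed} homogeneous $a\in A_j$, $\|a(1-u_\lambda)\|_{\End(X_k)}\to 0$. To get it, note $a^*a\in A_0=F$; the SSA gives $g\in F_k$ with $\pi_k(a^*a)=\pi_k(g)$, and then
\[
\|a(1-u_\lambda)\|_{\End(X_k)}^2=\|\pi_k\bigl((1-u_\lambda)a^*a(1-u_\lambda)\bigr)\|
=\|\pi_k\bigl((1-u_\lambda)g(1-u_\lambda)\bigr)\|\le\|(1-u_\lambda)g(1-u_\lambda)\|_F\to 0.
\]
With this correction, $a\Phi_k=\lim_\lambda au_\lambda\Phi_k=\lim_\lambda\sum_i\Theta_{ax_i,x_i}$ in $\End_F(X)$-norm and the proof of (i)$\Rightarrow$(ii) closes. (The paper sidesteps the issue entirely by arguing in two stages: $\Theta_{x,y}=xy^*\Phi_k$ gives $f\Phi_k$ compact for $f\in F_k$, hence for $f\in F$ by the SSA, hence $af\Phi_k$ compact for $a\in A$, $f\in F$, and finally $b\Phi_k$ is compact for all $b\in A$ by density of $AF$ in $A$. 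This avoids both the homogeneous reduction and the approximate unit of $F_k$.)

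\textbf{On (ii)$\Rightarrow$(i).} Your conclusion is right but the route through ``Morita equivalence preserves ideal lattices'' is more machinery than is needed, and the way you phrase it leaves the key identification implicit. The direct argument: if $f\Phi_k$ is compact, compress any approximating finite sum of rank-one operators by $\Phi_k$ on both sides; this produces an approximation by $\Theta_{x,y}$ with $x,y\in X_k$, and each such $\Theta_{x,y}=xy^*\Phi_k$ lies in $\pi_k(A_kA_k^*)$. Since $\pi_k$ restricted to the closed ideal $F_k$ has closed range, $\pi_k(f)\in\overline{\pi_k(A_kA_k^*)}=\pi_k(F_k)$, which is the SSA.
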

\begin{proof}
Assume the action satisfies the SSA. If $x,y\in A_k$ and $z\in X$, then
$$\Theta_{x,y}z=x\Phi(y^*z)=x\Phi(y^*z_k)=xy^*z_k=xy^*\Phi_kz.$$
Thus $\Theta_{x,y}=xy^*\Phi_k$. It follows that $a\Phi_k$ is compact
for any $a\in A_kA_k^*$. Since $A_kA_k^*$ is dense in $F_k$, we see
that $f\Phi_k$ is compact for any $f\in F_k$, and hence $f\Phi_k$ is
compact for any $f\in F$ by the SSA. But then $af\Phi_k$ is compact for
any $f\in F$ and $a\in A$. Since $AF$ is dense in $A$, we can
approximate $b\Phi_k$ for any $b\in A$ by (compact) endomorphisms of
the form $af\Phi_k$.

Conversely, assume $f\Phi_k$ is compact for some $f\in F$, so that
$f\Phi_k$ can be approximated by finite sums of operators
$\Theta_{x,y}$, $x,y\in X_k$. We have seen, however, that
$\Theta_{x,y}=xy^*\Phi_k$, and so $\pi_k(f)$ is in $\pi_k(F_k)$.
Therefore if $f\Phi_k$ is compact for all $f\in F$ and $k\in\Z$, the
SSA is satisfied.
\end{proof}



Since we have the circle action defined on $X$, we may use the
generator of this action to define an unbounded operator $\D$. We
will not define or study $\D$ from the generator point of view,
instead taking a more bare-hands approach. It is easy to check that
$\D$ as defined below is the generator of the circle action.
The theory of unbounded operators on $C^*$-modules that we require
is all contained in Lance's book, \cite[Chapters 9,10]{L}. We quote
the following definitions (adapted to our situation).

\begin{defn} \cite{L} Let $Y$ be a right $C^*$-$B$-module. A densely defined
unbounded operator $$\D\colon {\rm dom}\ \D\subset Y\to Y$$ is a
$B$-linear operator defined on a dense $B$-submodule ${\rm dom}\
\D\subset Y$. The operator $\D$ is closed if the graph $ G(\D)=\{(x, \D
x):x\in{\rm dom}\ \D\}$ is a closed submodule of $Y\oplus Y$.
\end{defn}

If $\D\colon \mbox{dom}\ \D\subset Y\to Y$ is densely defined and
unbounded, define a submodule
$$\mbox{dom}\ \D^*:=\{y\in Y:\exists z\in Y\ \mbox{such that}\
 \forall x\in\mbox{dom}\ \D, (\D x|y)_R=(x|z)_R\}.$$
Then for $y\in \mbox{dom}\ \D^*$ define $\D^*y=z$. Given
$y\in\mbox{dom}\ \D^*$, the element $z$ is unique, so $\D^*\colon
\mbox{dom}\D^*\to Y$, $\D^*y=z$ is well-defined, and moreover is
closed.

\begin{defn} \cite{L}
Let $Y$ be a right $C^*$-$B$-module. A densely defined unbounded
operator $\D$ is symmetric if for all $x,y\in{\rm dom}\ \D$ we have $
(\D x|y)_R=(x|\D y)_R$. A symmetric operator $\D$ is self-adjoint if
${\rm dom}\ \D={\rm dom}\ \D^*$ (and so $\D$ is necessarily closed). A
densely defined unbounded operator $\D$ is regular if $\D$ is closed,
$\D^*$ is densely defined, and $1+\D^*\D$ has dense range.
\end{defn}
The extra requirement of regularity is necessary in the $C^*$-module
context for the continuous functional calculus, and is not
automatic, \cite[Chapter 9]{L}. With these definitions in hand, we
return to our $C^*$-module $X$.
The following can be proved just as in \cite[Proposition 4.6]{PRen}, or equivalently by
observing that the operator $\D$ is presented in diagonal form.

\begin{prop}\label{CstarDee}
Let $X$ be the right $C^*$-$F$-module of Definition \ref{Fmod}.
Define $X_\D\subset X$ to be the linear space
$$ X_\D=
\{x=\sum_{k\in\Z}x_k\in X:\Vert\sum_{k\in\Z}k^2(x_k|x_k)_R\Vert<\infty\}.$$
For $x=\sum_{k\in\Z}x_k\in X_\D$ define
$ \D x=\sum_{k\in\Z}kx_k.$
Then $\D\colon X_\D\to X$ is a self-adjoint regular operator on $X$.
\end{prop}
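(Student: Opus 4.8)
The plan is to use the observation, made in the statement, that $\D$ is presented in diagonal form relative to the orthogonal decomposition $X=\bigoplus_{k\in\Z}X_k$; the whole assertion then reduces to the $C^*$-module version of the elementary spectral theory of a diagonal operator. The single analytic ingredient is the bookkeeping lemma that, \emph{for any bounded sequence $(c_k)_{k\in\Z}$ of scalars, the net of finite partial sums of $\sum_{k\in\Z}c_k\Phi_k$ converges strictly to an adjointable endomorphism of $X$ of operator norm at most $\sup_k\abs{c_k}$, with adjoint $\sum_k\overline{c_k}\,\Phi_k$.} This is immediate from the identity $(x|x)_R=\sum_k(\Phi_kx|\Phi_kx)_R$ in $F$ — a consequence of the strict convergence $\sum_k\Phi_k\to1$ recorded before the statement, together with the orthogonality $(X_k|X_l)_R=0$ for $k\neq l$: for a finite set $S\subset\Z$ one has $\Vert\sum_{k\in S}c_k\Phi_kx\Vert^2=\Vert\sum_{k\in S}\abs{c_k}^2(\Phi_kx|\Phi_kx)_R\Vert\le(\sup_k\abs{c_k})^2\Vert\sum_{k\in S}(\Phi_kx|\Phi_kx)_R\Vert$, whose right-hand side tends to $0$ as $S$ avoids ever larger finite subsets of $\Z$.

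Granting this, the elementary points are routine. The submodule $X_\D$ is dense because the truncations $\sum_{\abs k\le N}\Phi_kx$ lie in $X_\D$ and converge to $x$, and it is plainly an $F$-submodule; and $\D$ really maps $X_\D$ into $X$ because for $x=\sum_kx_k\in X_\D$ the partial sums $\sum_{\abs k\le N}kx_k$ are Cauchy in $X$, as $\Vert\sum_{N<\abs k\le M}kx_k\Vert^2=\Vert\sum_{N<\abs k\le M}k^2(x_k|x_k)_R\Vert\to0$. Symmetry of $\D$ then follows by passing inner products through these limits and using $(X_k|X_l)_R=0$: $(\D x|y)_R=\lim_N\sum_{\abs k\le N}k(x_k|y_k)_R=(x|\D y)_R$ for all $x,y\in X_\D$.

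The substantive step is to construct the resolvents. Put $R_\pm=\sum_{k\in\Z}(k\pm i)^{-1}\Phi_k$, adjointable by the bookkeeping lemma with $\Vert R_\pm\Vert\le1$ and $R_\pm^*=R_\mp$. One verifies $R_\pm X\subseteq X_\D$ using $k^2\abs{k\pm i}^{-2}\le1$, and then a term-by-term computation gives $(\D\pm i)R_\pm=1$ on $X$ and $R_\pm(\D\pm i)=1$ on $X_\D$, so $\D\pm i\colon X_\D\to X$ is a bijection with bounded inverse $R_\pm$; in particular $\D$ is closed. A closed symmetric operator on a Hilbert $C^*$-module with $\D+i$ and $\D-i$ both surjective is self-adjoint and regular (\cite[Chapters 9, 10]{L}), which finishes the proof. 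For completeness one can see self-adjointness directly, and this is the one place where the $C^*$-module argument is not a verbatim copy of the Hilbert space proof, since no orthogonal-complement argument is available: given $y\in\dom\D^*$, use surjectivity of $\D-i$ to pick $x_0\in X_\D$ with $(\D-i)x_0=\D^*y-iy$; symmetry of $\D$ then gives $((\D+i)x\,|\,y-x_0)_R=0$ for all $x\in X_\D$, and surjectivity of $\D+i$ forces $y=x_0\in X_\D$, so $\dom\D^*=\dom\D$. Regularity afterwards is immediate, as $R_+R_-=R_-R_+=\sum_k(k^2+1)^{-1}\Phi_k$ is a bounded everywhere-defined inverse of $1+\D^2$. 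The only step demanding any care is this last $\dom\D^*\subseteq\dom\D$ argument; everything preceding it is mechanical once the strict-convergence lemma for the diagonal sums $\sum_kc_k\Phi_k$ is in place.
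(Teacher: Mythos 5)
Your proof is correct, and it is precisely the fleshing-out of the route the paper itself offers in one clause — "by observing that the operator $\D$ is presented in diagonal form" (the other offered route being to cite \cite[Proposition~4.6]{PRen}, whose argument is the same in substance). The bookkeeping lemma on strict convergence of $\sum_k c_k\Phi_k$ for bounded $(c_k)$ and the explicit bounded resolvents $R_\pm=\sum_k(k\pm i)^{-1}\Phi_k$ are exactly what make the diagonal-form observation into a complete proof, and your direct verification of $\operatorname{dom}\D^*\subseteq\operatorname{dom}\D$ via surjectivity of $\D\pm i$ is the standard $C^*$-module substitute for the Hilbert-space orthogonal-complement argument.
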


There is a continuous functional calculus for self-adjoint regular
operators, \cite[Theorem 10.9]{L}, and we use this to obtain spectral
projections for $\D$ at the $C^*$-module level. Let $f_k\in C_c({\Rl})$
be $1$ in a small neighbourhood of $k\in{\Z}$ and zero on
$(-\infty,k-1/2]\cup[k+1/2,\infty)$. Then it is clear that
$ \Phi_k=f_k(\D).$
That is the spectral projections of $\D$ are the same as the
projections onto the spectral subspaces of the circle action.

\begin{lemma}\label{compactendo}
If the SSA holds , then for all $a\in A$, the operator
$a(1+\D^2)^{-1/2}$ is a compact endomorphism of the $F$-module~$X$.
\end{lemma}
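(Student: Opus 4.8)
The plan is to reduce the claim to the fact, already established in the previous Lemma, that $a\Phi_k$ is a compact endomorphism of $X$ for every $a\in A$ and every $k\in\Z$, together with the norm convergence of the spectral expansion of $(1+\D^2)^{-1/2}$. First I would write, using the continuous functional calculus for the self-adjoint regular operator $\D$ and the identification $\Phi_k=f_k(\D)$, the strictly convergent expansion
$$
(1+\D^2)^{-1/2}=\sum_{k\in\Z}(1+k^2)^{-1/2}\Phi_k .
$$
Multiplying on the left by $a\in A$ gives $a(1+\D^2)^{-1/2}=\sum_{k\in\Z}(1+k^2)^{-1/2}\,a\Phi_k$. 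By the Lemma above, each term $a\Phi_k$ lies in $\End^0_F(X)$, hence so does each partial sum $\sum_{|k|\le N}(1+k^2)^{-1/2}a\Phi_k$.

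The next step is to upgrade this to norm convergence in $\End_F(X)$, so that the limit stays in the closed ideal of compact endomorphisms. Since the $\Phi_k$ are mutually orthogonal projections, for a tail one has, for any $x\in X$,
$$
\Big\|\sum_{|k|>N}(1+k^2)^{-1/2}a\Phi_k x\Big\|_X^2
\le \|a\|^2\,\sup_{|k|>N}(1+k^2)^{-1}\;\Big\|\sum_{|k|>N}\Phi_k x\Big\|_X^2
\le \|a\|^2(1+N^2)^{-1}\|x\|_X^2 ,
$$
using $\|\sum_{|k|>N}\Phi_k x\|_X\le\|x\|_X$ because $\sum_{|k|>N}\Phi_k$ is a projection. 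Hence the operator norm of the tail is at most $\|a\|(1+N^2)^{-1/2}\to0$, so the series converges in operator norm and $a(1+\D^2)^{-1/2}\in\End^0_F(X)$.

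I do not expect a serious obstacle here: the only point requiring a little care is that the decay $(1+k^2)^{-1/2}$ is what forces \emph{norm} (not merely strict) convergence of the series, which is exactly what lets us conclude membership in the closed ideal $\End^0_F(X)$ rather than just in $\End_F(X)$; the compactness of each $a\Phi_k$ is the substantive input and has already been proved using the SSA. One could alternatively phrase the argument by noting that $\D$ is presented in diagonal form with summands $X_k$ on which $a$ acts compactly, so $a(1+\D^2)^{-1/2}$ is a norm limit of finite-rank-block operators; either way the estimate above is the whole content.
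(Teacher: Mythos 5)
Your proposal is correct and takes essentially the same approach as the paper. The paper's proof is a one-sentence observation that the series $\sum_{k\in\Z}(1+k^2)^{-1/2}a\Phi_k$ converges in operator norm because the $a\Phi_k$ are compact with mutually orthogonal initial spaces and the coefficients decay to zero; you have simply written out the tail estimate that justifies the norm convergence, which is the whole content of the argument.
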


\begin{proof}
Since $a\Phi_k$ is a compact endomorphism for all $a\in A$, and
$a\Phi_k$, $a\Phi_m$ have orthogonal initial spaces, the sum
$$a(1+\D^2)^{-1/2}=\sum_{k\in\Z}(1+k^2)^{-1/2}a\Phi_k$$
converges in norm to a compact endomorphism.
\end{proof}

\begin{prop}\label{Kasmodule}
If the SSA holds, the pair $(X,\D)$ is an unbounded Kasparov module
defining a class in $KK_1(A,F)$.
\end{prop}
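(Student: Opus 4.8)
The plan is to check directly that $(X,\D)$ satisfies the axioms of an odd unbounded Kasparov $(A,F)$-module and then to pass to a bounded representative by the usual transform, just as in \cite{PRen}. Almost everything needed is already in place: $X$ is a full right $C^*$-$F$-module (Definition~\ref{Fmod}), $\D$ is self-adjoint and regular (Proposition~\ref{CstarDee}), and for every $a\in A$ the endomorphism $a(1+\D^2)^{-1/2}$ is compact (Lemma~\ref{compactendo}). What remains to verify is that $A$ acts on $X$ by adjointable endomorphisms and that $[\D,a]$ extends to a bounded adjointable operator for all $a$ in a dense $*$-subalgebra of $A$.

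For the first point, left multiplication on the dense submodule $A\subset X$ satisfies $\Vert ax\Vert_X^2=\Vert\Phi(x^*a^*ax)\Vert\le\Vert a\Vert^2\,\Vert\Phi(x^*x)\Vert=\Vert a\Vert^2\,\Vert x\Vert_X^2$ since $\Phi$ is a positive contraction, so it extends to $X$; its adjoint is left multiplication by $a^*$, and the action is nondegenerate because $\overline{AA}=A$ is dense in $X$. For the second point, let $\mathcal{A}_0$ be the algebraic span of $\bigcup_{k\in\Z}A_k$, i.e.\ the dense $*$-subalgebra of finite sums of $\s$-homogeneous elements (density follows, e.g., by Fej\'er averaging of $t\mapsto\s_t(a)$). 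If $a\in A_k$ and $x=\sum_m x_m\in X_\D$, then $ax=\sum_m ax_m$ with $ax_m\in X_{k+m}$ and
$$\sum_m(k+m)^2(ax_m|ax_m)_R\le\Vert a\Vert^2\sum_m(k+m)^2(x_m|x_m)_R<\infty,$$
using $(ax_m|ax_m)_R\le\Vert a\Vert^2(x_m|x_m)_R$ together with the convergence of $\sum_m m^2(x_m|x_m)_R$ and $\sum_m(x_m|x_m)_R$; hence $a$ maps $X_\D=\dom\D$ into itself. On $X_\D$ one then computes $\D(ax)-a\D x=\sum_m\big((k+m)-m\big)ax_m=k\,ax$, so $[\D,a]=k\,a$ is bounded, and for a general element of $\mathcal{A}_0$ the commutator is a finite sum of such operators, hence bounded.

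Thus $(X,\D)$ is an odd unbounded Kasparov $(A,F)$-module, and the bounded transform $\D(1+\D^2)^{-1/2}$ (together with the $A$-action) gives a Kasparov module, so $[(X,\D)]=[\D]$ is a well-defined class in $KK_1(A,F)$. The only point requiring any care is the verification that $\s$-homogeneous elements preserve $\dom\D$; this is exactly where the summability condition defining $X_\D$ enters, and it relies on the regularity of $\D$ established in Proposition~\ref{CstarDee}. Everything else is routine.
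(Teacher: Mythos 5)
Your proof is correct, but it takes a genuinely different route from the paper. The paper follows Kasparov's bounded picture directly: it forms $V=\D(1+\D^2)^{-1/2}$ and verifies by hand that $[V,a]$ is compact for all $a\in A$, first for homogeneous $a$ (via the algebraic identity expressing $[V,a]$ in terms of $[\D,a](1+\D^2)^{-1/2}$ and $[(1+\D^2)^{1/2},a](1+\D^2)^{-1/2}$, the latter estimated explicitly by a convergent series of compact endomorphisms), then for general $a$ by a $\Vert[V,\cdot]\Vert\le 2\Vert\cdot\Vert$ continuity argument. You instead verify the Baaj--Julg axioms for an unbounded Kasparov module: adjointability of the left $A$-action, self-adjointness and regularity of $\D$, bounded commutators $[\D,a]=ka$ on the dense $*$-subalgebra $\mathcal A_0$ of finite sums of homogeneous elements (with the check that $\mathcal A_0$ preserves $X_\D$), and compactness of $a(1+\D^2)^{-1/2}$; you then invoke the bounded transform as a black box. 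The paper's approach is more self-contained and essentially re-derives the relevant part of Baaj--Julg in this concrete setting; yours is shorter and more modular, delegating the analytic heavy lifting to the general theory. One small inaccuracy of emphasis: the computation showing $\mathcal A_0$ preserves $X_\D$ uses only the explicit description of $X_\D$, not the regularity of $\D$ per se — regularity is what licenses the continuous functional calculus needed to form $V$. This does not affect correctness.
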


\begin{proof}
We will use the approach of \cite[Section 4]{K}.
Let $V=\D(1+\D^2)^{-1/2}$. We need to show
that various operators belong to $\End^0_F(X)$. First, $V-V^*=0$, so
$a(V-V^*)$ is compact for all $a\in A$. Also
$a(1-V^2)=a(1+\D^2)^{-1}$ which is compact from Lemma
\ref{compactendo} and the boundedness of $(1+\D^2)^{-1/2}$. Finally,
we need to show that $[V,a]$ is compact for all $a\in A$. First we
suppose that $a=a_m$ is homogenous for the circle action. Then \bean
[V,a]&=&[\D,a](1+\D^2)^{-1/2}-\D(1+\D^2)^{-1/2}[(1+\D^2)^{1/2},a]
(1+\D^2)^{-1/2}\nno &=&b_1(1+\D^2)^{-1/2}+Vb_2(1+\D^2)^{-1/2},\eean
where $b_1=[\D,a]=ma$ and $b_2=[(1+\D^2)^{1/2},a]$. Provided that
$b_2(1+\D^2)^{-1/2}$ is a compact endomorphism, Lemma
\ref{compactendo} will show that $[V,a]$ is compact for all
homogenous $a$. So consider the action of
$[(1+\D^2)^{1/2},a](1+\D^2)^{-1/2}$ on $x=\sum_{k\in\Z}x_k$. We find
\begin{align}
\sum_{k\in\Z}[(1+\D^2)^{1/2},a](1+\D^2)^{-1/2}x_k&= \sum_{k\in\Z}
\left((1+(m+k)^2)^{1/2}-(1+k^2)^{1/2}\right)(1+k^2)^{-1/2}a x_k\nonumber\\
&=\sum_{k\in\Z}f_{m}(k)a \Phi_kx.\label{limit}\end{align} The
function \ben f_{m}(k)=\left((1+(m+k)^2)^{1/2}-(1+k^2)^{1/2}\right)
(1+k^2)^{-1/2}\een goes to $0$  as $k\to\pm\infty$, and as the
$a_m\Phi_k$ are compact with orthogonal ranges, the sum in
(\ref{limit}) converges in the operator norm on endomorphisms and so
converges to a compact endomorphism. For  $a\in A$  a finite sum of
homogenous terms, we apply the above reasoning to each term in the
sum to find that $[(1+\D^2)^{1/2},a](1+\D^2)^{-1/2}$ is a compact
endomorphism.

Now let $a\in A$ be the norm limit of a Cauchy sequence
$\{a_i\}_{i\geq 0}$ where each $a_i$ is a finite sum of homogenous
terms. Then
$$\Vert[V,a_i-a_j]\Vert_{\End}\leq 2\Vert a_i-a_j\Vert_{\End}\to 0,$$
so the sequence $[V,a_i]$ is also Cauchy in norm, and so the limit
is compact.
\end{proof}


\begin{cor}
If the SSA holds, the pair $(X,\D)$ defines a class in the equivariant
$KK$-group $KK^\T_1(A,F)$.
\end{cor}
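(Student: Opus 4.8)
The plan is to recognise $(X,\D)$, as constructed above, as a $\T$-equivariant unbounded Kasparov module, so that Proposition~\ref{Kasmodule} upgrades verbatim to the equivariant statement. Here $\T$ acts on the coefficient algebra $F=A_0$ trivially, while on $A$ it acts by $\sigma$; so what must be supplied is a strongly continuous action of $\T$ on $X$ by $F$-linear unitaries $\gamma_t$ for which the left action of $A$ is equivariant and $\D$ is $\T$-invariant.

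First I would take $\gamma_t$ to be the extension to $X$ of the circle action $\sigma$ from the dense subspace $A\subset X$ (already used in the text). Each $\sigma_t$ is $F$-linear because $\sigma_t(af)=\sigma_t(a)f$ for $f\in F$, and it is isometric for $\Vert\cdot\Vert_X$ since $(\sigma_t(a)|\sigma_t(b))_R=\Phi(\sigma_t(a^*b))=\Phi(a^*b)=(a|b)_R$ by $\sigma$-invariance of $\Phi$; hence $\sigma_t$ extends to a unitary on $X$. Strong continuity on $X$ follows from the contractivity of $\Phi$, which gives $\Vert\sigma_t(a)-a\Vert_X\le\Vert\sigma_t(a)-a\Vert_A$ for $a\in A$, together with density of $A$ in $X$ and uniform boundedness of the $\gamma_t$. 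Equivariance of the left action is immediate: regarding $a\in A$ as the operator of left multiplication on $X$, we have $\gamma_t a\gamma_{-t}=\sigma_t(a)$, since $\sigma_t(ab)=\sigma_t(a)\sigma_t(b)$ on the dense subspace $A$.

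Next I would check $\T$-invariance of $\D$. Since $\gamma_t$ multiplies $X_k$ by $e^{ikt}$ and $\D$ acts as multiplication by $k$ on $X_k$, the domain $X_\D$ of Proposition~\ref{CstarDee} is $\gamma_t$-invariant (its defining condition involves only the $\sigma$-fixed $F$-valued norms $(x_k|x_k)_R$) and $\gamma_t\D\gamma_{-t}=\D$ there; equivalently the bounded transform $V=\D(1+\D^2)^{-1/2}$ commutes with every $\gamma_t$. Consequently $\gamma_t(V)-V=0$ and no new compactness condition arises: the operators $a(V-V^*)$, $a(1-V^2)$ and $[V,a]$ were already shown in the proof of Proposition~\ref{Kasmodule} to lie in $\End^0_F(X)$ for all $a\in A$, and $\End^0_F(X)$ carries the induced continuous $\T$-action. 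Hence $(X,\D)$ together with $\gamma$ is a $\T$-equivariant unbounded Kasparov module in the sense of the unbounded picture of equivariant $KK$-theory \cite{K}, and therefore defines a class $[\D]\in KK^\T_1(A,F)$.

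I do not expect a genuine obstacle: the $\T$-action is literally the original circle action, $\D$ is exactly invariant, and every analytic estimate needed was established in the non-equivariant proof. The only point demanding any care is the module-level continuity of $\gamma$ (so that the data genuinely form an equivariant cycle rather than merely a fibrewise one), which is exactly the strong-continuity check above; beyond that the corollary is routine equivariant bookkeeping layered on Proposition~\ref{Kasmodule}.
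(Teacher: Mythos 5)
Your proof is correct and matches the paper's intent: the paper simply declares the corollary ``obvious from the constructions,'' and you have spelled out exactly why — the circle action on $A$ extends to a unitary $F$-linear action on $X$, $\D$ is literally the generator and hence invariant, the left $A$-action is $\sigma$-covariant, and the compactness conditions already verified in Proposition~\ref{Kasmodule} carry over unchanged. Nothing more is needed.
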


The proof of the corollary is obvious from the constructions.

\subsection{The equivariant constructions for the mapping cone algebra}

{}From the unbounded Kasparov $A$-$F$-module $(X,\D)$, we shall
construct a new equivariant Kasparov $M(F,A)$-$F$-module
$(\hat X,\hat\D)$. By pairing the class of the module
$(\hat{X},\hat{\D})$  with elements of $K^\T_0(M(F,A))$
we then get a map $K^\T_0(M(F,A))\to K^\T_0(F)$. Here $M(F,A)$
is the mapping cone $C^*$-algebra for the inclusion
$F\hookrightarrow A$ defined by
$$ M(F,A)=\{f\colon [0,\infty)\to A\,:\,f\in C_0([0,\infty),A),\,f(0)\in F\}.$$
The mapping cone algebra carries the circle action coming from
the circle action on $A$.

In \cite{Put}, Putnam showed that the $K_0$ group of $M(F,A)$ is given
by homotopy classes of partial isometries $v\in A^\sim\otimes \Mat_k(\C)$
with $vv^*,\,v^*v\in F^\sim\otimes \Mat_k(\C)$. Before summarising the
construction of $(\hat{X},\hat{\D})$ from~\cite{CPR1}, we adapt
Putnam's description of the $K$-theory of the mapping cone to an
equivariant setting.

Denote by $V^\T(F,A)$ the set of
$\sigma\otimes\mbox{Ad} U$ invariant partial isometries $v\in
A^\sim\otimes \B(\HH)$, where $U\colon\T\to \B(\HH)$ is some finite
dimensional unitary representation (which varies with $v$ but which we
denote generically by $U$),
such that
$vv^*$ and $v^*v$ belong to $F^\sim\otimes\B(\HH)$.

Consider the equivalence relation on $V^\T(F,A)$ generated by the following two
conditions: two
invariant partial isometries $v_1,\,v_2$ are equivalent if they are joined by a
homotopy consisting of invariant partial isometries or if they are a
pair of the form $v, v\oplus p$, where $p$ is an invariant
projection in $F^\sim\otimes \B(\HH)$ for some $\T$-module~$\HH$. For
$v\in V^\T(F,A)$ we define a projection $p_v$ in a matrix algebra
over the unitization of the mapping cone algebra $M(F,A)$ by
$$p_v(t)=\bma 1-vv^*+\frac{t^2vv^*}{1+t^2}& -iv\frac{t}{1+t^2}\\
iv^*\frac{t}{1+t^2}& \frac{v^*v}{1+t^2}\ema.$$ Then $[p_v]-\left[\bma 1
& 0\\ 0 & 0\ema\right]$ is an element of $K_0^\T(M(F,A))$ (in
particular $p_v$ is a $\s\otimes \Ad U$ invariant projection, for a
suitable representation $U$ of $\T$ coming from the representation
associated with $v$), and the map
$$
v\mapsto [p_v]-\left[\bma 1 & 0\\ 0 & 0\ema\right]
$$
is a bijection of $V^\T(F,A)/\!\!\sim\,\,$ onto $K_0^\T(M(F,A))$. Note
that Putnam considers the non-equivariant case, but the reader can
easily check that all his proofs and constructions carry over to the
$\T$-equivariant case. A general class in $K_0^\T(M(F,A))$ is denoted
by $[v]$ or equivalently by $[p_v]-[1]$.

The group  $K^\T_0(M(F,A))$ is a module over the representation ring of
$\T$, which we identify with the ring $\RR_\T=\Rl[\chi,\chi^{-1}]$ of
Laurent polynomials with real coefficients; therefore $\chi^n$
denotes the one-dimensional representation $t\mapsto e^{int}$. For a
$\T$-module $\HH$ we denote by $\HH[n]$ the module with the same
underlying space but with the action tensored with $\chi^n$. Now in
terms of partial isometries, the $\RR_\T$-module structure on
$K^\T_0(M(F,A))$ is described as follows: if $v\in
(A^\sim\otimes\B(\HH))^\T$ then $\chi[v]$ is the class of the partial
isometry $v$ considered as an element of $(A^\sim\otimes
\B(\HH[1]))^\T$.

The construction of $(\hat X,\hat\D)$ follows \cite{CPR1}, where
a $C^*$-algebra analogue of the Atiyah-Patodi-Singer (APS)
theory,~\cite{APS1}, was described.
We take as our starting point the equivariant
Kasparov module $(X,\D)$ coming from the circle action $\s$ on the
$C^*$-algebra $A$.

First form the space of finite sums of elementary tensors
$f=\sum_jf_j\otimes x_j$ where the $f_j$ are compactly supported
smooth functions on $[0,\infty)$ and the $x_j\in X$. Then complete this space
using the $C^*$-module norm coming from the inner
product
$$ (f|g)_{L^2([0,\infty))\otimes X}
=\sum_{i,j}\int_0^\infty \bar{f_i}(t)g_j(t)dt \,(x_i|y_j)_X,$$ which
for convenience we write as
$$( f|g)_{L^2([0,\infty))\otimes X}:=\int_0^\infty(f_t|g_t)_Xdt.$$
This module is the external tensor product $L^2([0,\infty))\otimes
X$. It carries an obvious left action of $M(F,A)$. We caution the
reader that it is not clear that the {\em completion} of the space of
finite sums of elementary tensors is a function space.
Discussion of this matter and the proof that the next definition does in fact
provide an unbounded Kasparov module can be found in \cite{CPR1}.
\begin{defn}[\cite{CPR1}]\label{dfn:APS-module}
Assume the SSA is satisfied. Define a graded unbounded equivariant
Kasparov $M(F,A)$-$F$-module by
$$(\hat{X},\hat\D)=\left(\bca
L^2([0,\infty))\otimes X\\L^2([0,\infty))\otimes X\oplus
\Phi_0X\eca,
\bma 0 & -\p_t+\D\\
\p_t+\D & 0\ema\right),$$ where we use APS boundary conditions in the
sense that we take the initial domain of $\hat\D$ to be the finite
linear span of elementary tensors $\xi$ such that $\xi\in \hat{X}$
and $ \hat\D\xi\in\hat{X}$ with $\ P\xi_1(0)=0,\ (1-P)\xi_2(0)=0$,
where $P=\chi_{[0,+\infty)}(\D)=\sum_{k\ge0}\Phi_k$ is the
non-negative spectral projection of $\D$.
\end{defn}

{\bf Remark}. The additional copy of $\Phi_0X$ (which has as inner
product the restriction of the inner product on $X$) allows us to
use  extended $L^2$ functions as in \cite[pp 58-60]{APS1}. These are
defined by considering functions $f$ that are finite sums of
elementary tensors $\sum_j f_j\otimes x_j$ where the $f_j$ are
functions on $[0,\infty)$ with a limit $f_j(\infty)$ as
$t\to\infty$. Then $f$ has a limit at infinity and we restrict our
attention to those $f$  such that $f-f(\infty)$ is in
$L^2([0,\infty))\otimes X$ and $\D f(\infty)=0$. The inner product
in the second component
is then
$$( f|f)=\int_0^\infty (f(t)-f(\infty)|f(t)-f(\infty))_Xdt
+(f(\infty)|f(\infty))_X.$$

The Kasparov module $(\hat X,\hat \D)$ is equivariant using the circle
action on $A$, which is trivial in the `$\Rl$'-direction. It thus
defines an element of $KK_0^\T(M(F,A),F)$. By pairing it with elements
of $K_0^\T(M(F,A))$ we get a homomorphism $\Ind_{\hat \D}\colon
K_0^\T(M(F,A))\to K_0^\T(F)$.

\begin{thm}[\cite{CPR1}]\label{mainequivariantresult}
Assume the SSA is satisfied. Let $U\colon \T\to\B(\HH)$ be a finite
dimensional unitary representation and $v\in A^\sim\otimes \B(\HH)$ a
$\s\otimes\Ad U$ invariant partial isometry with $v^*v$ and $vv^*$
projections in $F^\sim\otimes \B(\HH)$. Assume that the $\s\otimes
\iota$ homogeneous components of $v$ are partial isometries. Then we
have
\begin{align*}
&\Ind_{\hat\D}\left([p_v]-\left[\bma 1 & 0\\ 0 & 0\ema\right]\right)\\
&=-\Ind\left((P\otimes1)v(P\otimes1)\colon
v^*v(P\otimes1)X\otimes\HH\to vv^*(P\otimes1)X\otimes\HH\right)\in K_0^\T(F).
\end{align*}
\end{thm}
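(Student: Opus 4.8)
Following \cite{CPR1}, the plan is to evaluate the Kasparov product $[p_v]\otimes_{M(F,A)}[\hat X,\hat\D]$ explicitly and to recognise the resulting $F$-Fredholm operator as a Wiener--Hopf (Toeplitz) operator built from $P$ and $v$. The first step is to present the product as a compression. Since $p_v$ is a projection over $M(F,A)^\sim\otimes\B(\HH)$ and $[\hat\D,p_v]$ is bounded (here one uses that $[\D,v_k]=kv_k$, that $v=\sum_k v_k$ has only finitely many nonzero homogeneous components, and that $p_v(t)$ depends smoothly on $t$), the Kasparov product of $[p_v]-[1]$ with $(\hat X,\hat\D)$ is represented by the graded $F$-module $p_v(\hat X\otimes\HH)$ together with the compressed operator $p_v(\hat\D\otimes1)p_v$, corrected by the analogous untwisted data representing the class $[1]$. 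One checks that this compression is again self-adjoint and regular and that $f(\lambda-p_v(\hat\D\otimes1)p_v)^{-1}$ is a compact endomorphism for $f$ in a dense subalgebra, so that its odd part is an $F$-Fredholm operator whose $K_0^\T(F)$-index is $\Ind_{\hat\D}([p_v]-[1])$.

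The second step is to see that $(P\otimes1)v(P\otimes1)$ is itself $F$-Fredholm. Under the SSA, every $a\in A$ gives a compact endomorphism $a\Phi_k$ of $X\otimes\HH$ (this is the characterisation of the SSA established above); as $v$ has only finitely many nonzero homogeneous components, a short computation identifies the Hankel parts $(1-P)vP$ and $Pv(1-P)$ as finite sums of operators of the form $\Phi_m v_k\Phi_j$, hence as compact endomorphisms. Together with $v^*v,\,vv^*\in F^\sim\otimes\B(\HH)$ commuting with $P$, this exhibits $(P\otimes1)v(P\otimes1)$ as a regular Fredholm operator $v^*v(P\otimes1)(X\otimes\HH)\to vv^*(P\otimes1)(X\otimes\HH)$, whose index is a well-defined element of $K_0^\T(F)$.

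The heart of the argument is the model computation. Using the explicit formula for $p_v(t)$, I would conjugate $p_v(\hat\D\otimes1)p_v$ by an explicit $t$-dependent unitary interpolating between the APS boundary data at $t=0$ and the constant section at $t=\infty$; after this conjugation the operator splits as an orthogonal sum of an invertible piece, contributing nothing, and the half-cylinder operator $\partial_t+\D$ on $L^2([0,\infty))\otimes(X\otimes\HH)$ with APS boundary conditions but with the boundary data twisted by $v$. Decomposing $X$ into the eigenspaces $X_k$ of $\D$ and using that each $v_k$ is a partial isometry intertwining these modes, this operator block-decomposes, and on each block kernel and cokernel are computed by solving the first order ODE $f'+\D f=g$ on $[0,\infty)$ with the prescribed boundary condition, the extra summand $\Phi_0X$ together with the extended $L^2$ functions absorbing the modes lying in $\ker\D$, exactly as in \cite{APS1}. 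Comparing with the same analysis applied to the untwisted reference operator and bookkeeping the contributions of the finitely many modes on which the APS boundary condition meets the twisting by $v$ against the Hankel data of $v$, one arrives at
$$\Ind_{\hat\D}\big([p_v]-[1]\big)=-\Ind\big((P\otimes1)v(P\otimes1)\colon v^*v(P\otimes1)X\otimes\HH\to vv^*(P\otimes1)X\otimes\HH\big).$$

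I expect the third step to be the main obstacle: one must keep simultaneous track of the APS boundary conditions, the auxiliary $\Phi_0X$ summand together with its extended $L^2$ functions, and the twisting by $v$, and verify that the hypothesis that each homogeneous component $v_k$ be a partial isometry forces the twisted half-cylinder operator to be $F$-Fredholm and makes the mode-by-mode comparison with the Toeplitz operator exact. The overall minus sign is simply the Atiyah--Patodi--Singer orientation convention built into the choice $P=\chi_{[0,\infty)}(\D)$ of boundary projection. As a consistency check, when $v$ is a unitary the class $[p_v]-[1]$ lies in the image of $\iota_*\colon K_1^\T(A)\to K_0^\T(M(F,A))$, and since $\iota^*[\hat\D]=[\D]$ the formula reduces to the standard index pairing of the odd Kasparov module $[\D]$, whose phase is $2P-1$, with a unitary.
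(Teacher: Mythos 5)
The paper actually gives no proof of this theorem at all: it cites \cite{CPR1} for the non-equivariant statement and then adds a single sentence, namely that ``one must check that the kernel and cokernel projections are indeed invariant, which is immediate from the equivariance of the Kasparov module.'' Your sketch is a plausible reconstruction of the APS-style argument that lives in \cite{CPR1} --- computing the Kasparov product of $[p_v]-[1]$ with $(\hat X,\hat\D)$ by compression, reducing to the twisted half-cylinder operator, and comparing mode-by-mode with the Toeplitz operator $(P\otimes1)v(P\otimes1)$ --- and it is consistent with the setup in Definition~\ref{dfn:APS-module} and the surrounding remarks. Two points, however, deserve more care. First, the one thing this paper explicitly adds to the citation is precisely what your sketch omits: you nowhere verify that the kernel and cokernel of the Breuer--Fredholm/Toeplitz operator are $\T$-invariant $F$-submodules, which is what makes the index land in $K_0^\T(F)$ rather than $K_0(F)$. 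This follows from $\T$-equivariance of $(\hat X,\hat\D)$ and of $p_v$, but it should be stated. Second, you gesture at the hypothesis that each homogeneous component $v_k$ is a partial isometry as ``forcing'' various things, but the precise role (spelled out in the paper's Remark~(i) after the theorem) is that for homogeneous $v$ one has $v[\D\otimes1,v^*]=kvv^*$, which commutes with $\D\otimes1$; these are exactly the commutation relations required for the argument in \cite{CPR1} to be applicable. Without that, the compression $p_v(\hat\D\otimes 1)p_v$ need not have the clean block structure your third step relies on. Your honest flag that the third step is the main obstacle is accurate; given that the paper itself declines to reproduce it, the sketch is about as far as one can go blind, but the two items above should be filled in to match what the paper actually asserts.
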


The proof is exactly the same as the non-equivariant result of
\cite{CPR1}, except that one must check that the kernel and cokernel
projections are indeed invariant, which is immediate from the
equivariance of the Kasparov module.

{\bf Remarks.} 

(i) In \cite{CPR1} we could not state the (nonequivariant
version of the) above theorem for every element in $K_0(M(F,A))$, but
only those with particular commutation relations with spectral
projections of $\D$. The additional assumption on $v$ in the above
formulation is enough to get those relations satisfied. Indeed, if
$v\in A^\sim\otimes \B(\HH)$ is homogenous then
$v[\D\otimes1,v^*]=kvv^*$ for some $k\in\Z$, and this commutes with
$\D\otimes1$. 

(ii) One may also try to describe the class $[\hat\D]$ in the following way.
First realise the class $[\D]$ as an extension
\begin{equation*}
0\to \K\otimes F\to E\stackrel{\stackrel{\rho}{\leftarrow}}{\to} A\to 0
\end{equation*}
with $\rho$ the completely positive splitting given by $a\to PaP$, 
for $a\in A$, and $P=\chi_{[0,\infty)}(\D)$. As $P$ commutes with $F$, 
$\rho$ is an injective homomorphism when restricted to $F$, and so 
gives us a copy of $F$ inside $E$. 
From this we may deduce the exactness 
of the sequence
$$
0\to \K\otimes \CS F \to M(F,E) \stackrel{\stackrel{\tilde\rho}{\leftarrow}}{\to} M(F,A) \to 0,
$$
where $M(F,E),\,M(F,A)$ denote the mapping cones of the respective inclusions. Corresponding 
to this extension is a class 
$$[\tilde{\D}]\in KK_1^\T(M(F,A),\CS F)=
KK_1^\T(\CS M(F,A),F)=KK_0^\T(M(F,A),F).$$ 
There is some evidence that the class $[\tilde{\D}]$ coincides with the class of $[\hat\D]$.

The theorem gives us two important tools. The first is that the pairing
of $(\hat{X},\hat{\D})$ is given by the Kasparov product and so enjoys
all the usual functorial properties. The second is that we can compute
the index pairing of the theorem by considering Toeplitz type operators
$(P\otimes 1)v(P\otimes1)$, for which the computation is much simpler.
These tools use only the circle action. Next we 
exploit the KMS weight $\phi$.

\section{The equivariant spectral flow}
\label{sec:spec-trip-index}

\subsection{The induced trace}\label{ss:trace}

A KMS
weight provides some analytic tools that we now explain.

\begin{defn}
A weight $\phi$ on a $C^*$-algebra $A$ is $(\s,\beta)$-KMS weight
(KMS$_\beta$ weight for short) if $\phi$ is a semifinite, norm lower
semicontinuous, $\s$-invariant weight such that
$\phi(aa^*)=\phi(\s_{i\beta/2}(a)^*\s_{i\beta/2}(a))$ for all $a\in
{\rm dom}(\s_{i\beta/2}).$
\end{defn}

Here ${\rm dom}(\s_{i\beta/2})$ consists of all elements $a\in A$ such
that $t\mapsto\s_t(a)$ extends to a continuous function from $0\leq
\Im(t)\leq \beta/2$ which is analytic in the open strip.
We will assume throughout the rest of the paper that $\phi$ is a
faithful KMS$_\beta$ weight on $A$.
Introduce the notation
$$ \mbox{dom}(\phi)_+=\{a\in A_+:\phi(a)<\infty\},\ \ \
\mbox{dom}(\phi)^{1/2} =\{a\in A: a^*a\in  \mbox{dom}(\phi)_+\},$$
$$  \mbox{dom}(\phi)=\mbox{span}\{ \mbox{dom}(\phi)_+\}=
( \mbox{dom}(\phi)^{1/2})^* \mbox{dom}(\phi)^{1/2},$$ and extend $\phi$
to a linear functional on $\mbox{dom}(\phi)$. Recall that we defined a
conditional expectation $\Phi\colon A\to F$. We let $\tau$ be the
faithful norm lower semicontinuous semifinite trace on $F$ given
by~$\phi|_F$. Then $\phi=\tau\circ\Phi$, as $\phi$ is assumed to be
$\sigma$-invariant.

The GNS construction yields a Hilbert space $\HH:=\HH_\phi$, and a map
$\Lambda\colon \mbox{dom}(\phi)^{1/2}\to \HH$ with dense image and $\la
\Lambda(a),\Lambda(b)\ra=\phi(a^*b)$, where $\la\cdot,\cdot\ra$ is the
inner product. In fact, $\Lambda(\mbox{dom}(\phi)^{1/2}\cap
(\mbox{dom}(\phi)^{1/2})^*)$ is a left Hilbert algebra. The algebra $A$
is represented on $\HH$ as left multiplication operators,
$a\Lambda(b)=\Lambda(ab)$, and the weight $\phi$ extends to a normal
semifinite faithful weight on the von Neumann algebra $\pi(A)''$. We
have $\sigma_t^\phi=\sigma_{-\beta t}$ on $A$, \cite{Ku}.


We now construct a semifinite von Neumann algebra from a given faithful
KMS state or weight $\phi$ on~$A$. We need results from \cite{LN} at
this point. Namely consider the space $\HH_\tau$ of the
GNS-representation of the trace $\tau$ on $F$; then $\HH$ can be
identified with $X\otimes_F\HH_\tau$ via the map
$a\otimes\Lambda_\tau(f)\mapsto\Lambda_\phi(af)$. It follows that the
action of $A$ on $\HH$ extends to a representation of $\End_F(X)$ on
$\HH$.

\begin{lemma}\label{spatial trace}
We let $\cn=\End(X)''\subset \B(\HH)$, then there is a faithful normal
semifinite trace ${\rm Tr}_\phi$ on~$\cn$ such that ${\rm
Tr}_\phi(\Theta_{\xi,\xi})=\tau((\xi|\xi)_R)$ for all $\xi\in X$.
\end{lemma}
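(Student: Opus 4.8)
The plan is to realise $\mathrm{Tr}_\phi$ as the \emph{spatial} (Connes) trace associated to the pair of weights $\tau$ on $F$ and the dual weight on the commutant, using the identification $\HH\cong X\otimes_F\HH_\tau$ already noted before the statement. First I would set up the relevant operator-algebraic picture: by the discussion preceding the lemma, the map $a\otimes\Lambda_\tau(f)\mapsto\Lambda_\phi(af)$ gives a unitary $\HH\cong X\otimes_F\HH_\tau$ intertwining the left $A$-action with the left action of $A\subset\End_F(X)$, and this action extends to all of $\End_F(X)$, hence to $\cn=\End_F(X)''$. On the other hand, the right $F$-action on $X$ together with the $F$-action on $\HH_\tau$ makes $F'$ (the commutant of $J_\phi\pi_\phi(F)J_\phi$, equivalently the von Neumann algebra generated by the right $F$-action in the tensor-product picture) act on $\HH$, and one checks that $\cn$ is precisely the commutant of this algebra. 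Thus $\cn$ is in standard form on $\HH$, and the key input from \cite{LN} is that the induced/spatial trace construction applies: there is a canonical normal semifinite weight on $\cn$ built from $\tau$, which is in fact a trace because $\tau$ is tracial on $F$ and the $C^*$-module $X$ is used only to ``amplify'' without introducing a modular twist.

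The core computation is then the formula $\mathrm{Tr}_\phi(\Theta_{\xi,\xi})=\tau((\xi|\xi)_R)$ on rank-one endomorphisms. For this I would argue as follows: for $\xi\in X$ that also lies in (the image of) $\mathrm{dom}(\phi)^{1/2}$, the vector $\Lambda_\phi(\xi)\in\HH$ is, under the identification, essentially $\xi\otimes(\text{unit vector})$, and $\Theta_{\xi,\xi}$ acts on $X\otimes_F\HH_\tau$ as $\Theta_{\xi,\xi}\otimes 1$ followed by contraction through the $F$-valued inner product. Evaluating the candidate trace on the positive element $\Theta_{\xi,\xi}$ and unwinding the tensor product identification reduces the left-hand side to $\langle\Lambda_\tau((\xi|\xi)_R^{1/2}),\Lambda_\tau((\xi|\xi)_R^{1/2})\rangle_{\HH_\tau}=\tau((\xi|\xi)_R)$, using $\langle\Lambda_\phi(a),\Lambda_\phi(b)\rangle=\phi(a^*b)=\tau(\Phi(a^*b))$. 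Faithfulness, normality and semifiniteness of $\mathrm{Tr}_\phi$ all descend from the corresponding properties of $\tau$ via the standard-form machinery: semifiniteness because the $\Theta_{\xi,\xi}$ with $(\xi|\xi)_R\in\mathrm{dom}(\tau)$ span a weakly dense subset of positive elements with finite trace, normality because the spatial trace construction is normal by \cite{LN}, and faithfulness because $\tau$ is faithful and $X$ is a full $F$-module (so that $\xi\mapsto(\xi|\xi)_R$ detects nonzero positive endomorphisms).

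Finally I would verify the trace property $\mathrm{Tr}_\phi(T^*T)=\mathrm{Tr}_\phi(TT^*)$, which is where the main obstacle lies. It suffices to check it on rank-one endomorphisms and their sums: for $\xi,\eta\in X$ one has $\Theta_{\xi,\eta}^*\Theta_{\xi,\eta}=\Theta_{\eta,\eta}\,(\xi|\xi)_R$ and $\Theta_{\xi,\eta}\Theta_{\xi,\eta}^*=\Theta_{\xi(\eta|\eta)_R^{1/2},\,\xi(\eta|\eta)_R^{1/2}}$ up to the obvious rearrangements, and the equality of traces then comes down to $\tau((\eta|\eta)_R(\xi|\xi)_R)=\tau((\xi|(\eta|\eta)_R\xi)_R)$, i.e.\ exactly the tracial identity for $\tau$ on $F$ combined with the module-linearity $(\xi|\eta f)_R=(\xi|\eta)_Rf$. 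The delicate point is handling unboundedness: one must stay within a core of Hilbert--Schmidt-type endomorphisms (those $\Theta_{\xi,\eta}$ with $(\xi|\xi)_R,(\eta|\eta)_R\in\mathrm{dom}(\tau)$) on which all manipulations are legitimate, and then invoke normality to pass to the full $\cn$; this is routine but is the step requiring the most care, and it is essentially the content we are importing from \cite{LN}. Uniqueness of $\mathrm{Tr}_\phi$ with the stated property is then immediate, since the $\Theta_{\xi,\xi}$ generate $\cn$ as a von Neumann algebra and the trace is normal.
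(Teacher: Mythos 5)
Your proposal is correct and follows essentially the same route as the paper: identify $\HH$ with $X\otimes_F\HH_\tau$, recognize $\cn'$ as the von Neumann algebra generated by the right $F$-action (i.e.\ $(JFJ)''$), and invoke Connes' spatial derivative theory, normalizing $\Delta(\mathrm{Tr}_\phi/\tau')=1$ so that the resulting weight has trivial modular group and is therefore a trace, with the rank-one formula deferred to \cite{LN}, Section~3. The only differences are cosmetic: the paper first disposes of the simpler cases $X\cong H\otimes F$ (where $\cn\cong\B(H)\bar\otimes F''$ and $\mathrm{Tr}_\phi=\mathrm{Tr}\otimes\tau$) and $X$ a direct summand of $H\otimes F$ via Kasparov stabilization, which you skip; and your explicit verification of $\mathrm{Tr}_\phi(T^*T)=\mathrm{Tr}_\phi(TT^*)$ on rank-ones is a valid but redundant sanity check once the spatial derivative is set to $1$ (also note the small slip in notation there: one wants $\Theta_{\xi,\eta}\Theta_{\eta,\xi}=\Theta_{\xi(\eta|\eta)_R,\xi}$ with the \emph{right} $F$-action, reducing to $\tau((\xi|\xi)_R(\eta|\eta)_R)=\tau((\eta|\eta)_R(\xi|\xi)_R)$).
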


\begin{proof}
Consider first the case when $X\cong H\otimes F$ as a right Hilbert
$F$-module, where $H$ is a Hilbert space. Then $\HH\cong
H\otimes\HH_\tau$, $\cn\cong \B(H)\bar\otimes F''$ and the trace~${\rm
Tr}_\phi$ is simply ${\rm Tr}\otimes\tau$. We can then conclude that
${\rm Tr}_\phi$ exists if $X$ is only a direct summand of $H\otimes F$.
This is the case when $X$ is countably generated (in particular, when
$A$ is separable) by Kasparov's stabilization theorem, with
$H=\ell^2(\mathbb N)$. In general to construct ${\rm Tr}_\phi$ we can
argue as follows.

The commutant of $\cn$ can be identified with the commutant of $F$ in
$\B(\HH_\tau)$, that is, with the von Neumann algebra generated by
elements $f\in F$ acting on the right. To put it differently,
\begin{equation} \label{com}
\cn'=(JFJ)'',
\end{equation}
where $J$ is the modular conjugation defined by $\phi$ and $F$ acts
on the left. Define a trace $\tau'$ on $(JFJ)''$ by
$\tau'(Jf^*J)=\tau(f)$. At this moment we need to recall the notion
of spatial derivative, see~\cite{S}.

Assume we are given faithful normal semifinite weights $\psi$ on
$\cn$ and $\rho$ on $\cn'$. A vector $\xi\in\HH$ is called
$\rho$-bounded if the map $\Lambda_{\rho}(x)\mapsto x\xi$,
$x\in\mbox{dom}(\rho)^{1/2}$, extends to a bounded map
$R_\xi\colon\HH_{\rho}\to\HH$. As $R_\xi$ is an $\cn'$-module map,
the operator $R_\xi R_\xi^*$ belongs to $\cn$. The quadratic form
$$
\{\xi\in\HH\mid\xi\ \hbox{is}\ \ \rho\hbox{-bounded},\ \psi(R_\xi
R_\xi^*)<\infty\}\ni\xi\mapsto q(\xi):=\psi(R_\xi R_\xi^*)
$$
is closable and hence defines a positive self-adjoint operator
$\Delta(\psi/\rho)$ such that
$q(\xi)=\|\Delta(\psi/\rho)^{1/2}\xi\|^2$. The main property of
spatial derivatives is that for any fixed $\rho$ the map
$\psi\mapsto\Delta(\psi/\rho)$ gives a one-to-one correspondence
between faithful normal semifinite weights $\psi$ on $\cn$ and
nonsingular positive self-adjoint operators~$\Delta$ such that
$\Delta^{it}x\Delta^{-it}=\sigma^{\rho}_{-t}(x)$ for $x\in\cn'$.

The spatial derivative now gives us the definition of a trace
$\mbox{Tr}_\phi$ on $\cn=\End(X)''$ by requiring $\Delta({\rm
Tr}_\phi/\tau')=1$. It is not difficult to check, see
\cite[Section~3]{LN}, that for $\xi\in X$ we indeed have ${\rm
Tr}_\phi(\Theta_{\xi,\xi})=\tau((\xi|\xi)_R)$.
\end{proof}

The restriction of ${\rm Tr}_\phi$
to $\End_F(X)$ is a strictly lower semicontinuous strictly
semifinite trace, see e.g. \cite[Section 3]{LN}. In addition we notice that
as $\Theta_{x,x}=xx^*\Phi_0$ and $\tau(xx^*)=\tau(x^*x)$ for
$x\in F$, we can conclude that
\begin{equation} \label{tr}
{\rm Tr}_\phi(f\Phi_0)=\tau(f)
\end{equation}
for $f\in F_+$. Identities (\ref{com})-(\ref{tr}) mean that $\cn$ is
being given by the basic von Neumann algebra  construction associated
with the conditional expectation $\Phi\colon A''\to F''$, while ${\rm
Tr}_\phi$ is the canonical trace on $\cn$ defined by the trace $\tau$
on $F''$, \cite{Po}.

\begin{lemma}\label{lemma:not-too-big} Let $A,\,\sigma,\,\phi$, $F=A^\sigma$ 
be as above.
For all $f\in F$, $f\geq 0$ and $k\in\Z$, $k\neq 0$, we have
$$ \Tr_\phi(f\Phi_k)\leq e^{k\beta} \tau(f),$$
and equality holds if $A$ has full spectral subspaces.
\end{lemma}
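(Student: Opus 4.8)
The plan is to reduce everything to the single homogeneous identity
$$\Tr_\phi(xx^*\Phi_k)=e^{k\beta}\tau(xx^*),\qquad x\in A_k ,$$
and then bootstrap. For $x\in A_k$ the identity $\Theta_{x,y}=xy^*\Phi_k$ established above gives $\Theta_{x,x}=xx^*\Phi_k$, so Lemma~\ref{spatial trace} yields $\Tr_\phi(xx^*\Phi_k)=\tau((x|x)_R)=\tau(\Phi(x^*x))=\tau(x^*x)$, the last equality because $x^*x\in F$. On the other hand $x$ is $\sigma$-analytic with $\sigma_z(x)=e^{ikz}x$, hence $\sigma_{i\beta/2}(x)=e^{-k\beta/2}x$, and the KMS$_\beta$ condition gives $\tau(x^*x)=\phi(x^*x)=e^{k\beta}\phi(xx^*)=e^{k\beta}\tau(xx^*)$ (using $x^*x,xx^*\in F$). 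This proves the identity as an equality in $[0,+\infty]$; the case $k=0$ just recovers $(\ref{tr})$.

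Both sides of the identity are additive, so it extends to $\Tr_\phi(g\Phi_k)=e^{k\beta}\tau(g)$ for every $g$ in the cone $S_k$ of finite sums $\sum_i x_ix_i^*$ with $x_i\in A_k$. A Gram-matrix argument — decomposing the positive matrix $(x_i^*x_j)_{i,j}$ over $F$ into a finite sum of rank-one positive matrices and relabelling — shows that $w^*w\in S_k$ for every $w\in A_kA_k^*$, whence $S_k$ is norm-dense in $(F_k)_+$, where $F_k=\overline{A_kA_k^*}$ is a closed ideal of $F$ (since $FA_k\cup A_kF\subseteq A_k$). As $f\mapsto\Tr_\phi(f\Phi_k)$ and $f\mapsto e^{k\beta}\tau(f)$ are lower semicontinuous traces on $F_k$ agreeing on the dense cone $S_k$, a routine lower-semicontinuity argument promotes the identity to all of $(F_k)_+$.

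Now let $f\in F_+$; if $\tau(f)=+\infty$ the inequality is vacuous, so assume $f\in\dom(\tau)_+$. The structural point is that $F_k$ acts non-degenerately on the left of $X_k$: for $x\in A_k$ we have $(xx^*)^{1/n}x\to x$ with $(xx^*)^{1/n}\in F_k$, so $\overline{F_kX_k}=X_k$. Fix an approximate unit $(e_\lambda)$ of $F_k$. Since $\Phi_k\in\End_F(X)$ commutes with the left action of $F$ (left multiplication by $A_0=F$ commutes with $\sigma$, hence with $\Phi_k$), the operators $f^{1/2}e_\lambda f^{1/2}\Phi_k$ are positive, dominated by $f\Phi_k$, and — because $\pi_k(e_\lambda)\to1$ strongly on $\Phi_k\HH=X_k\otimes_F\HH_\tau$ while everything vanishes on $(1-\Phi_k)\HH$ — converge $\sigma$-weakly to $f\Phi_k$. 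By normality of $\Tr_\phi$ together with domination,
$$\Tr_\phi(f\Phi_k)=\lim_\lambda\Tr_\phi\big(f^{1/2}e_\lambda f^{1/2}\Phi_k\big)=e^{k\beta}\lim_\lambda\tau\big(f^{1/2}e_\lambda f^{1/2}\big)\le e^{k\beta}\tau(f),$$
the middle equality being the $(F_k)_+$-identity applied to $f^{1/2}e_\lambda f^{1/2}$ and the last step using $f^{1/2}e_\lambda f^{1/2}\le f$. If $A$ has full spectral subspaces then $F_k=F$, so $(e_\lambda)$ is an approximate unit of $F$ and $f^{1/2}e_\lambda f^{1/2}\to f$ $\sigma$-weakly in $F''$; lower semicontinuity of $\tau$ then upgrades $\tau(f^{1/2}e_\lambda f^{1/2})\le\tau(f)$ to $\tau(f^{1/2}e_\lambda f^{1/2})\to\tau(f)$, and the displayed chain becomes an equality.

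The one genuinely delicate part is the bookkeeping for the possibly unbounded, merely lower semicontinuous weights $\Tr_\phi$ and $\tau$: promoting the homogeneous identity from the cone $S_k$ to all of $(F_k)_+$, and justifying $\Tr_\phi(f^{1/2}e_\lambda f^{1/2}\Phi_k)\to\Tr_\phi(f\Phi_k)$. Both rest on the strict lower semicontinuity/normality of the canonical trace on $\End_F(X)$ from \cite{LN} and on the fact that $\Phi_k$ commutes with $F$; the ancillary Gram-decomposition claim is routine but has to be recorded.
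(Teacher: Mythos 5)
Your proof is correct and follows essentially the same route as the paper: establish the homogeneous identity $\Tr_\phi(xx^*\Phi_k)=e^{k\beta}\tau(xx^*)$ via Lemma~\ref{spatial trace} and the KMS condition, promote it to $(F_k)_+$ by lower semicontinuity and density of $A_kA_k^*$, and then pass from $F_k$ to $F$ with an approximate unit of $F_k$ and lower semicontinuity of $\Tr_\phi$. The only cosmetic differences are that the paper takes $\psi_\lambda f\psi_\lambda$ where you take $f^{1/2}e_\lambda f^{1/2}$ (traced identically by traciality), uses $\liminf$ plus strict lower semicontinuity rather than your domination-plus-lower-semicontinuity squeeze to get the limit, and in the full spectral subspaces case simply observes $F_k=F$ so that $f$ is already in $F_k$ rather than running the approximate-unit argument again; your Gram-matrix aside is also dispensable, since polarization already makes the cone generated by $\{xx^*:x\in A_k\}$ span a dense subspace of $F_k$.
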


\begin{proof}
Consider first $f=xx^*$, $x\in A_k$. Then $f\Phi_k=\Theta_{x,x}$ and
hence
$$
\Tr_\phi(f\Phi_k)=\phi(x^*x)=e^{k\beta}\phi(xx^*)=e^{k\beta}\tau(f).
$$
Therefore $\Tr_\phi(f\Phi_k)=e^{k\beta}\tau(f)$ if $f$ is a finite sum
of elements of the form $xx^*$, $x\in A_k$. Since both
$\Tr_\phi(\cdot\,\Phi_k)$ and $\tau$ are lower semicontinuous traces on
$F$, we conclude that, as $A_kA_k^*$ is a dense ideal in~$F_k$,
$\Tr_\phi(f\Phi_k)=\tau(f)$ for any $f\in F_k$, $f\ge0$. Thus if
$F_k=F$ for all $k\in\Z$ we get equality for all $f\ge0$ and $k$.

In the more general situation consider the ideal
$F_k=\overline{A_kA_k^*}$ in $F$. Choose an approximate unit
$\{\psi_\lambda\}_\lambda$ for $F_k$. Since $A_kA_k^*A_k$ is dense in
$A_k$, we have $\psi_\lambda x\to x$ for any $x\in X_k$. Hence
$\psi_\lambda f\psi_\lambda$ converges strongly to the action of~$f$ on
$X_k$ for any $f\in F$. Since $\Tr_\phi$ is strictly lower
semicontinuous, for $f\ge0$ we therefore get
$$
\Tr_\phi(f\Phi_k)\le\liminf_\lambda \Tr_\phi(\psi_\lambda f\psi_\lambda\Phi_k)
=\liminf_\lambda e^{k\beta}\tau(\psi_\lambda f\psi_\lambda)=
\liminf_\lambda e^{k\beta}\tau(f^{1/2}\psi_\lambda^2 f^{1/2})
\le e^{k\beta}\tau(f).
$$
\end{proof}

\noindent {\bf Remark}. As the proof shows we do not need compactness
of the spectral projections $\Phi_k$, only the strong convergence of
$\psi_\lambda f\psi_\lambda$ to $f$ on $X_k$ and strict lower
semicontinuity of $\Tr_\phi$. If the SSA holds then $\psi_\lambda
f\psi_\lambda\to f$ on $X_k$ in norm.

\subsection {The spectral flow}

Our reference for Breuer-Fredholm theory and semifinite spectral flow is \cite{BCPRSW}. We
recall from Section 6 of that paper
that if $\cn$ is a semifinite von Neumann algebra with faithful
normal semifinite trace $\tau$ and $\D_1$, $\D_2$ are closed
self-adjoint operators affiliated with $\cn$ which differ by a bounded operator and
whose spectral
projections $P_1=\chi_{[0,+\infty)}(\D_1)$ and
$P_2=\chi_{[0,+\infty)}(\D_2)$ are such that the operator $P_1P_2\in P_1\cn P_2$ is
Breuer-Fredholm, then the spectral flow is defined by
$$
sf(\D_1,\D_2)=\Ind_\tau(P_1P_2).
$$
In the case when $P_1$ and $P_2$ are finite we clearly have
$
sf(\D_1,\D_2)=\tau(P_2)-\tau(P_1).
$

Now let $A$, $\HH$, $\cn$ be as in the previous Subsection. The
unbounded operator $\D$ on $X$, introduced in Subsection~\ref{ss:KK},
extends to a closed self-adjoint operator on $\HH$, which we still
denote by $\D$. Put $\s_t(x)=e^{it\D}xe^{-it\D}$ for $x\in\cn$. The
action of $\T$ on $X$ extends to a unitary representation of $\T$ on
$\HH$, namely, $t\mapsto e^{it\D}$. We want to define a map from
$K_0^\T(M(F,A),F)$ to the representation ring of the circle which we
will call the equivariant spectral flow. Roughly speaking it will
compute the spectral flow between the operators $vv^*(\D\otimes1)$ and
$v(\D\otimes1)v^*$ on invariant subspaces for the $\T$-action. However,
if $\phi$ is a weight, even the restriction of the above operators to
an invariant subspace may not be enough to get a well-defined spectral
flow. So we have to pay attention to domain issues.

\begin{lemma}\label{lm:doms} Let $A$ be a $C^*$-algebra and $\phi$ a weight on $A$ as above. 
For every $n\in\mathbb N$ the dense subalgebra $\Mn(\dom(\phi)^\sim)$
of $\Mn(A^\sim)$ is closed under the holomorphic functional calculus.
\end{lemma}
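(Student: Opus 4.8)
The plan is to exhibit the subalgebra $\Mn(\dom(\phi)^\sim)$ as the smooth (domain of all powers of a derivation) subalgebra attached to the KMS data, and then invoke the standard fact that such subalgebras are stable under the holomorphic functional calculus. Concretely, I would first recall that for a faithful KMS$_\beta$ weight the modular automorphism group $\s^\phi=\s_{-\beta\,\cdot}$ acts on $A$, and that $\dom(\phi)$ is a $\s$-invariant dense ideal. The key observation is that $\dom(\phi)$ is not merely any dense subalgebra but is, up to a routine argument, the intersection of $A$ with a collection of domains of closed operators, so a Neumann-series / resolvent argument applies. So the first step is to pin down the correct description: an element $a\in A^\sim$ lies in $\dom(\phi)^\sim$ iff $a-\lambda 1$ (for the scalar part $\lambda$) lies in $\dom(\phi)$, and $\dom(\phi)=(\dom(\phi)^{1/2})^*\dom(\phi)^{1/2}$ is already closed under multiplication (stated in the excerpt) and under the adjoint.

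The second step is the actual spectral-stability argument. Let $a\in\Mn(\dom(\phi)^\sim)$ be invertible in $\Mn(A^\sim)$; I must show $a^{-1}\in\Mn(\dom(\phi)^\sim)$. Reduce first to $n=1$ by noting that $\dom(\phi)$ passing to matrices is harmless: $\Mn(\dom(\phi))$ is the dom of $\Tr\otimes\phi$ on $\Mn(A)$, which is again a KMS weight for $\iota\otimes\s$ on $\Mn(A)$, so it suffices to treat a single (unital) algebra. Then write $a=\lambda(1+x)$ with $\lambda\in\Cx^\times$ and $x\in\dom(\phi)$ (the non-unital part); actually one wants instead to use that $a$ invertible in $A^\sim$ means $a=\mu 1 + b$ with $\mu\ne 0$, $b\in\dom(\phi)$, and $a^{-1}=\mu^{-1}1 + c$ for some $c\in A$; I must show $c\in\dom(\phi)$. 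Since $\dom(\phi)$ is a (two-sided) dense ideal in $A$, from $a^{-1}=\mu^{-1}(1 - a^{-1}b)$ we get $c = a^{-1} - \mu^{-1}1 = -\mu^{-1}a^{-1}b \in A\cdot\dom(\phi)\subset\dom(\phi)$, using that $a^{-1}\in A^\sim$ and $\dom(\phi)$ is an ideal in $A$, hence absorbs multiplication by $A^\sim$. This is the heart of the matter: \emph{any} dense ideal of a unital (or unitized) $C^*$-algebra is automatically invariant under the holomorphic functional calculus, because the defect $a^{-1}-\mu^{-1}1$ of the inverse from the scalar part always lands back in the ideal.

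The main obstacle — and the only genuine point to verify carefully — is therefore the claim that $\dom(\phi)$ is a two-sided ideal of $A$, i.e.\ $A\cdot\dom(\phi)\subset\dom(\phi)$ and $\dom(\phi)\cdot A\subset\dom(\phi)$. For a trace this is classical (Pedersen's ideal-type arguments); for a KMS weight one uses the identity $\phi(aa^*)=\phi(\s_{i\beta/2}(a)^*\s_{i\beta/2}(a))$ together with the polarization identity and the fact that $\dom(\phi)^{1/2}$ is a left ideal: for $b\in A$ and $a\in\dom(\phi)^{1/2}$ one has $ba\in\dom(\phi)^{1/2}$ since $(ba)^*(ba)=a^*b^*ba\le\|b\|^2 a^*a$, and lower semicontinuity of $\phi$ on positives gives the estimate $\phi((ba)^*(ba))\le\|b\|^2\phi(a^*a)<\infty$; combined with $\dom(\phi)=(\dom(\phi)^{1/2})^*\dom(\phi)^{1/2}$ this yields both one-sided ideal properties, hence $\dom(\phi)$ is a two-sided ideal. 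Once this is in hand the rest is the short computation above, and passing back from $n=1$ to general $n$ is immediate since $\Mn(\dom(\phi))=\dom(\Tr\otimes\phi)$ is then a dense two-sided ideal of $\Mn(A)$ by the same reasoning. I would present the argument in this order: (1) reduce to $n=1$; (2) prove $\dom(\phi)$ is a dense two-sided ideal of $A$ using the KMS relation and lower semicontinuity; (3) deduce spectral invariance of $\dom(\phi)^\sim$ in $A^\sim$ from the ideal property; (4) conclude the matrix statement.
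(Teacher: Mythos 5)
There is a genuine gap in your step (2), and it propagates. You claim $\dom(\phi)$ is a two-sided ideal of $A$ and offer as proof that $\dom(\phi)^{1/2}$ is a left ideal plus the factorization $\dom(\phi)=(\dom(\phi)^{1/2})^*\dom(\phi)^{1/2}$. But this does not yield the ideal property. Writing $N=\dom(\phi)^{1/2}$, left-idealness gives $AN\subset N$ (equivalently $N^*A\subset N^*$), but \emph{not} $NA\subset N$. Hence for $a^*b\in N^*N$ and $c\in A$, the products $a^*bc=a^*(bc)$ and $ca^*b=(ac^*)^*b$ involve $bc, ac^*\in NA$, which need not lie in $N$, and there is no reason for $a^*bc$ or $ca^*b$ to lie in $N^*N$. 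Indeed, for a genuinely non-tracial KMS weight, $\dom(\phi)$ is a hereditary $*$-subalgebra but generally fails to be a one- or two-sided ideal; your argument would prove it is an ideal for every weight, which is false. (When $\phi$ is a trace, $N$ is a two-sided ideal and your argument does work -- but that is precisely the case excluded here.) Once step (2) fails, the one-line inverse computation $c=-\mu^{-1}a^{-1}b$ no longer lands in $\dom(\phi)$, and the proof collapses.

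What survives, and what the paper actually uses, is the weaker ``sandwich'' inclusion $\dom(\phi)\cdot A^\sim\cdot\dom(\phi)\subset\dom(\phi)$ (since $\dom(\phi)\subset N\cap N^*$ and $N^*\!\cdot A\cdot N\subset N^*N$). This forces a different algebraic identity: one writes $a=b+c$ with $b$ scalar and $c\in\Mn(\dom(\phi))$, sets $c_z=c(z-b)^{-1}$, and uses
$(z-a)^{-1}-(z-b)^{-1}=(z-b)^{-1}\bigl(c_z+c_z(1-c_z)^{-1}c_z\bigr)$,
in which the problematic resolvent $(1-c_z)^{-1}$ is \emph{sandwiched} between two $\dom(\phi)$-valued factors. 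There is then a second issue you do not address: since $\dom(\phi)$ carries no completeness that makes it closed under vector-valued integration, it is not enough to know $(z-a)^{-1}-(z-b)^{-1}\in\Mn(\dom(\phi))$ pointwise in $z$; one must show the contour integral itself lies in $\Mn(\dom(\phi))$. The paper handles this by introducing a class $\mathcal{G}$ of functions of the form $z\mapsto d_z^*e_z$ with uniform $\phi$-bounds and using polarization together with lower semicontinuity of $\phi$ to control the integral. Both the sandwich identity and the integrability control are essential and are missing from your plan; closure under holomorphic functional calculus does not follow from spectral invariance alone for a merely algebraic dense subalgebra.
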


\begin{proof}
In order to prove the Lemma it suffices to show that if $\Gamma$ is a
closed smooth curve in $\C$, $a\in\Mn(\dom(\phi)^\sim)$ has spectrum
(as an element of $\Mn(A^\sim)$) which does not intersect $\Gamma$ then
for any continuous function $f\colon\Gamma\to\C$ the integral
$\int_\Gamma f(z)(z-a)^{-1}|dz|$ defines an element in
$\Mn(\dom(\phi)^\sim)$. Let $b\in\Mn(\C)$ be such that
$c:=a-b\in\Mn(\dom(\phi))$. Then the spectrum of $b$ is contained in
that of~$a$, so we just have to show that
$$
\int_\Gamma f(z)\big((z-a)^{-1}-\big(z-b)^{-1}\big)|dz|\in\Mn(\dom(\phi)).
$$

For this observe that if $[0,1]\ni t\mapsto a_t,b_t\in\dom(\phi)^{1/2}$
are two continuous maps such that the functions $t\mapsto
\phi(a_t^*a_t),\phi(b_t^*b_t)$ are bounded, then $\int^1_0a_t^*b_tdt\in
\dom(\phi)$. Indeed, by the polarization identity it is enough to
consider the case $a_t=b_t$, and then the claim follows from lower
semicontinuity. Observe also that for $d\in\Mn(A)$ we have
$d\in\Mn(\dom(\phi)^{1/2})$ if and only if
$(\phi\otimes\Tr)(d^*d)<\infty$. Denote by $\mathcal G$ the class of
continuous functions $\Gamma\to\Mn(A)$ which are finite sums of
functions of the form $z\mapsto d_z^*e_z$ such that $d_z,e_z\in\Mn(A)$
depend continuously on $z$ and the functions
$z\mapsto(\phi\otimes\Tr)(d_z^*d_z),(\phi\otimes\Tr)(e_z^*e_z)$ are
bounded. The integral of any function in $\mathcal G$ defines an
element of $\Mn(\dom(\phi))$. Therefore it suffices to show that the
function $z\mapsto (z-a)^{-1}-\big(z-b)^{-1}$ is in $\mathcal G$.

The class $\mathcal G$ contains constant $\Mn(\dom(\phi))$-valued
functions and is stable under multiplication (from either side) by
continuous $\Mn(\C)$-valued functions. In particular, the function
$$
z\mapsto c_z:=c(z-b)^{-1}
$$
is in $\mathcal G$. Furthermore, if $f_1,f_2\in\mathcal G$ and
$f\colon\Gamma\to\Mn(A^\sim)$ is continuous then $f_1ff_2\in\mathcal
G$. The identities
$$
(z-a)^{-1}-\big(z-b)^{-1}=(z-b)^{-1}\big((1-c_z)^{-1}-1\big)
=(z-b)^{-1}\big(c_z+c_z(1-c_z)^{-1}c_z\big)
$$
show then that $(z-a)^{-1}-\big(z-b)^{-1}$ is indeed in $\mathcal G$.
\end{proof}

Observe next that if $U\colon\T\to \B(\HH_U)$ is a finite dimensional
unitary representation then any $\s\otimes\Ad U$ invariant element is a
finite sum of homogeneous components with respect to $\s\otimes\iota$.
So to deal with equivariant $K$-theory of $A$ the following algebra is
enough.

\begin{defn}
Denote by $\A$ the algebra consisting of finite sums of
$\s$-homogeneous elements in the domain $\dom(\phi)$ of $\phi$. We also
put $\F=\A\cap F=\dom(\tau)$.
\end{defn}

We next turn to equivariant $K$-theory of the mapping cone.

\begin{lemma} \label{lem:rep}
Every class in $K^\T_0(M(F,A))$ has a representative $v$ such that
$v\in(\A^\sim\otimes \B(\HH_U))^{\s\otimes\Ad U}$, $vv^*$ and $v^*v$
are in $\F^\sim\otimes \B(\HH_U)$, and $vv^*=v^*v$ modulo $\F\otimes
\B(\HH_U)$, where $U\colon \T\to \B(\HH_U)$ is a finite dimensional
unitary representation.
\end{lemma}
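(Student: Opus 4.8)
The plan is to start from the bijection, described earlier in the excerpt, between $K^\T_0(M(F,A))$ and $V^\T(F,A)/\!\!\sim$, together with the $\RR_\T$-module structure; this already gives a representative $v\in A^\sim\otimes\B(\HH_U)$ which is $\s\otimes\Ad U$ invariant with $vv^*,v^*v\in F^\sim\otimes\B(\HH_U)$. So only two things need to be arranged: that $v$ may be taken to have entries (after the unital splitting $A^\sim=A\oplus\C$) in the dense subalgebra $\dom(\phi)$, i.e.\ in $\A^\sim$; and that the source and range projections, and their difference, lie in the smaller set $\F^\sim\otimes\B(\HH_U)$, $\F\otimes\B(\HH_U)$. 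For the first point I would invoke Lemma~\ref{lm:doms}: $\Mn(\dom(\phi)^\sim)$ is closed under holomorphic functional calculus in $\Mn(A^\sim)$, hence it is a dense \emph{spectral} subalgebra, and so the inclusion induces an isomorphism on $K_0$. The only wrinkle is that we need a \emph{$\T$-equivariant} spectral subalgebra statement; but $\A^\sim$ is by construction $\s$-invariant (it is the $\s$-homogeneous elements of $\dom(\phi)$, unitised), and since the circle is compact a standard averaging argument promotes the ordinary density/spectral-invariance to the equivariant level. Concretely, given an invariant $v$ over $A^\sim$, approximate it in norm by an element $w$ over $\A^\sim$, average $w$ over $\T$ against the characters occurring in $U$ to make it $\s\otimes\Ad U$ invariant while staying in $\A^\sim\otimes\B(\HH_U)$ (the relevant homogeneous components of elements of $\dom(\phi)$ are again in $\dom(\phi)$, because $\dom(\phi)$ is $\s$-invariant), and then for $w$ close enough to $v$ the partial-isometry data of $v$ can be reconstructed from $w$ by holomorphic functional calculus applied to $w^*w$ and $ww^*$: polar-decompose the invertible-modulo-small perturbation.

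The second point is where the real work is. After replacing $v$ by a nearby invariant element of $\A^\sim\otimes\B(\HH_U)$ whose polar part is again an invariant partial isometry (using the Lemma to stay inside $\A^\sim$ under the functional calculus defining the polar decomposition), the projections $p=vv^*$ and $q=v^*v$ live in $F^\sim\otimes\B(\HH_U)$, but a priori only there, not in $\F^\sim\otimes\B(\HH_U)$. Now $\F=\A\cap F=\dom(\tau)$, so the assertion ``$p\in\F^\sim\otimes\B(\HH_U)$'' means: writing $p=p'+(\text{scalar part})$ with $p'\in F\otimes\B(\HH_U)$, we need $p'\in\dom(\tau)\otimes\B(\HH_U)$, equivalently $(\tau\otimes\Tr)(p'{}^*p')<\infty$, equivalently $p'$ has finite trace-square. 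But by construction $p'=v v^*-(\text{scalar})$ and the \emph{non-scalar} part of $vv^*$ is $\Phi$-applied to products of entries of $v$ with entries of $v^*$, i.e.\ sums of terms $x y^*$ with $x,y$ entries of $v$; since the entries lie in $\A=\dom(\phi)$-homogeneous elements, each $xy^*\in\dom(\phi)$ (because $\dom(\phi)=(\dom(\phi)^{1/2})^*\dom(\phi)^{1/2}$ and homogeneous elements of $\dom(\phi)$ lie in $\dom(\phi)^{1/2}$), hence is in $\dom(\phi)\cap F=\dom(\tau)=\F$. The same argument applies to $v^*v$ and, for the last clause, to $vv^*-v^*v$, which on the nose is a sum of such $xy^*$ terms with no scalar part. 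Thus the finiteness is automatic \emph{once the entries of $v$ are in $\A$}, which is exactly what the first step secured.

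The main obstacle, then, is making the first step genuinely equivariant: ordinary spectral invariance plus compactness of $\T$ gives that $\A^\sim\otimes\B(\HH_U)$ is a spectral subalgebra of $A^\sim\otimes\B(\HH_U)$, but to conclude that \emph{every class} in $K^\T_0(M)$ has a representative over $\A^\sim$ one must either (a) run the six-term/Putnam picture directly over the smaller algebra, checking that $V^\T$ defined with entries in $\A^\sim$ surjects onto $K^\T_0(M)$, or (b) perturb a given representative. Approach (b) is cleaner: take $v\in V^\T(F,A)\otimes\B(\HH_U)$; choose $w\in\A^\sim\otimes\B(\HH_U)$, $\s\otimes\Ad U$ invariant, with $\|w-v\|<\tfrac13$; then $\|w^*w-q\|<1$ so $w^*w$ is invertible in $q(A^\sim\otimes\B(\HH_U))q\oplus(1-q)(\cdots)(1-q)$ and, crucially, $w^*w\in\A^\sim\otimes\B(\HH_U)$, whence by Lemma~\ref{lm:doms} the projection $\tilde q:=\chi_{(0,\infty)}(w^*w)$ (a holomorphic-functional-calculus expression in $w^*w$) lies in $\F^\sim\otimes\B(\HH_U)$; set $\tilde v:=w(w^*w)^{-1/2}\tilde q$, again in $\A^\sim\otimes\B(\HH_U)$ by the same Lemma; check $\tilde v$ is an invariant partial isometry homotopic to $v$ (the homotopy $v_t=(1-t)v+tw$ followed by polar decomposition stays in $V^\T$ since $v_t^*v_t$ stays invertible on the relevant corner), and that $\tilde v\tilde v^*,\tilde v^*\tilde v\in\F^\sim\otimes\B(\HH_U)$ by the $xy^*$ argument above. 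I expect verifying that these manipulations preserve invariance and the homotopy class — rather than any single estimate — to be the part that needs the most care, but each piece is routine given Lemma~\ref{lm:doms} and the compactness of $\T$.
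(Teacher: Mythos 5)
There is a genuine gap at the final step of your argument. You assert that $vv^*-v^*v$ ``on the nose is a sum of such $xy^*$ terms with no scalar part,'' and conclude it lies in $\F\otimes\B(\HH_U)$ automatically. This is false. Writing $v=v_0+c$ with $v_0$ over $A$ and $c\in\C\otimes\B(\HH_U)\cong\B(\HH_U)$ the scalar part, the scalar part of $vv^*-v^*v$ is $cc^*-c^*c$. Since $c$ is merely a partial isometry in $\B(\HH_U)$, one only knows that $cc^*$ and $c^*c$ are \emph{Murray--von Neumann equivalent} (equivalently, have the same trace), not that they are equal; the matrix unit $c=e_{12}$ in $\Mat_2(\C)$ has $cc^*=e_{11}\neq e_{22}=c^*c$. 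So the last clause of the Lemma is not automatic once the entries of $v$ lie in $\A^\sim$: it requires a further modification of $v$.

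The paper's proof agrees with your first step --- Lemma~\ref{lm:doms} together with Putnam's description (and equivariant averaging) produces an invariant representative $v$ over $\A^\sim$ with $vv^*,v^*v\in\F^\sim\otimes\B(\HH_U)$ --- but then it explicitly addresses the point you skipped. One observes that under the quotient $(\F^\sim\otimes\B(\HH_U))/(\F\otimes\B(\HH_U))\cong\B(\HH_U)$ the images of $vv^*$ and $v^*v$ are equivalent projections in $\B(\HH_U)^{\Ad U}$; one then lifts a unitary implementing this equivalence to a $\s\otimes\Ad U$-invariant unitary $u\in\F^\sim\otimes\B(\HH_U)$, so that $uvv^*u^*=v^*v$ modulo $\F\otimes\B(\HH_U)$, and finally invokes Putnam's result that $[uv]=[v]$ in $K_0^\T(M(F,A))$. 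Replacing $v$ by $uv$ gives the claimed representative. Your proposal needs exactly this conjugation step; without it the third required property may simply fail for your $\tilde v$.
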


\begin{proof}
By Lemma~\ref{lm:doms} and Putnam's description of $K$-theory of the
mapping cone~\cite{Put} we first conclude that every class has a
representative $v$ such that $v\in(\A^\sim\otimes
\B(\HH_U))^{\s\otimes\Ad U}$  and $vv^*,v^*v\in\F^\sim\otimes
\B(\HH_U)$. The images of the projections $vv^*$ and $v^*v$ in
$\B(\HH_U)^{\Ad U}$ under the isomorphism $$(\F^\sim\otimes
\B(\HH_U))/(\F\otimes \B(\HH_U))\cong \B(\HH_U)$$ are equivalent, so
there exists a $\s\otimes\Ad U$-invariant unitary $u\in\F^\sim\otimes
\B(\HH_U)$ such that $uvv^*u=v^*v$ modulo $\F\otimes \B(\HH_U)$. It
remains to recall~\cite{Put} that the classes of $v$ and $uv$ coincide,
so that $uv$ is the required representative.
\end{proof}

We are now ready to define the equivariant spectral flow. First
consider homogeneous subspaces. Let $U$ and $v$ be as in the above Lemma.
Let $\Psi_n\colon \HH_U\to\HH_U$, resp.
$Q_n\colon\HH\otimes\HH_U\to\HH\otimes\HH_U$, be the projection onto
the $\chi^n$-homogeneous  component, so that
$Q_n=\sum_k\Phi_{n-k}\otimes\Psi_k$. We then define
$$
sf_n(v)=(\Tr_\phi\otimes\Tr)((v^*v-vv^*)Q_n(P\otimes1))\in\Rl,
$$
where $P=\chi_{[0,+\infty)}(\D)=\sum_{k\ge0}\Phi_k$. Observe that this
quantity is finite by Lemma~\ref{lemma:not-too-big}, since
$v^*v-vv^*\in\F\otimes \B(\HH_U)$ by assumption and $\dim\HH_U$ is
finite.

\begin{lemma} \label{lem:isoflow}
The value $sf_n(v)$ depends only on the class of $v$ in
$K^\T_0(M(F,A))$.
\end{lemma}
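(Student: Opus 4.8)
The plan is to show $sf_n$ is invariant under the two moves generating the equivalence relation on $V^\T(F,A)$: adding an invariant projection $p\in\F^\sim\otimes\B(\HH')$ (stabilization) and homotopy through invariant partial isometries. For stabilization, if we replace $v$ by $v\oplus p$, the extra contribution to $sf_n$ is $(\Tr_\phi\otimes\Tr)((p-p)Q_n(P\otimes1))=0$ since $p$ is already a projection equal to itself, so $sf_n(v\oplus p)=sf_n(v)$ with no convergence issue (the relevant operator vanishes identically on the new summand). So the substance is homotopy invariance.

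For homotopy invariance, let $t\mapsto v_t$ be a norm-continuous path of $\s\otimes\Ad U$-invariant partial isometries with $v_t^*v_t,v_tv_t^*\in\F^\sim\otimes\B(\HH_U)$. First I would reduce to the smooth case by a standard approximation argument, then aim to show $t\mapsto sf_n(v_t)$ is differentiable with zero derivative. Writing $e_t=v_t^*v_t-v_tv_t^*\in\F\otimes\B(\HH_U)$, the quantity is $(\Tr_\phi\otimes\Tr)(e_tQ_n(P\otimes1))$. Differentiating, $\dot e_t=\dot v_t^*v_t+v_t^*\dot v_t-\dot v_tv_t^*-v_t\dot v_t^*$, and one must check $\dot e_t\in\F\otimes\B(\HH_U)$ so that Lemma~\ref{lemma:not-too-big} again guarantees finiteness, and that the trace can be differentiated under the integral/limit (uniform boundedness on the compact parameter interval). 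The key algebraic point is that $e_t$ lies in $\F\otimes\B(\HH_U)$ and $Q_n(P\otimes1)$ commutes with $\F^\sim\otimes\B(\HH_U)$ modulo controllable terms; more precisely, since $v_tv_t^*$ and $v_t^*v_t$ are projections, differentiating $v_tv_t^*v_tv_t^*=v_tv_t^*$ etc.\ gives relations that let one rewrite $\dot e_t$ in a form whose trace against $Q_n(P\otimes1)$ telescopes to zero, using only the tracial property of $\Tr_\phi\otimes\Tr$ and that $Q_n$, $P\otimes1$ are projections commuting with $\Phi_0$-type operators. Concretely, I expect the derivative to take the shape $(\Tr_\phi\otimes\Tr)([x_t,y_t]Q_n(P\otimes1))$ for suitable $x_t,y_t$ with one factor in $\F^\sim\otimes\B(\HH_U)$, and then cyclicity of the trace kills it provided $Q_n(P\otimes1)$ commutes with that factor—which it does on the nose for $Q_n$ (an invariant element commutes with the spectral projections of the $\T$-action) though one must be slightly careful with $P\otimes1$, handling the boundary term $\Phi_0$ using \eqref{tr}.

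The main obstacle I anticipate is the interchange of trace and limit/derivative together with the domain bookkeeping: $\Tr_\phi$ is only \emph{strictly} lower semicontinuous and semifinite, not finite, so one cannot naively differentiate under $\Tr_\phi\otimes\Tr$. The cleanest route is probably to fix the finite-dimensional piece first—$Q_n(P\otimes1)$ restricted to the range of a single $v_t^*v_t$ plus $v_tv_t^*$ is ``not too big'' in the sense of Lemma~\ref{lemma:not-too-big}, and on the support projection $q_t=v_t^*v_t+v_tv_t^*-v_t^*v_tv_tv_t^*$ (or its homotopy-stable enlargement) the trace $\Tr_\phi$ is genuinely finite—so one works inside the reduced von Neumann algebra $q\cn q$ where $\Tr_\phi$ is a finite trace and the derivative computation is legitimate. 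An alternative, and perhaps simpler, route avoids differentiation entirely: reduce to the case $v_1=uv_0$ for a path of invariant unitaries $u\in\F^\sim\otimes\B(\HH_U)$ (which is how Putnam's equivalence is typically realized, cf.\ Lemma~\ref{lem:rep}), and then compute directly that $sf_n(uv_0)-sf_n(v_0)$ is a trace of a commutator with $Q_n(P\otimes1)$, vanishing by cyclicity since $u$ commutes with $Q_n$ and the $P\otimes1$ discrepancy is again controlled by \eqref{tr}. I would pursue this second route, falling back on the reduced-algebra differentiation argument if the homotopy cannot be put in unitary-conjugation form.
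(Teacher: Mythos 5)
Your overall intuition — that the statement should reduce to a cyclicity-of-trace argument against the projection $Q_n(P\otimes 1)$, using that the relevant differences lie in $\F\otimes\B(\HH_U)$ where traces are finite — is right, and your second route is in the spirit of the paper. But both concrete routes you sketch have genuine gaps.

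The first route (differentiation inside a reduced corner $q\cn q$) fails because $\Tr_\phi$ is \emph{not} finite on the support projection $q_t$. Since $v_t\in\A^\sim\otimes\B(\HH_U)$ (unitization!), the source and range projections each have a nonzero scalar component in $\C\otimes\B(\HH_U)$, and so e.g. $\Tr_\phi\big(v_t^*v_t\,Q_n(P\otimes 1)\big)$ involves $\Tr_\phi(\Phi_0)=\tau(1)$, which is infinite whenever $\tau$ is not a finite trace. Finiteness only holds for the \emph{difference} $v_t^*v_t-v_tv_t^*$, because that difference lies in $\F\otimes\B(\HH_U)$ where Lemma~\ref{lemma:not-too-big} applies. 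So there is no finite-trace corner in which one can legitimately differentiate.

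The second route (``reduce to $v_1=uv_0$'') is also not available: a homotopy of partial isometries cannot in general be realized by left (or two-sided) multiplication by a path of invariant unitaries over $F^\sim$. What the paper actually does is weaker and more careful. Defining the trace $\tilde\tau=(\Tr_\phi\otimes\Tr)(\cdot\,Q_n(P\otimes1))$ on $\cn^\s\otimes\B(\HH_U)^{\Ad U}$, one first conjugates $v_t\mapsto u_tv_tu_t^*$ by a path of invariant unitaries in $\F^\sim\otimes\B(\HH_U)$ so that the range projections $v_tv_t^*$ all coincide \emph{modulo} $\F\otimes\B(\HH_U)$ (not on the nose). Because by Lemma~\ref{lem:rep} one has $v_tv_t^*=v_t^*v_t$ mod $\F\otimes\B(\HH_U)$, this handles the source projections too, and the whole statement reduces to: for a path of projections $p_t\in\F^\sim\otimes\B(\HH_U)^{\Ad U}$ coinciding mod $\F\otimes\B(\HH_U)$, $\tilde\tau(p_0-p_1)=0$. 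That is then proved by a purely algebraic trick which you did not anticipate and which carries the domain bookkeeping: assuming $\|p_0-p_1\|<1$, the element $w=p_0p_1+(1-p_0)(1-p_1)$ is invertible, satisfies $p_0=wp_1w^{-1}$, and — crucially — $w-1\in\F\otimes\B(\HH_U)$ lies in the domain of $\tilde\tau$, so that $\tilde\tau(p_0-p_1)=\tilde\tau((w-1)p_1w^{-1})+\tilde\tau(p_1(w^{-1}-1))$ and cyclicity of $\tilde\tau$ on its domain makes this vanish. This device both replaces the unrealizable ``unitary-conjugation of the whole homotopy'' and resolves the domain issue your first route stumbled on.

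Also, your stabilization step is fine but slightly mis-stated: adding $p$ gives the partial isometry $v\oplus p$ whose contribution to $v^*v-vv^*$ is $p-p=0$; you should also note that since the partial isometry $p$ acts on a trivial $\T$-module (no shift in $\chi$-degree), the added block contributes nothing to any $sf_n$.
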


\begin{proof}
Denote by $\tilde\tau$ the normal semifinite trace
$(\Tr_\phi\otimes\Tr)(\cdot\,Q_n(P\otimes1))$ on $\cn^\s\otimes
\B(\HH_U)^{\Ad U}$. It suffices to show that if $v_t\in \A^\sim\otimes
\B(\HH_U)$, $t\in(0,1)$, is a continuous path of partial isometries
satisfying the properties in the formulation of
Lemma~\ref{lemma:not-too-big}, then $\tilde
\tau(v_0v_0^*-v_0^*v_0)=\tilde\tau(v_1v_1^*-v_1^*v_1)$. Since the
images of the projections $v_tv_t^*$ in $\B(\HH_U)$ are equivalent, we
can find a continuous path of $\s\otimes\Ad U$-invariant unitaries
$u_t\in \F^\sim\otimes \B(\HH_U)$ such that $v_0v_0^*=u_tv_tv_t^*u_t$
modulo $\F\otimes \B(\HH_U)$. Replacing $v_t$ by $u_tv_tu_t^*$ we may
therefore assume that the projections $v_tv_t^*$ coincide modulo
$\F\otimes \B(\HH_U)$. Then it suffices to check that if
$p_t\in\F^\sim\otimes \B(\HH_U)^{\Ad U}$, $t\in(0,1)$, is a continuous
path of projections which coincide modulo $\F\otimes \B(\HH_U)$ then
$\tilde\tau(p_0-p_1)=0$. We may assume that $\|p_0-p_1\|<1$. Consider
the invertible element $w=p_0p_1+(1-p_0)(1-p_1)\in
F^\sim\otimes\B(\HH_U)^{\Ad U}$. Then $p_0=wp_1w^{-1}$ and
$w-1\in\F\otimes\B(\HH_U)^{\Ad U}$. Hence
$$
\tilde\tau(p_0-p_1)=\tilde\tau((w-1)p_1w^{-1})+\tilde\tau(p_1(w^{-1}-1))
=\tilde\tau(p_1w^{-1}(w-1))+\tilde\tau(p_1(w^{-1}-1))=0.
$$
\end{proof}

Thus we get a well-defined map $sf_n\colon K^\T_0(M(F,A))\to\Rl$.

\begin{lemma}
For every $[v]\in K^\T_0(M(F,A))$ we have $sf_n([v])=0$ for all but a
finite number of $n\in\Z$.
\end{lemma}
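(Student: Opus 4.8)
The plan is to show that the quantity $sf_n(v)=(\Tr_\phi\otimes\Tr)((v^*v-vv^*)Q_n(P\otimes1))$ vanishes for $|n|$ large, by tracking which eigenspaces of $\D\otimes 1$ actually support the difference $v^*v-vv^*$. First I would fix a representative $v\in(\A^\sim\otimes\B(\HH_U))^{\s\otimes\Ad U}$ as in Lemma~\ref{lem:rep}, and write $v=\sum_{j}v_j$ for its finitely many homogeneous components with respect to $\s\otimes\iota$ (there are finitely many because $v$ is $\s\otimes\Ad U$-invariant and $\HH_U$ is finite dimensional). Since $v^*v-vv^*\in\F\otimes\B(\HH_U)$, the element $e:=v^*v-vv^*$ is a finite sum of homogeneous components $e=\sum_{|j|\le N}e_j$ with $e_j\in F_j'\otimes\B(\HH_U)$ for some bound $N$ depending only on $v$, where $F_j'$ denotes the $j$-th homogeneous piece of $F\otimes\B(\HH_U)$ — but in fact, since $e\in\F\otimes\B(\HH_U)\subset F\otimes\B(\HH_U)$ and $F=A_0$ is fixed under $\s$, the only surviving homogeneous component of $e$ with respect to $\s\otimes\iota$ is the degree-zero one. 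So $e$ is $\s\otimes\iota$-invariant, i.e. $e\in\F\otimes\B(\HH_U)$ commutes with $\D\otimes 1$ in the graded sense, and its homogeneity with respect to $\s\otimes\Ad U$ is what controls things.

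Next I would analyze the projection $Q_n(P\otimes1)$. We have $Q_n=\sum_k\Phi_{n-k}\otimes\Psi_k$ where $\Psi_k$ projects onto the finitely many $k$ for which $\chi^k$ occurs in $U$ — say $k$ ranges over a finite set $S_U\subset\Z$. Thus $Q_n(P\otimes1)=\sum_{k\in S_U,\,n-k\ge0}\Phi_{n-k}\otimes\Psi_k$, which is a finite sum of the rank-type pieces $\Phi_{n-k}\otimes\Psi_k$. Now $e$ is invariant under $\s\otimes\Ad U$, hence commutes with each $Q_m$; and since $e\in F\otimes\B(\HH_U)$ acts on $\HH\otimes\HH_U$ preserving each $\Phi_\ell X\otimes\HH_U$ (because $F$ acts fibrewise on the grading of $X$), I can write $(\Tr_\phi\otimes\Tr)(eQ_n(P\otimes1))=\sum_{k\in S_U,\,n-k\ge0}(\Tr_\phi\otimes\Tr)((1\otimes\Psi_k)\,e\,(\Phi_{n-k}\otimes1))$. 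By Lemma~\ref{lemma:not-too-big}, $(\Tr_\phi\otimes\Tr)((1\otimes\Psi_k)e(\Phi_{n-k}\otimes1))$ is bounded in absolute value by $e^{(n-k)\beta}\,(\tau\otimes\Tr)(|(1\otimes\Psi_k)e|)$ (after splitting $e$ into positive and negative parts), and the factor $(\tau\otimes\Tr)$ is finite since $e\in\F\otimes\B(\HH_U)=\dom(\tau)\otimes\B(\HH_U)$.

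This bound alone does not give vanishing for $n\to+\infty$ when $\beta>0$ (the exponential blows up), so the argument must instead be that the relevant $\Phi_{n-k}$ \emph{annihilates} $e$ for $|n|$ large. The key point: $e=v^*v-vv^*$ is a difference of two projections $p_1=v^*v$, $p_2=vv^*$ in $\F^\sim\otimes\B(\HH_U)$ that agree modulo $\F\otimes\B(\HH_U)$, so $e\in\F\otimes\B(\HH_U)$ lies in a fixed dense subalgebra; but more to the point, writing $v=\sum_j v_j$ with $v_j\in A_j\otimes\B(\HH_U)$, we get $v^*v=\sum_{i,j}v_i^*v_j$ and the degree-zero piece relevant here forces $i=j$, so $v^*v=\sum_j v_j^*v_j$ and similarly $vv^*=\sum_j v_jv_j^*$, whence $e=\sum_j(v_j^*v_j-v_jv_j^*)$. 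Each $v_j^*v_j\in F_j^*\cdot\text{stuff}$; acting on $X\otimes\HH_U$, the operator $v_j$ shifts the $\Z$-grading of $X$ by $j$, so $v_j^*v_j$ and $v_jv_j^*$ preserve the grading. Crucially, $v_jv_j^*\Phi_\ell=0$ unless $\ell$ lies in the (finite) set of degrees on which $v_j$ has nonzero range, and likewise for $v_j^*v_j$; since only finitely many $j$ occur and each contributes boundedly many relevant degrees $\ell$, there is a finite set $D\subset\Z$ such that $e\,\Phi_\ell\otimes 1=0$ for all $\ell\notin D$. Then $eQ_n(P\otimes1)=\sum_{k\in S_U,\,n-k\in D,\,n-k\ge0}(1\otimes\Psi_k)e(\Phi_{n-k}\otimes1)$, which is zero as soon as $n\notin D+S_U$ — a finite set. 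Hence $sf_n(v)=0$ for all but finitely many $n$.

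\medskip

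The main obstacle I anticipate is making rigorous the claim that $e=v^*v-vv^*$ has only finitely many nonzero ``$\Phi_\ell$-components'' — i.e. that there is a finite $D$ with $e(\Phi_\ell\otimes1)=0$ for $\ell\notin D$. This is not literally true for a general element of $\F\otimes\B(\HH_U)$ (a typical $f\in F$ acts nontrivially on every $\Phi_\ell X$), so the argument genuinely needs the special structure $e=\sum_j(v_j^*v_j-v_jv_j^*)$ together with the fact that each $v_j$, being a component of a mapping-cone partial isometry with $v_j\in\A\otimes\B(\HH_U)$ and $A_j\cdot A_j^*\subset F_j$, has the property that $v_jv_j^*$ is supported on $X_j$-type pieces. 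The honest version is probably: $v_jv_j^*$ and $v_j^*v_j$, as endomorphisms of $X$, are supported on $X_j$ and $X_0$ respectively — wait, that's the wrong grading bookkeeping. The correct statement, which I would need to pin down carefully, is that $v_j\colon X\otimes\HH_U\to X\otimes\HH_U$ maps $\Phi_\ell X\otimes\HH_U$ into $\Phi_{\ell+j}X\otimes\HH_U$, so $v_jv_j^*$ preserves each $\Phi_\ell X\otimes\HH_U$ and $v_j^*v_j$ does too; the finiteness of $D$ then has to come from the \emph{equivariance with respect to $\s\otimes\Ad U$}, which ties the $X$-grading shift $j$ to the $\HH_U$-grading shift via $S_U$, so only the finitely many pairs $(\ell,k)$ with $\ell+k$ and $\ell$ both in the support of a single isotypic block of $v$ contribute. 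I would work this out via the decomposition $Q_n=\sum_k\Phi_{n-k}\otimes\Psi_k$ and the identity $v_jQ_n=Q_{n}v_j$ restricted to isotypic blocks, concluding that $sf_n(v)$ only ``sees'' $n$ in a translate of the finite support set of $v$. Everything else — finiteness of the traces, interchange of $\Tr$ and finite sums, the splitting into $\pm$ parts for Lemma~\ref{lemma:not-too-big} — is routine.
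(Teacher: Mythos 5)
Your first steps are on the right track: decomposing $Q_n=\sum_{k}\Phi_{n-k}\otimes\Psi_k$ and observing that $\Psi_k$ is supported on a finite set $S_U$ (because $\HH_U$ is finite dimensional) is exactly the starting point of the paper's proof. From this it follows that $Q_n(P\otimes1)=0$ for $n$ sufficiently negative and $Q_n(P\otimes1)=Q_n$ for $n$ sufficiently positive, so only the large-positive-$n$ case needs work. You also correctly diagnose that the crude bound from Lemma~\ref{lemma:not-too-big} cannot close this case because of the factor $e^{n\beta}$.

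The gap is in how you try to handle $n$ large positive. You want to show $e(\Phi_\ell\otimes1)=0$ for all $\ell$ outside a finite set $D$, where $e=v^*v-vv^*$, and you try to deduce this from the decomposition $e=\sum_j(v_j^*v_j-v_jv_j^*)$. But each $v_jv_j^*$ and $v_j^*v_j$ lies (modulo scalars) in $\F\otimes\B(\HH_U)$, and an element of $F$ acting by left multiplication on $X$ generically has nonzero compression to \emph{every} spectral subspace $X_\ell$ --- there is nothing in homogeneity that kills the compressions for $|\ell|$ large. The equivariance $v\in(\A^\sim\otimes\B(\HH_U))^{\s\otimes\Ad U}$ only constrains the $\HH_U$-index $k$ to the finite set $S_U$; it places no constraint at all on $\ell$. (You can see this already in the Cuntz algebra: with $v=S_1$, $vv^*=S_1S_1^*$ acts nontrivially on every $\Phi_\ell X$.) So the claim ``$eQ_n(P\otimes1)=0$ for $n\notin D+S_U$'' is false, and your own worry (``this is not literally true for a general element of $\F\otimes\B(\HH_U)$'') applies with full force to your $e$. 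The paper's actual argument for large $n$ is not a support argument but a \emph{trace} argument: once $Q_n(P\otimes1)=Q_n$, set $\tilde\tau:=(\Tr_\phi\otimes\Tr)(\cdot\,Q_n)$, which is a normal semifinite trace on the invariant algebra $(\cn\otimes\B(\HH_U))^{\s\otimes\Ad U}$; write $v=w+x$ with $w\in\C\otimes\B(\HH_U)^{\Ad U}$ a scalar matrix satisfying $w^*w=ww^*$ (this is exactly what Lemma~\ref{lem:rep} and the condition $vv^*=v^*v$ mod $\F\otimes\B(\HH_U)$ provide) and $x=v-w\in\F\otimes\B(\HH_U)\subset\dom\tilde\tau$; expand $e=w^*x+x^*w+x^*x-wx^*-xw^*-xx^*$; and conclude $\tilde\tau(e)=0$ by cyclicity of $\tilde\tau$. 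That cancellation, not annihilation of $\Phi_\ell$, is the mechanism.
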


\begin{proof}
In the notation before Lemma~\ref{lem:isoflow}, we have $\Psi_k=0$ for
$|k|$ large enough. It follows that $Q_n(P\otimes1)=0$ for all $n\in\Z$
small enough and $Q_n(P\otimes1)=Q_n$ for $n$ sufficiently large.
Therefore it suffices to check that for the normal semifinite trace
$\tilde\tau=(\Tr_\phi\otimes\Tr)(\cdot\,Q_n)$ on $(\cn\otimes
\B(\HH_U))^{\s\otimes\Ad U}$ we have $\tilde\tau(vv^*-v^*v)=0$. This is
true since by assumption $v-w$ belongs to the domain of $\tilde\tau$
for an element $w\in\C\otimes\B(\HH_U)^{\Ad U}$ such that $w^*w=ww^*$.
\end{proof}


\begin{defn}\label{dfn:equiflow}
The $\T$-equivariant spectral flow is the map $sf\colon
K^\T_0(M(F,A))\to\Rl[\chi,\chi^{-1}]$ defined by
$$
sf([v])=\sum_{n\in\Z}sf_n([v])\chi^n.
$$
\end{defn}

By Theorem~\ref{mainequivariantresult} and definition of the induced
trace we may conclude that if the SSA is satisfied then the equivariant
spectral flow coincides with the composition
$$
K^\T_0(M(F,A))\xrightarrow{-\Ind_{\hat\D}}K_0^\T(F)=K_0(F)[\chi,\chi^{-1}]
\xrightarrow{\tau_*}\Rl[\chi,\chi^{-1}],
$$
at least on the elements represented by $\s\otimes \iota$ homogeneous
isometries $v$; here $\tau_*$ denotes the homomorphism $K_0(F)\to\Rl$
defined by the trace $\tau$. We shall return to this in more detail in
the next Section.

\section{Modular index pairing}

\subsection{Modular $K_1$}
In the previous Section we defined an equivariant spectral flow which
assigns to an invariant partial isometry $v\in A^\sim\otimes \B(\HH_U)$
a Laurent polynomial in $\chi$. Being evaluated at $\chi=1$ this
polynomial gives a suitably defined spectral flow from
$vv^*(\D\otimes1)$ to $v(\D\otimes1)v^*$ with respect to ${\rm
Tr}_\phi\otimes\Tr$. We would like to obtain an analytic formula for
this spectral flow. Such formulas are available under certain
summability assumptions, but as Lemma~\ref{lemma:not-too-big} shows,
even when $\phi$ is a state, the operator $|\D|^{-p}$ is not summable
in general for any $p>0$. The same Lemma suggests, however, that to
improve summability it would suffice to assign the weight $e^{-n\beta}$
to every projection $\Phi_n$. Effectively this means that we evaluate
the equivariant spectral flow at $\chi=e^{-\beta}$. This was done from
a different point of view in~\cite{CPR2}, where notions of a modular
$K_1$ group and a modular pairing were introduced. Our considerations
allow us to relate the results of~\cite{CPR2} to more conventional
constructions.

The following definition is essentially from~\cite{CPR2}, slightly
modified and extended to adapt to our current considerations.

\begin{defn}
A partial isometry in $A^\sim$ is modular if $v\s_t(v^*)$ and
$v^*\s_t(v)$ are in $(A^\sim)^\s$ for all $t\in\Rl$. By a modular
partial isometry over $A$ we mean a modular partial isometry in
$\Mn(A^\sim)=A^\sim\otimes\Mn(\C)$ for some $n\in\N$ with respect to
the action $\s\otimes\iota$.
\end{defn}

In~\cite{CPR2} only modular unitaries were considered. Observe that
every modular partial isometry $v$ over~$A$ defines a modular unitary
by
$$
u_v=\bma 1-v^*v & v^*\\ v & 1-vv^*\ema.
$$
Define the modular $K_1$ group as follows.

\begin{defn} Let $K_1(A,\s)$ be the abelian group with one
generator $[v]$ for each partial isometry $v$ over~$A$ satisfying the
modular condition and with the following relations: \bean 1)&& [v]=0\
\mbox{if}\ v\ \mbox{is over}\ F,\nno 2)&& [v]+[w]=[v\oplus w],\nno 3)&&
\mbox{if }v_t,\ t\in[0,1],\ \mbox{is a continuous path of modular
partial isometries in }\Mn(A^\sim)\nno && \mbox{then}\
[v_0]=[v_1].\eean
\end{defn}

{\bf Remarks}.  It is easy to show that $v\oplus w\sim w\oplus v$,
see~\cite{CPR2}, however the inverse of $[v]$ is not~$[v^*]$
in general. Equivalently, even though $u_v$ is a self-adjoint unitary
and hence is homotopic to the identity, such a homotopy cannot always
be chosen to consist of modular unitaries.

Observe that $\s$-homogeneous partial isometries are modular. It turns
out that they generate the whole group $K_1(A,\sigma)$. We need some
preparation to prove this.

\begin{lemma} \label{modu0}
A unitary $u\in A^\sim$ is modular if and only if there exists
a self-adjoint element $a\in F^\sim$ such that
$
uau^*\in F^\sim$ and $\sigma_t(u)=ue^{ita}$ for
$t\in\Rl.
$
\end{lemma}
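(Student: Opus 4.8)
The plan is to characterize modularity of a unitary $u$ through the structure of its one-parameter family of ``phases'' $\sigma_t(u)u^*$. First I would observe that for a unitary $u\in A^\sim$, the modular condition says precisely that $u^*\sigma_t(u)\in(A^\sim)^\sigma=F^\sim$ for all $t$ (the condition $u\sigma_t(u^*)\in F^\sim$ is the adjoint of this). Write $w_t:=u^*\sigma_t(u)\in F^\sim$; these are unitaries in $F^\sim$. Since $\sigma$ is a one-parameter group of automorphisms, a direct computation using $\sigma_s(u^*)=u^*w_s^*$ (equivalently $\sigma_s(u)=uw_s$) gives the cocycle identity $w_{s+t}=\sigma_s(w_t)\,w_s = w_s\,\sigma_{-s}(?)$; but since $w_t\in F^\sim$ and $\sigma$ is trivial on $F$, in fact $\sigma_s(w_t)=w_t$, so the identity collapses to $w_{s+t}=w_tw_s=w_sw_t$. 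Thus $t\mapsto w_t$ is a genuine (norm-continuous, by strong continuity of $\sigma$ and the fact that it lands in the abelian-generated piece) one-parameter group of unitaries in the commutative $C^*$-algebra generated by $\{w_t\}$ inside $F^\sim$.

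Next I would apply Stone's theorem / the structure of norm-continuous one-parameter unitary groups: a norm-continuous one-parameter unitary group $t\mapsto w_t$ in a unital $C^*$-algebra has the form $w_t=e^{ita}$ for a unique self-adjoint element $a$ in (the $C^*$-algebra generated by the $w_t$, hence in) $F^\sim$. This gives $\sigma_t(u)=ue^{ita}$ with $a=a^*\in F^\sim$. It remains to check $uau^*\in F^\sim$: apply $\sigma_s$ to $\sigma_t(u)=ue^{ita}$ to get $\sigma_{s+t}(u)=\sigma_s(u)\sigma_s(e^{ita})=ue^{isa}e^{is\sigma_s(a)}$... wait, more cleanly, differentiate $\sigma_t(u)=ue^{ita}$ at $t=0$ to get $\delta(u)=iua$ where $\delta$ is the generator of $\sigma$; then $\sigma_s(\delta(u))=\delta(\sigma_s(u))$ yields $\sigma_s(iua)=\delta(ue^{isa})=\delta(u)e^{isa}=iuae^{isa}$, so $\sigma_s(ua)=uae^{isa}$, i.e. $\sigma_s(uau^*)\cdot\sigma_s(u)=uae^{isa}=ua\,u^*\sigma_s(u)$, giving $\sigma_s(uau^*)=uau^*$, so $uau^*\in F^\sim$. (Alternatively, and avoiding unbounded generators: from $\sigma_t(u)=ue^{ita}$ compute $\sigma_s(\sigma_t(u))=\sigma_{s+t}(u)=ue^{i(s+t)a}$ on one hand, and $=\sigma_s(u)\sigma_s(e^{ita})=ue^{isa}\sigma_s(e^{ita})$ on the other, forcing $\sigma_s(e^{ita})=e^{-isa}e^{i(s+t)a}$; this is a family of elements of $F^\sim$ depending on $s$, but applying the argument more carefully — or simply noting $e^{ita}\in F^\sim$ so $\sigma_s(e^{ita})=e^{ita}$ — we get $e^{i(s+t)a}=e^{isa}e^{ita}$, consistent, and then $uau^* = \lim (ue^{ita}u^*-1)/(it)$; since $ue^{ita}u^*=u\sigma_t(u^*)^* = $ hmm.)

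Let me reststructure the $uau^*\in F^\sim$ step, since that is the part most likely to be fiddly. Having $w_t=u^*\sigma_t(u)=e^{ita}\in F^\sim$, I also know $\sigma_t(u)u^* = \sigma_t(uu^*) \cdot(\text{correction})$; cleanest is: $\sigma_t(u)u^*=\sigma_t(u)\,\sigma_t(u^*)\,\sigma_t(u)\cdot(\sigma_t(u^*)u^*)^{-1}$ — too baroque. Instead: $\sigma_t(u)u^* = u w_t u^* = u e^{ita}u^*$, and also the modular condition gives directly $\sigma_t(u)u^*\in(A^\sim)^\sigma = F^\sim$ (this is literally ``$u\sigma_{-t}(u^*)$'' applied with the automorphism moved); hence $ue^{ita}u^*\in F^\sim$ for all $t$, and differentiating the norm-continuous path $t\mapsto ue^{ita}u^*$ at $t=0$ (the derivative exists in norm since $t\mapsto e^{ita}$ is norm-differentiable) gives $iuau^*\in F^\sim$, so $uau^*\in F^\sim$. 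For the converse direction of the iff, given self-adjoint $a\in F^\sim$ with $uau^*\in F^\sim$ and $\sigma_t(u)=ue^{ita}$: then $u^*\sigma_t(u)=e^{ita}\in F^\sim$ and $\sigma_t(u)u^*=ue^{ita}u^*=e^{it\,uau^*}\in F^\sim$ (functional calculus applied to the self-adjoint element $uau^*\in F^\sim$), and similarly for $v^*=u^*$, so $u$ is modular.

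The main obstacle I expect is purely the bookkeeping around whether to phrase things via the (possibly unbounded) generator $\delta$ of $\sigma$ or to stay entirely within norm-continuous one-parameter groups in $F^\sim$; the latter is cleaner and I would take that route, the key enabling fact being that $w_t:=u^*\sigma_t(u)$ actually lands in the fixed-point algebra $F^\sim$ on which $\sigma$ acts trivially, which instantly converts the $\sigma$-cocycle condition into the statement that $t\mapsto w_t$ is an honest norm-continuous unitary one-parameter group, so that $w_t=e^{ita}$ with $a=a^*\in F^\sim$. Everything else is a short computation with the functional calculus.
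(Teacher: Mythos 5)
Your proof takes essentially the same route as the paper: form $w_t=u^*\sigma_t(u)$, observe that it is a norm-continuous one-parameter unitary group in $F^\sim$ (the $\sigma$-cocycle identity collapsing because $\sigma$ acts trivially on $F$), apply Stone's theorem to write $w_t=e^{ita}$ with $a=a^*\in F^\sim$, and deduce $uau^*\in F^\sim$ from the other half of the modular condition via $\sigma_t(u)u^*=ue^{ita}u^*=e^{ituau^*}\in F^\sim$. One caveat: your parenthetical that ``the condition $u\sigma_t(u^*)\in F^\sim$ is the adjoint of $u^*\sigma_t(u)\in F^\sim$'' is incorrect --- the adjoint of $u^*\sigma_t(u)$ is $\sigma_t(u^*)u$ (the same family reparametrised), whereas $u\sigma_t(u^*)=uw_t^*u^*$ is a genuinely independent requirement --- but since you later invoke that second condition separately (via $\sigma_t(u)u^*\in F^\sim$) exactly where it is needed to get $uau^*\in F^\sim$, the slip does not damage the argument.
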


\begin{proof}
Put $u_t=u^*\sigma_t(u)$. Then
$$
u_{t+s}=u^*\sigma_{t+s}(u)=u^*\sigma_t(u)\sigma_t(u^*\sigma_s(u))
=u_tu_s.
$$
Thus $\{u_t\}_t$ is a norm-continuous one-parameter group of unitary
operators in $F^\sim$. Hence there exists a self-adjoint element $a\in
F^\sim$ such that $u_t=e^{ita}$. Therefore
$$
\sigma_t(u)=ue^{ita}=e^{ituau^*}u.
$$
Since $u$ is modular, the second equality implies that $uau^*\in
F^\sim$. The converse is obvious.
\end{proof}


For an element $x\in\Mn(A^\sim)$ we denote by $x_k$ the spectral
component of $x$ with respect to $\s\otimes\iota$, so
$(\s_t\otimes\iota)(x_k)=e^{ikt}x_k$.

\begin{lemma}\label{moduper}
A partial isometry $v\in\Mn(A^\sim)$ is modular if and only if the
elements $v_k$ are partial isometries which are zero for all but a
finite number of $k$'s and the source projections $v_k^*v_k$, $k\in\Z$,
as well as the range projections $v_kv_k^*$, $k\in\Z$, are mutually
orthogonal.
\end{lemma}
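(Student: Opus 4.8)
The plan is to prove both implications by analyzing the spectral decomposition $v=\sum_k v_k$ with respect to $\s\otimes\iota$, using the characterization in Lemma~\ref{modu0} applied to the modular unitary $u_v$.

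First I would handle the ``if'' direction, which is the easier one. Suppose the $v_k$ are partial isometries, zero for all but finitely many $k$, with mutually orthogonal source projections $\{v_k^*v_k\}$ and mutually orthogonal range projections $\{v_kv_k^*\}$. Then $v^*v=\sum_k v_k^*v_k$ and $vv^*=\sum_k v_kv_k^*$ are (finite) sums of orthogonal projections, hence projections in $\Mn(F^\sim)$; in particular $v$ really is a partial isometry. Now compute $v\s_t(v^*)=\sum_{k,l}v_k\s_t(v_l^*)=\sum_{k,l}e^{-ilt}v_kv_l^*$. Because the range projections are orthogonal, $v_kv_l^*=v_kv_k^*v_lv_l^*\cdot(\text{correction})$; more precisely $v_kv_l^*= (v_kv_k^*)(v_kv_l^*)(v_lv_l^*)$ and $v_kv_k^*\perp v_lv_l^*$ forces $v_kv_l^*=0$ for $k\ne l$. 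Hence $v\s_t(v^*)=\sum_k e^{-ikt}v_kv_k^*$, which is $\s\otimes\iota$-invariant since each $v_kv_k^*\in\Mn(F^\sim)$. Symmetrically, orthogonality of the source projections gives $v^*\s_t(v)=\sum_k e^{ikt}v_k^*v_k\in(\Mn(A^\sim))^\s$. So $v$ is modular.

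For the ``only if'' direction I would pass to the modular unitary $u_v=\bma 1-v^*v & v^*\\ v& 1-vv^*\ema$ over $\Mat_{2n}(A^\sim)$. By Lemma~\ref{modu0} there is a self-adjoint $a\in\Mat_{2n}(F^\sim)$ with $u_vau_v^*\in\Mat_{2n}(F^\sim)$ and $\s_t(u_v)=u_ve^{ita}$. Diagonalize $a=\sum_{\lambda}\lambda\,e_\lambda$ with $e_\lambda$ the (finitely many, since $a\in\Mat_{2n}(F^\sim)$ has finite spectrum after subtracting its scalar part—careful: $a$ may not be finite-spectrum, but $e^{ita}$ being a group of \emph{unitaries in $F^\sim$} of the special form coming from $u_v^*\s_t(u_v)$, together with $u_v$ itself decomposing spectrally, will force the relevant spectral projections of $a$ to lie in $\Mn(F)\oplus\C$ and to be finite in number) spectral projections of $a$. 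Then $\s_t(u_v)=u_ve^{ita}$ reads, on spectral components, $(u_v)_k e_\lambda = 0$ unless $\lambda=k$, so $u_v=\sum_k (u_v)_k = \sum_\lambda u_v e_\lambda$ and each $(u_v)_\lambda=u_ve_\lambda$ is a partial isometry (a unitary times a projection), with $(u_v)_\lambda^*(u_v)_\mu = e_\lambda e_\mu=\delta_{\lambda\mu}e_\lambda$ showing orthogonal source projections; for the range projections I use $u_vau_v^*\in\Mat_{2n}(F^\sim)$, which gives that the projections $(u_v)_\lambda(u_v)_\lambda^* = u_ve_\lambda u_v^*$ are spectral projections of the $F^\sim$-element $u_vau_v^*$, hence mutually orthogonal and finite in number. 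Finally I would descend from $u_v$ to $v$ by noting that the $(1,2)$-corner of $u_v$ is $v^*$ and the corner decomposition is compatible with the spectral decomposition, so the $v_k^*$ inherit: they are partial isometries, finitely many nonzero, with orthogonal sources and orthogonal ranges; taking adjoints gives the statement for $v_k$.

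The main obstacle I anticipate is the finiteness/finite-spectrum bookkeeping around the element $a$ in Lemma~\ref{modu0}: a priori $a\in\Mat_{2n}(F^\sim)$ is merely a bounded self-adjoint element, so $e^{ita}$ need not have finite spectrum, and one must genuinely use that $u_v$ lives over the unitization of the \emph{fixed-point} corners plus the original algebra, together with $u_vau_v^*\in\Mat_{2n}(F^\sim)$, to pin down that only finitely many spectral projections $e_\lambda$ are ``seen'' by $u_v$ and that these are the ones with $\lambda\in\Z$. An alternative that sidesteps $a$ entirely is to argue directly with $v$: modularity gives $v\s_t(v^*)\in(\Mn(A^\sim))^\s$ and $v^*\s_t(v)\in(\Mn(A^\sim))^\s$ for all $t$; expanding in Fourier components in $t$ yields $v_kv_l^*=0$ and $v_k^*v_l=0$ for $k\ne l$, and then $v=\sum_k v_k$ with these relations forces each $v_k^*v_k$ to be a projection (compute $v_k^*v_kv_k^*v_k = v_k^*(v v^*) v_k$ using orthogonality to kill cross terms), hence each $v_k$ is a partial isometry; finiteness of the set of nonzero $v_k$ follows from $v\in\Mn(A^\sim)$ together with $\s$-continuity (the Fourier series of a fixed element has, after the orthogonality relations, only finitely many nonzero terms because the partial sums of orthogonal partial isometries of norm $1$ cannot converge unless eventually constant). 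I would likely present this direct argument as the cleaner route and relegate the $u_v$/Lemma~\ref{modu0} viewpoint to a remark.
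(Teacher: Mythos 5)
Your first route (via $u_v$ and Lemma~\ref{modu0}) has exactly the gap you flag, and the paper closes it with a single observation you missed: since $\sigma$ is $2\pi$-periodic, $u_v=\sigma_{2\pi}(u_v)=u_ve^{2\pi ia}$, and unitarity of $u_v$ then forces $e^{2\pi ia}=1$, so $\mathrm{spec}(a)\subset\Z$; boundedness of $a$ then makes this a \emph{finite} subset of $\Z$. That one line dispatches all the ``finite-spectrum bookkeeping'' you were worried about, and there is no need to invoke $u_vau_v^*\in\Mat_{2n}(F^\sim)$ for the finiteness at all. Also be careful with ``descend from $u_v$ to $v$ by taking corners'': this works for $k\ne0$, where $u_k=\bma 0 & v_{-k}^*\\ v_k & 0\ema$ is purely off-diagonal, but $u_0$ also carries the diagonal blocks $1-v^*v$ and $1-vv^*$, and the paper needs a separate step for $v_0$ (expand the invariant element $v^*v$ in degree, kill the cross terms $v_0^*v_k+v_k^*v_0$, and deduce that $v_0^*v_0=v^*v-w^*w$ is a projection orthogonal to $w^*w$, where $w=\sum_{k\ne0}v_k$).

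Your alternative direct route --- reading off $v_kv_l^*=v_k^*v_l=0$ for $k\ne l$ from the Fourier coefficients of $t\mapsto v\sigma_t(v^*)$ and $t\mapsto v^*\sigma_t(v)$ --- is a genuinely different argument from the paper's and is sound, with the bonus that it treats $k=0$ on the same footing as the rest. Two details need tightening. First, the identity $v_k^*v_kv_k^*v_k=v_k^*(vv^*)v_k$ only gives $(v_k^*v_k)^2\le v_k^*v_k$, i.e.\ $\|v_k\|\le1$, not that $v_k^*v_k$ is a projection; instead decompose $vv^*v=v$ by degree: orthogonality collapses the triple sum to $\sum_k v_kv_k^*v_k=\sum_k v_k$, and matching degrees yields $v_kv_k^*v_k=v_k$ for every $k$. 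Second, ``partial sums cannot converge'' is not the right mechanism, since Fourier series need not converge in norm; use Ces\`aro means, which do converge in norm by strong continuity of $\sigma$: if infinitely many $v_k$ were nonzero, orthogonality of the sources together with $\|v_k\|=1$ would force the distance from $v$ to its $N$-th Ces\`aro mean to equal $1$ for every $N$, a contradiction.
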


\begin{proof}
Consider the modular unitary $u=u_v$. If $\sigma_t(u)=ue^{ita}$ with
$a$ as in Lemma~\ref{modu0} (but now $a\in\Mat_{2n}(F^\sim)$), then
$u=ue^{2\pi i a}$. Hence the spectrum of $a$ is a finite subset of
$\Z$. Let $p_k$ be the spectral projection of $a$ corresponding to
$k\in\Z$. Then $u_k=up_k$, and hence the partial isometries $u_k$ have
mutually orthogonal sources and ranges. We clearly have
$$
u_0=\bma 1-v^*v & v^*_0\\ v_0 & 1-vv^*\ema,\ \
u_k=\bma 0 & v^*_{-k}\\ v_k & 0\ema\ \ \hbox{for}\ \ k\ne0.
$$
This implies that $v_k=0$ for all but a finite number of $k$, and the
elements $v_k$, $k\ne0$, are partial isometries with mutually
orthogonal sources and ranges. Consider $w=\sum_{k\ne0}v_k$. Then $w$ is a partial
isometry and $ww^*=\sum_{k\ne0}v_kv_k^*$, $w^*w=\sum_{k\ne0}v_k^*v_k$.
Since
$$
v^*v=v_0^*v_0+w^*w+\sum_{k\ne0}(v_0^*v_k+v_k^*v_0)
$$
is invariant, we get $v^*v=v^*_0v_0+w^*w$. Since $v^*v$ and $w^*w$ are
projections, it follows that $v_0^*v_0$ is a projection orthogonal to
$w^*w$. In other words, $v_0$ is a partial isometry with the source
projection orthogonal to $v_k^*v_k$, $k\ne0$. Similarly one checks that
the projections $v_0v_0^*$ and $v_kv_k^*$, $k\ne0$, are orthogonal.

The converse statement is straightforward.
\end{proof}

\begin{cor}
The group $K_1(A,\sigma)$ is generated by the classes of homogeneous
partial isometries.
\end{cor}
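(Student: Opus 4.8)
The plan is to deduce the corollary directly from Lemma~\ref{moduper}. Let $v\in\Mn(A^\sim)$ be any modular partial isometry, representing a class $[v]\in K_1(A,\sigma)$. By Lemma~\ref{moduper}, the spectral components $v_k$ are partial isometries, zero for all but finitely many $k$, and --- crucially --- the range projections $v_kv_k^*$ are mutually orthogonal, as are the source projections $v_k^*v_k$. The first observation is that each individual $v_k$ is itself a modular partial isometry (it is $\sigma\otimes\iota$-homogeneous, and homogeneous partial isometries are modular, as noted just before Lemma~\ref{modu0}), so $[v_k]$ is a legitimate generator of $K_1(A,\sigma)$.

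Next I would exploit the orthogonality of the sources and ranges to realise the direct sum $\bigoplus_k v_k$ as a modular partial isometry equivalent to $v$. Concretely, because the $v_k^*v_k$ are mutually orthogonal and likewise the $v_kv_k^*$, the finite sum $w:=\sum_k v_k$ is again a partial isometry with $w^*w=\sum_k v_k^*v_k$ and $ww^*=\sum_k v_kv_k^*$; moreover $w$ is modular since each summand is homogeneous and the orthogonality guarantees that $w\sigma_t(w^*)$ and $w^*\sigma_t(w)$ land in $(\Mn(A^\sim))^{\sigma\otimes\iota}$. A standard rotation/block-matrix homotopy (the same kind used in $K$-theory to identify $a\oplus b$ with $a+b$ when the relevant projections are orthogonal) then shows that $[w]=\sum_k[v_k]$ in $K_1(A,\sigma)$, using relation~2) in the definition of $K_1(A,\sigma)$ together with a path of modular partial isometries implementing the orthogonal-sum decomposition.

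Finally I would connect $v$ to $w$. Observe that $v=\sum_k v_k+(\text{off-diagonal pieces})$, but in fact the relations in the proof of Lemma~\ref{moduper} show that $v^*v=\sum_k v_k^*v_k=w^*w$ and $vv^*=\sum_k v_kv_k^*=ww^*$, so $v$ and $w$ are partial isometries with the same source and range projections. The difference $v-w$ is built from the cross terms $\sum_{k\neq l}$ (in the language of the $2n\times 2n$ unitary $u_v$, these are controlled by the spectral projections $p_k$ of the generator $a$), and a linear or rotational path $t\mapsto$ (interpolate the phases) keeping within modular partial isometries joins $v$ to $w$; hence $[v]=[w]=\sum_k[v_k]$ by relation~3). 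Since each $[v_k]$ is the class of a homogeneous partial isometry, this proves the corollary.

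The main obstacle I anticipate is verifying that the homotopy from $v$ to $\bigoplus_k v_k$ can genuinely be taken within the class of modular partial isometries --- the naive straight-line path between partial isometries need not consist of partial isometries, let alone modular ones. The cleanest route is probably to work at the level of the associated modular unitaries $u_v$ and $u_w$: by Lemma~\ref{modu0} these have the form $u e^{ita}$ with $a$ having finite spectrum in $\Z$, and one uses the spectral projections $p_k$ of $a$ to write $u_v$ as an explicit ``block-permutation times diagonal'' unitary, which can then be connected to the block-diagonal form $u_w$ through unitaries of the same type. Making this path manifestly modular at every instant --- i.e. checking the defining condition $u_t\sigma_s(u_t^*)\in(\cdot)^\sigma$ along the path --- is the step requiring genuine care; everything else is bookkeeping with the orthogonality relations supplied by Lemma~\ref{moduper}.
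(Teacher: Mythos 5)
Your overall plan is the same as the paper's --- decompose $v=\sum_k v_k$ using Lemma~\ref{moduper}, note each $v_k$ is a homogeneous modular partial isometry, and establish $[v]=\sum_k[v_k]$ via a modular homotopy --- but the execution has a concrete confusion and, more importantly, the actual homotopy (the entire technical content) is left as an ``anticipated obstacle'' rather than supplied.

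The confusion: you define $w:=\sum_k v_k$ and then in the third paragraph ``connect $v$ to $w$,'' describing $v-w$ as ``built from the cross terms.'' But $\sum_k v_k$ is precisely the spectral decomposition of $v$, so $w=v$ identically and there are no off-diagonal pieces to dispose of; that paragraph is vacuous. What you actually need to connect is the one-block element $\bma v+w & 0\\ 0 & 0\ema$ (for two modular partial isometries $v,w$ with $v^*vw^*w=vv^*ww^*=0$) to the block-diagonal element $\bma v&0\\0&w\ema$; iterating this and invoking relation (2) of the group then gives $[v]=\sum_k[v_k]$.

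The obstacle you flag --- that the interpolating path must stay modular --- has a very direct resolution that you did not find, and it is what the paper's proof consists of. With $R_t=\bma \cos t & \sin t\\ -\sin t & \cos t\ema$, the path
$$
v_t=\left(\bma 1-ww^* & 0\\ 0 & 1-ww^*\ema +R_tww^*\right)
\bma v+w & 0\\ 0 & 0\ema
\left(\bma 1-w^*w & 0\\ 0 & 1-w^*w\ema +R_{-t}w^*w\right),
\quad 0\le t\le\pi/2,
$$
interpolates between the two endpoints. Because $ww^*$ and $w^*w$ are $\s$-invariant projections (this is precisely the $t=0$ case of the modularity of $w$), the two flanking factors are $\s$-invariant unitaries; therefore each $v_t$ is a partial isometry conjugate to $\bma v+w&0\\0&0\ema$ by invariant unitaries, and it is modular because $v+w$ is (the cross terms $v\s_s(w^*)$, etc., vanish by the source/range orthogonality, e.g.\ $v\s_s(w^*)\s_s(w)v^*=vw^*wv^*=v(v^*vw^*w)v^*=0$). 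Your alternative proposal of passing to $u_v$ and manipulating block-permutation unitaries is not needed and would be considerably less direct. So: right strategy and right key lemma, but a genuine gap where the proof actually lives.
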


\begin{proof}
It suffices to observe that if $v$ and $w$ are modular partial
isometries such that $v^*vw^*w=vv^*ww^*=0$, then $[v+w]=[v]+[w]$.
Indeed, if $R_t=\bma \cos t & \sin t\\ -\sin t & \cos t\ema$, then
$$
v_t=\left(\bma 1-ww^* & 0\\ 0 & 1-ww^*\ema +R_tww^*\right)
\bma v+w & 0\\ 0 & 0\ema
\left(\bma 1-w^*w & 0\\ 0 & 1-w^*w\ema +R_{-t}w^*w\right),
$$
$0\le t\le \pi/2$, is a modular homotopy from $\bma v+w & 0\\ 0 & 0\ema$ to
$\bma v & 0\\ 0 & w\ema$.
\end{proof}

We next want to relate the group $K_1(A,\s)$ to $K_0^\T(M(F,A))$.

Recall that if $\K$ is a finite dimensional Hilbert space considered
with the trivial $\T$-module structure, we denote by $\K[n]$ the same
space with the representation $t\mapsto e^{int}$. Assume $v\in
A^\sim\otimes \B(\K)$ is a partial isometry such that $v\in
A_n^\sim\otimes \B(\K)$, so $(\s_t\otimes\iota)(v)=e^{int}v$, then the
partial isometry
$$
w_v=\begin{pmatrix}0 & v\\ 0& 0\end{pmatrix}\in A^\sim\otimes \B(\K\oplus \K[n])
$$
is $\T$-invariant, so it defines an element of $K^\T_0(M(F,A))$.
Sometimes we shall denote the class $[w_v]\in K_0^\T(M(F,A))$ by
$\ll\!\! v\!\!\gg$. Note that if $n=0$ and so $v$ itself represents an
element of $K^\T_0(M(F,A))$, there is no ambiguity in this notation as
$$
\begin{pmatrix}
0 & v\\ 0 & 0
\end{pmatrix}
\ \ \text{is homotopic to}\ \
\begin{pmatrix}
v & 0\\ 0 & 0
\end{pmatrix},
$$
and moreover, the class of $v$ can easily be shown to be zero, see
\cite[Lemma~2.2(v)]{Put}.

\begin{prop}\label{modular-homo}
The map
$$
v\mapsto \sum_k\ll\!\! v_k\!\!\gg\in K^\T_0(M(F,A))
$$
defined on modular partial isometries gives a homomorphism $T\colon
K_1(A,\sigma)\to K^\T_0(M(F,A))$.
\end{prop}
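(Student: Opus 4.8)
The plan is to check that the assignment $v\mapsto\sum_k\ll\!\! v_k\!\!\gg$ respects the three defining relations of $K_1(A,\sigma)$, so that it descends to a homomorphism on the group. By Lemma~\ref{moduper} we know that for a modular partial isometry $v$ the spectral components $v_k$ are partial isometries, all but finitely many zero, with mutually orthogonal source and range projections; in particular each $\ll\!\! v_k\!\!\gg$ is a well-defined class in $K_0^\T(M(F,A))$ (using the construction $w_{v_k}$ preceding the Proposition), so the sum on the right-hand side is a finite sum of well-defined classes and the map is at least well-defined as a function on modular partial isometries. It remains to verify the relations.

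First I would handle relation (1): if $v$ is over $F$, then $v=v_0$, so the only term is $\ll\!\! v_0\!\!\gg=[w_{v_0}]$ with $n=0$, and as noted in the paragraph just before the Proposition, the class of an invariant partial isometry over $F$ in $K_0^\T(M(F,A))$ is zero (this is the equivariant analogue of \cite[Lemma~2.2(v)]{Put}). Relation (2) is immediate: the spectral components of $v\oplus w$ are $v_k\oplus w_k$, and $\ll\!\! v_k\oplus w_k\!\!\gg=\ll\!\! v_k\!\!\gg+\ll\!\! w_k\!\!\gg$ since $w_{v_k\oplus w_k}=w_{v_k}\oplus w_{w_k}$ up to the obvious identification of the auxiliary $\T$-modules $\K[n]$, and direct sums of representing partial isometries add in $K$-theory. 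Summing over $k$ gives additivity. Relation (3), homotopy invariance, is the substantive point and I expect it to be the main obstacle: given a continuous path $v_t$, $t\in[0,1]$, of modular partial isometries in $\Mn(A^\sim)$, I must produce, for each fixed $k$, a homotopy in $V^\T(F,A)$ (in the sense used for $K_0^\T(M(F,A))$) from $w_{(v_0)_k}$ to $w_{(v_1)_k}$. The natural candidate is $t\mapsto w_{(v_t)_k}$, where $(v_t)_k=\Phi_k(v_t)=\frac{1}{2\pi}\int_0^{2\pi}e^{-ikt'}(\sigma_{t'}\otimes\iota)(v_t)\,dt'$; this depends norm-continuously on $t$, and each $(v_t)_k$ is a partial isometry by Lemma~\ref{moduper}.

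The care needed in relation (3) is twofold. One must check that the number of nonzero components $k$ is locally bounded along the path, so that only finitely many classes $\ll\!\! v_k\!\!\gg$ are involved over the whole of $[0,1]$ and the sum $\sum_k$ can be differentiated termwise along the homotopy; this follows because the spectrum of the element $a=a(t)$ from Lemma~\ref{modu0} associated to the modular unitary $u_{v_t}$ is a finite subset of $\Z$ that varies continuously, hence is locally constant, so by a compactness argument on $[0,1]$ the set of relevant $k$ is globally finite. Second, one must confirm that $(v_t)_k$ really lies in $A_{n_k}^\sim\otimes\B(\K)$ for a fixed $n_k$ (the value that indexes the auxiliary module $\K[n_k]$ in the definition of $w_{(v_t)_k}$), which is automatic since $(v_t)_k$ is homogeneous of degree $k$ by construction; thus $t\mapsto w_{(v_t)_k}$ is a genuine homotopy of $\T$-invariant partial isometries in $A^\sim\otimes\B(\K\oplus\K[k])$ with source and range projections in $F^\sim\otimes\B(\cdot)$ — the orthogonality from Lemma~\ref{moduper} guarantees the range/source projections stay in the fixed-point algebra along the path. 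Hence $\ll\!\!(v_0)_k\!\!\gg=\ll\!\!(v_1)_k\!\!\gg$ for each $k$, and summing over the (finitely many relevant) $k$ gives $T([v_0])=T([v_1])$. Combining the three verifications shows $T$ is well-defined on $K_1(A,\sigma)$, and additivity of $\ll\!\!\cdot\!\!\gg$ together with relation (2) makes it a group homomorphism.
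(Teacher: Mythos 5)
Your proposal is correct and follows essentially the same route as the paper: you check the three defining relations of $K_1(A,\sigma)$ against the component map $v\mapsto\sum_k\ll\!\! v_k\!\!\gg$, with homotopy invariance being the main point (the paper disposes of it with the single remark that homotopic elements have homotopic spectral components, and notes that each $[u]\mapsto\ll\!\! u_k\!\!\gg$ is a homomorphism). Your extra care about finiteness of the nonzero component set along a homotopy is a reasonable supplement to the paper's terse argument, though a simpler observation suffices: each $(v_t)_k$ is a partial isometry, so $\|(v_t)_k\|\in\{0,1\}$, and continuity on the connected interval $[0,1]$ forces the set $\{k : (v_t)_k\ne0\}$ to be constant in $t$.
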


\begin{proof}
Since homotopic elements have homotopic spectral components, it is
clear that the images of homotopic modular partial isometries coincide.
It follows that we have a well-defined homomorphism $T\colon
K_1(A,\sigma)\to K^\T_0(M(F,A))$; in fact, for each $k$ the
map $[u]\mapsto\ll\!\! u_k\!\!\gg$ is a homomorphism.
\end{proof}

This homomorphism makes it clear why $-[v]\ne[v^*]$ in $K_1(A,\s)$ in
general. Indeed, observe first that in the group $K_0^\T(M(F,A))$ we do
have $-[w]=[w^*]$, basically because $u_w$ is an invariant self-adjoint
unitary, hence there is a homotopy from $u_w$ to $1$ consisting of
invariant unitaries. In particular, for homogeneous $v$ as above we
have $-[w_v]=[w_v^*]$. The class $w_v^*$ is represented by
$$
\bma 0 & v^*\\ 0 & 0 \ema\in A^\sim\otimes \B(\K[n]\oplus \K),\ \
\hbox{while}\ \
w_{v^*}=\bma 0 & v^*\\ 0 & 0 \ema\in A^\sim\otimes \B(\K\oplus \K[-n]).
$$
Therefore $[w_v^*]=\chi^n[w_{v^*}]$. In other words, $-\ll\!\!
v\!\!\gg=\chi^n\ll\!\! v^*\!\!\gg$, so that $T(-[v])=\chi^nT([v^*])$.
Equivalently, we have
$$
T([u_v])=\ll\!\!v\!\!\gg+\ll\!\!v^*\!\!\gg=(1-\chi^{-n})\ll\!\!v\!\!\gg.
$$

\subsection{Modular index}
Recall that in Subsection~\ref{ss:trace} we constructed a semifinite
von Neumann algebra $\cn=\End(X)''\subset \B(\HH)$, a faithful
semifinite normal trace $\Tr_\phi$ and an operator
$\D=\sum_{k\in\Z}k\Phi_k$ on $\HH$.

We now define a new weight on $\cn$.

\begin{defn}
Consider the operator $e^{-\beta\D}=\sum_{k\in\Z}e^{-k\beta}\Phi_k$.
For $S\in\cn_+$ define
$$\phi_\D(S)={\rm Tr}_\phi(e^{-\beta\D/2} Se^{-\beta\D/2}).$$
\end{defn}

Since $e^{-\beta\D}$ is strictly positive and affiliated to $\cn$,
$\phi_\D$ is a faithful semifinite normal weight. Since
$\mbox{Tr}_\phi$ is a trace, the modular group of $\phi_\D$ is given by
$\s^{\phi_\D}_t(\cdot)=e^{-it\beta\D}\cdot e^{it\beta\D}$. The
restriction of~$\s^{\phi_\D}_t$ to~$A$ coincides with $\s_{-\beta t}$.
While $\phi_\D$ is not a trace on $\cn$, it is clearly a semifinite
normal trace on the invariant subalgebra $\cM:=\cn^\s$. The following
Lemma captures the main reason for defining $\phi_\D$.

\begin{lemma}\label{lemma:not-too-big2}
With $A,\,\sigma,\,\phi$ as above, we 
have $f(1+\D^2)^{-1/2}\in\LL^{(1,\infty)}(\cM,\phi_\D)$ if
$f\in\F=\operatorname{dom}(\phi)\cap F$.
\end{lemma}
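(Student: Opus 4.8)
The plan is to reduce the claim to a computation of the singular values (in the sense of $\phi_\D$) of the operator $f(1+\D^2)^{-1/2}$ for $f\in\F$, $f\ge0$, and to show that the relevant distribution function decays like $t^{-1}$. First I would recall that $(1+\D^2)^{-1/2}=\sum_{k\in\Z}(1+k^2)^{-1/2}\Phi_k$ and that $\D$ is affiliated to $\cM=\cn^\s$, so $f(1+\D^2)^{-1/2}$ lies in $\cM$; hence it makes sense to measure its size with $\phi_\D$. The key point is Lemma~\ref{lemma:not-too-big}: since $\phi_\D(\Phi_k\,\cdot\,\Phi_k)={\rm Tr}_\phi(e^{-k\beta}\Phi_k\,\cdot\,\Phi_k)$ on the range of $\Phi_k$, we have $\phi_\D(f\Phi_k)=e^{-k\beta}{\rm Tr}_\phi(f\Phi_k)\le e^{-k\beta}e^{k\beta}\tau(f)=\tau(f)$ for $f\in\F_+$ and all $k\in\Z$. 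Thus the "Fourier modes" of $f$ contribute, in the trace $\phi_\D$, a uniformly bounded amount $\le\tau(f)$ per mode, whereas in ${\rm Tr}_\phi$ the mode $\Phi_k$ carried weight $\sim e^{k\beta}$, which is exactly the growth that killed summability.

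Granting this, the main estimate is a weak-Schatten bound. I would consider the positive operator $T=(1+\D^2)^{-1/4}f(1+\D^2)^{-1/4}$, which has the same nonzero $\phi_\D$-spectral data as $f^{1/2}(1+\D^2)^{-1/2}f^{1/2}$ and hence the same singular value function as $|(1+\D^2)^{-1/4}f^{1/2}|^2$; controlling $f(1+\D^2)^{-1/2}$ in $\LL^{(1,\infty)}$ is equivalent to controlling this. Decomposing $T=\sum_k(1+k^2)^{-1/2}\Phi_k^{1/2}f\Phi_k^{1/2}$ (block-diagonal over the grading, with the blocks living in mutually orthogonal pieces $\Phi_k\cM\Phi_k$), the distribution function $\lambda_s(T)=\phi_\D(\chi_{(s,\infty)}(T))$ splits as a sum over $k$. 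For the $k$-th block, $(1+k^2)^{-1/2}\Phi_k f\Phi_k\le \|f\|(1+k^2)^{-1/2}\Phi_k$ and also its $\phi_\D$-trace is $\le\tau(f)$; so $\phi_\D(\chi_{(s,\infty)}$ of the $k$-th block$)$ vanishes once $(1+k^2)^{-1/2}\|f\|\le s$, i.e. for $|k|\gtrsim \|f\|/s$, and for the remaining $O(1/s)$ values of $k$ it is bounded by $\min\{\tau(f)/s,\ \tau(f)\}$ — the Markov bound using that each block has $\phi_\D$-trace $\le\tau(f)$. Summing, $\lambda_s(T)\le \sum_{|k|\le \|f\|/s}\tau(f)/s=O(1/s^2)\cdot\tau(f)$? — here I must be a little careful: the correct bookkeeping is $\int_0^N\mu_t(T)\,dt\le\sum_{|k|\lesssim ?}(\text{contribution})$, and the standard way is to estimate $\mu_t(T)$ directly: the $t$-th singular value of $T$ is, up to constants, the value of the rearrangement of the numbers $(1+k^2)^{-1/2}$, each repeated with $\phi_\D$-multiplicity $\le\tau(f)$ (times $\|f\|$), which gives $\mu_t(T)\le C\tau(f)\|f\|/t$ and hence $\sup_{N}\frac1{\log N}\int_0^N\mu_t(T)\,dt<\infty$, i.e. $T\in\LL^{(1,\infty)}(\cM,\phi_\D)$.

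The step I expect to be the main obstacle is making the singular-value / distribution-function bookkeeping rigorous in the semifinite, possibly non-$\sigma$-finite setting where $\tau(f)$ may be large and $f$ is only in a dense ideal: one must justify that the blocks $\Phi_k f\Phi_k$ can be simultaneously "diagonalised" against $\phi_\D$, that $\mu_t$ of a $\phi_\D$-block-diagonal operator is the decreasing rearrangement of the blockwise $\mu_t$'s, and that the per-mode bound $\phi_\D(f\Phi_k)\le\tau(f)$ (Lemma~\ref{lemma:not-too-big}) combines correctly with the resolvent decay $(1+k^2)^{-1/2}$ — essentially an interpolation between the $L^1$ bound (total $\phi_\D$-trace per mode $\le\tau(f)$) and the $L^\infty$ bound (operator norm of the $k$-th block $\le\|f\|(1+k^2)^{-1/2}$). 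A clean alternative, which I would fall back on if the direct estimate gets messy, is to write $f(1+\D^2)^{-1/2}=\big(f(1+\D^2)^{-1/4}\cdot?\big)$ and compare with the reference operator $(1+\D^2)^{-1/2}$ on the "compressed" algebra $p\cM p$ where $p$ is the support of $f$: one shows $(1+\D^2)^{-1/2}p\in\LL^{(1,\infty)}$ by the per-mode computation above (this is where $e^{-k\beta}e^{k\beta}$ cancels, exactly as in Lemma~\ref{lemma:not-too-big}), and then multiplies by the bounded operator $f$ (or $f^{-1}$ on its support, after a cutoff) using that $\LL^{(1,\infty)}$ is a bimodule over $\cM$. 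Either way the arithmetical heart is the cancellation in Lemma~\ref{lemma:not-too-big}, which is the reason $\phi_\D$ was introduced, and the only real work is packaging it as a weak-$\ell^1$ estimate on the sequence $\{(1+k^2)^{-1/2}\}_k$ weighted by the finite masses $\tau(f)$.
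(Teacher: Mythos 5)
The paper's own proof is one line: ``This follows immediately from Lemma~\ref{lemma:not-too-big}, since $\phi_\D(f\Phi_k)\le\phi(f)$ for $f\ge0$.'' Your proposal is essentially a filling-in of that ``immediately'', and you correctly identify both the key input (the uniform per-mode bound $\phi_\D(f\Phi_k)\le\tau(f)$, which is exactly the $e^{-k\beta}e^{k\beta}$ cancellation encoded in Lemma~\ref{lemma:not-too-big}) and the right structure (block-diagonality of $f(1+\D^2)^{-1/2}=\sum_k(1+k^2)^{-1/2}f\Phi_k$ over the $\Phi_k$, combined with the weak-$\ell^1$ decay of $(1+k^2)^{-1/2}$). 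So the approach is the same as the paper's.

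A few points to tighten. (i) The Chebyshev/distribution-function attempt, once the factor $(1+k^2)^{-1/2}$ is correctly kept in the per-block trace, gives $\lambda_s(T)\le\sum_{|k|\lesssim\|f\|/s}(1+k^2)^{-1/2}\tau(f)/s\lesssim\tau(f)\log(1/s)/s$ — a genuine logarithmic loss (not the $O(1/s^2)$ you wrote), so you are right to abandon it and pass to the rearrangement. (ii) In the rearrangement estimate, each block has plateau height $\le(1+k^2)^{-1/2}\|f\|$ but plateau width at most (trace)/(height) $\le\tau(f)/\|f\|$, so the $\|f\|$'s cancel and the correct conclusion is $\mu_t(T)\lesssim\tau(f)/t$; the extra $\|f\|$ in your $\mu_t(T)\le C\tau(f)\|f\|/t$ is extraneous and dimensionally incorrect (equivalently, $\int_0^N\mu_t(T)\,dt\le 2\tau(f)\log(N\|f\|/\tau(f))+O(1)$, so the Dixmier norm is $\lesssim\tau(f)$). (iii) The fallback argument does not close as stated: the support projection $p$ of $f$ need not lie in $\F$ — its $\tau$-trace can be infinite, e.g.\ if $f$ has unbounded inverse on its support — so $p(1+\D^2)^{-1/2}$ need not be in $\LL^{(1,\infty)}(\cM,\phi_\D)$, and cutting off at $p_\epsilon=\chi_{[\epsilon,\infty)}(f)$ leaves a remainder $f(1-p_\epsilon)(1+\D^2)^{-1/2}$ that is small in operator norm but not in the ideal. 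None of this affects your main line of argument, which is sound and is the argument the paper intends.
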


\begin{proof}
This follows immediately from Lemma~\ref{lemma:not-too-big}, since
$\phi_\D(f\Phi_k)\le\phi(f)$ for $f\ge0$.
\end{proof}

We will call the data  $(\A,\HH,\D,\cn,\phi_\D)$ the {\bf modular
spectral triple} for $(A,\s,\phi)$. It provides us with a way
to compute the spectral flow from $vv^*\D$ and $v\D v^*$ with
respect to the trace $\phi_\D$ on $\cM$ for appropriate partial
isometries in $\A$. The next Lemma justifies our definition of modular
partial isometries.

\begin{lemma}
Let $v\in\Mn(A^\sim)$ be a partial isometry such that
$vv^*,v^*v\in\Mn(F^\sim)$. Then we have
$v(Q\otimes1)v^*,v^*(Q\otimes1)v\in\Mn(\cM)$ for every spectral
projection $Q$ of $\D$ if and only if $v$ is modular.
\end{lemma}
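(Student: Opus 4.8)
The plan is to bypass the homogeneous decomposition $v=\sum_k v_k$ and argue directly with the unitary group $e^{it\D}$. I will identify $A^\sim$, $\cn$, etc.\ with their images under $\pi_\phi$, recall that on $\Mn(\cn)$ the automorphism $\sigma_s$ (meaning $\sigma_s\otimes\iota$) is implemented by $e^{is\D}\otimes1_n$, that it restricts on $\Mn(A^\sim)$ to the circle action, that $(\Mn(A^\sim))^\sigma=\Mn(F^\sim)$, and that $\Mn(\cM)=(\Mn(\cn))^\sigma$ is a von Neumann algebra, hence ultraweakly closed. Throughout I suppress the ampliation $\otimes1_n$.

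\emph{Step 1: pass from spectral projections to the group $\{e^{it\D}\}$.} The map $\cn\ni y\mapsto v(y\otimes1_n)v^*\in\Mn(\cn)$ is ultraweakly continuous, so $S:=\{y\in\cn:v(y\otimes1_n)v^*\in\Mn(\cM)\}$ is an ultraweakly closed linear subspace of $\cn$, and likewise $S':=\{y:v^*(y\otimes1_n)v\in\Mn(\cM)\}$. The spectral projections of $\D$ and the unitaries $\{e^{it\D}:t\in\Rl\}$ have the same ultraweakly closed linear span in $\cn$ (each $e^{it\D}$ is an ultraweak limit of finite linear combinations $\sum_{|k|\le N}e^{itk}\Phi_k$ of spectral projections, and each $\Phi_k$, hence each $\chi_S(\D)$, is a Fourier-type limit of combinations of the $e^{it\D}$). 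Since $S,S'$ are ultraweakly closed, the hypothesis of the lemma is therefore equivalent to $ve^{it\D}v^*\in\Mn(\cM)$ and $v^*e^{it\D}v\in\Mn(\cM)$ for all $t\in\Rl$.

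\emph{Step 2: recognize this as modularity.} Fix $t$. Using $\sigma_s(e^{it\D})=e^{it\D}$ and $e^{it\D}\sigma_s(v^*)=\sigma_{s+t}(v^*)e^{it\D}$ one obtains the identity
\[
\sigma_s\bigl(v\sigma_t(v^*)\bigr)=\sigma_s(v)\,\sigma_{s+t}(v^*)=\sigma_s(v)\,e^{it\D}\,\sigma_s(v^*)\,e^{-it\D}=\sigma_s\bigl(ve^{it\D}v^*\bigr)\,e^{-it\D}\qquad(s\in\Rl),
\]
and in particular (at $s=0$) $v\sigma_t(v^*)=ve^{it\D}v^*e^{-it\D}$. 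Hence $v\sigma_t(v^*)$ is $\sigma$-invariant if and only if $ve^{it\D}v^*$ is; and since $v\sigma_t(v^*)\in\Mn(A^\sim)$ automatically, its $\sigma$-invariance is the same as $v\sigma_t(v^*)\in\Mn(F^\sim)$. Letting $t$ vary, $ve^{it\D}v^*\in\Mn(\cM)$ for all $t$ $\iff$ $v\sigma_t(v^*)\in\Mn(F^\sim)$ for all $t$; the identical argument with $v$ replaced by $v^*$ gives $v^*e^{it\D}v\in\Mn(\cM)$ for all $t$ $\iff$ $v^*\sigma_t(v)\in\Mn(F^\sim)$ for all $t$. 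Combining with Step 1, the hypothesis of the lemma holds if and only if $v\sigma_t(v^*),v^*\sigma_t(v)\in\Mn(F^\sim)$ for all $t$, which is exactly the definition of $v$ being modular.

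\emph{Expected difficulty.} The argument is essentially formal; the two points requiring care are the ultraweak-closure step (needed because ``spectral projection of $\D$'' is a large family while modularity is stated through the one-parameter group) and the conjugation bookkeeping behind the displayed identity, in particular the trailing $e^{-it\D}$ which must cancel when rewriting the invariance condition. If one prefers to avoid any von Neumann algebraic closure argument, the alternative is to decompose $v=\sum_k v_k$ into $\sigma\otimes\iota$-homogeneous components $v_k\in\Mn(A^\sim)$, with $\Phi_jv_k=v_k\Phi_{j-k}$, expand $v(\Phi_k\otimes1_n)v^*=\sum_{p,q}(\Phi_{k+p}\otimes1_n)v_pv_q^*$, and use that the homogeneous pieces of this sum have mutually orthogonal ranges to deduce, from membership in $\Mn(\cM)$ for all $k$, that $v_pv_q^*=0$ for $p\ne q$ (and similarly $v_p^*v_q=0$), after which Lemma~\ref{moduper} identifies these relations with modularity; Steps 1--2 above are shorter.
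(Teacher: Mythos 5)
Your proof is correct, and it takes a genuinely different route from the one in the paper. The paper's proof first reduces to the unitary case by replacing $v$ with $u_v$, then for fixed $Q$ writes $\tilde\s(v\tilde Qv^*)=vv^*\tilde\s(v)\tilde Q\tilde\s(v^*)$ and rearranges using that $v^*\tilde\s(v)$ commutes with $\tilde Q$ (in the forward direction because $v^*\tilde\s(v)\in\Mn(F^\sim)$; in the converse direction it deduces this commutation from the hypothesis and concludes that $v^*\tilde\s(v)$, commuting with all spectral projections of $\D$, is $\s$-invariant). Your argument instead avoids the $u_v$ reduction entirely, and replaces the pointwise commutation bookkeeping by the one identity $\sigma_s(v\sigma_t(v^*))=\sigma_s(ve^{it\D}v^*)e^{-it\D}$, which makes the equivalence of the two invariance conditions transparent and symmetric; the price is the ultraweak-closure argument of Step~1 needed to trade the family of spectral projections for the one-parameter group. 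What each approach buys: the paper's version is more elementary (no von Neumann closure step, just algebra with one projection at a time), while yours handles both implications at once, works directly with partial isometries, and makes the structural reason for the equivalence — that $v\sigma_t(v^*)$ and $ve^{it\D}v^*$ differ by the invariant unitary $e^{-it\D}$ — explicit. Both are complete; the checks you flag (that $S$ is an ultraweakly closed linear subspace and that $\{\Phi_k\}$ and $\{e^{it\D}\}$ have the same ultraweakly closed span, using that $\D$ has spectrum in $\Z$) are indeed the only places requiring care and you have them right.
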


\begin{proof}
Replacing $v$ by $u_v$ we may assume that $v$ is unitary.
Next, suppose first that $v$ is modular. Write $\tilde\s$ for
$\s_t\otimes\iota$ and $\tilde Q$ for $Q\otimes1$. Since $\cM=\cn^\s$,
we need to show that $v\tilde Q v^*$ is $\tilde\s$-invariant. We have
$$
\tilde\s(vQv^*)=\tilde\s(v)\tilde Q\tilde\s(v^*)=vv^*\tilde\s(v)\tilde Q\tilde\s(v^*)
=v\tilde Qv^*\tilde\s(v)\tilde\s(v^*)=v\tilde Qv^*.
$$
A similar argument shows that $v^*\tilde Qv$ is invariant.

On the other hand, if
$$ v\tilde Qv^*=\tilde\s(v\tilde Qv^*)=\tilde\s(v)\tilde
Q\tilde\s(v^*),$$ then $v^*\tilde \s(v)$ commutes with $\tilde
Q=Q\otimes1$. If this is true for every spectral projection $Q$ of the
generator~$\D$ of $\s$, then $v^*\tilde \s(v)$ is
$(\s\otimes\iota)$-invariant. Similarly $v\tilde\s(v^*)$
is invariant. Hence $v$ is modular.
\end{proof}

Next we show that the spectral flow is indeed well-defined for modular
partial isometries.

\begin{lemma} \label{lemma:msf}
For a modular partial isometry $v\in A^\sim\otimes \B(\K)$ consider the
projections $$P_1=\chi_{[0,+\infty)}(vv^*(\D\otimes1))\ \ \hbox{and}\ \
P_2=\chi_{[0,+\infty)}(v(\D\otimes1)v^*).$$ Then the operator
$P_1P_2\in P_1(\cM\otimes \B(\K))P_2$ is Breuer-Fredholm and
\begin{align*}
&sf_{\phi_\D\otimes\Tr}(vv^*(\D\otimes1),v(\D\otimes1)v^*)\\
&=\sum_{k<0}\sum_{k\le n<0}e^{-\beta n}(\Tr_\phi\otimes\Tr)(v_kv_k^*(\Phi_n\otimes1))
-\sum_{k>0}\sum_{0\le n<k}e^{-\beta n}(\Tr_\phi\otimes\Tr)(v_kv_k^*(\Phi_n\otimes1)).
\end{align*}
\end{lemma}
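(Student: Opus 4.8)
The plan is to reduce everything to a finite-dimensional computation by exploiting the decomposition $v=\sum_k v_k$ guaranteed by Lemma~\ref{moduper}: the $v_k$ are partial isometries with mutually orthogonal sources and ranges, only finitely many nonzero. First I would replace $v$ by the modular unitary $u_v$ — no, actually it is cleaner to work directly with $v$, since the operators $vv^*(\D\otimes1)$ and $v(\D\otimes1)v^*$ are already self-adjoint on the ranges of $vv^*$ and $vv^*$ respectively; on the orthogonal complements both operators are taken to be $(\D\otimes1)$ itself (as in the APS/Toeplitz conventions of the earlier sections), so they differ by a bounded operator and the spectral-flow framework of \cite{BCPRSW} recalled above applies once we check Breuer-Fredholmness. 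For that, observe $v(\D\otimes1)v^* - vv^*(\D\otimes1) = v[\D\otimes1,v^*]$, which for a modular $v$ is a finite sum $\sum_k k\, v_kv_k^*$ and hence bounded and affiliated to $\cM$; so by the definition of semifinite spectral flow recalled in Subsection~4.2, $P_1P_2$ is Breuer-Fredholm in $P_1(\cM\otimes\B(\K))P_2$ and $sf_{\phi_\D\otimes\Tr}(vv^*(\D\otimes1),v(\D\otimes1)v^*)=\Ind_{\phi_\D\otimes\Tr}(P_1P_2)$.

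Next I would compute this index as $(\phi_\D\otimes\Tr)(P_2-P_1)$ on a reducing subspace where both projections are finite. Because $vv^*$ is $\T$-invariant, the spectral projections of $vv^*(\D\otimes1)$ decompose along the $\Phi_n\otimes1$, and similarly $v(\D\otimes1)v^*$ has spectrum determined by $v_k(\D\otimes1)v_k^*$ acting on the range of $v_kv_k^*$: on that piece the operator $v\D v^*$ acts as multiplication by $n$ where $\Phi_{n}$ enters through the domain shift, so in fact $v(\D\otimes1)v^*$ restricted to $v_kv_k^* X\otimes\K$ is unitarily equivalent via $v_k^*$ to $(\D\otimes1)$ on $v_k^*v_k X\otimes\K$, whose spectral subspaces are labelled by $m$; under $v_k$ these land in the $\Phi_{m+k}$-eigenspace. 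Tracking which eigenvalues cross zero between $\D$ and the conjugated operator gives precisely the two double sums: for $k>0$ the nonnegative spectral projection of $\D\otimes1$ on the source gains the eigenspaces $\Phi_n$ with $0\le n<k$ (these go from nonnegative to negative after conjugation — sign as written), and for $k<0$ it loses the eigenspaces $\Phi_n$ with $k\le n<0$. Each contributes its $\phi_\D$-trace, and $\phi_\D(v_kv_k^*(\Phi_n\otimes1)) = e^{-\beta n}(\Tr_\phi\otimes\Tr)(v_kv_k^*(\Phi_n\otimes1))$ by the very definition of $\phi_\D = \Tr_\phi(e^{-\beta\D/2}\cdot e^{-\beta\D/2})$ together with $v_kv_k^*(\Phi_n\otimes1)$ lying in the $\Phi_n$-eigenspace.

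The main obstacle I expect is the bookkeeping of signs and index ranges in the crossing count — matching "$k\le n<0$" versus "$0\le n<k$" and getting the overall $+/-$ right — since $v(\D\otimes1)v^*$ has the opposite sign convention from $vv^*(\D\otimes1)$ on the complement of the support, and one must be careful that spectral flow along the linear path $t\mapsto (1-t)vv^*\D + tv\D v^*$ counts eigenvalues of $v\D v^*$ minus those of $vv^*\D$ in the window $[0,\infty)$, restricted to where these are finite-rank. A secondary technical point is the semifiniteness: $P_1$ and $P_2$ individually have infinite $\phi_\D$-trace, so the computation must be organized as $\Ind_{\phi_\D\otimes\Tr}(P_1P_2)=(\phi_\D\otimes\Tr)(P_1-P_1\wedge P_2)-(\phi_\D\otimes\Tr)(P_2-P_1\wedge P_2)$ (or equivalently via $\Tr$ of the difference restricted to the ranges of the $v_kv_k^*$, which \emph{are} $\phi_\D$-finite by Lemma~\ref{lemma:not-too-big2} since $v_kv_k^*\in\F^\sim\otimes\B(\K)$); this is where Lemma~\ref{lemma:not-too-big2} does the real work in making every term finite.
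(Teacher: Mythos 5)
Your overall approach matches the paper's: reduce to a single homogeneous component $v_k$ by the mutual orthogonality of sources and ranges from Lemma~\ref{moduper}, use the shift relation $v_k(\Phi_n\otimes1)=(\Phi_{n+k}\otimes1)v_k$ to see exactly which spectral subspaces of $\D$ cross zero under conjugation, and feed in the trace identity $\phi_\D(\cdot\,\Phi_n)=e^{-\beta n}\Tr_\phi(\cdot\,\Phi_n)$. The signs and index ranges you describe are the right ones.

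There is, however, a genuine logical gap in the way you establish Breuer-Fredholmness. You assert that because $v(\D\otimes1)v^*-vv^*(\D\otimes1)=v[\D\otimes1,v^*]$ is bounded, ``by the definition of semifinite spectral flow recalled in Subsection~4.2, $P_1P_2$ is Breuer-Fredholm.'' That is not what the definition gives you: boundedness of the perturbation and Breuer-Fredholmness of $P_1P_2$ are \emph{separate} hypotheses in the definition, and the former does not imply the latter. What actually delivers Breuer-Fredholmness here is the explicit identity
$$v(P\otimes1)v^*=\sum_{n\ge k}vv^*(\Phi_n\otimes1),$$
which gives $P_1-P_2$ as a \emph{finite} sum of projections $vv^*(\Phi_n\otimes1)$, each of finite $\phi_\D\otimes\Tr$-trace. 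The compactness (indeed, trace-class nature) of $P_1-P_2$ is what makes $P_1P_2$ Breuer-Fredholm, and the same computation immediately produces the displayed formula. So the paper's derivation of $P_1-P_2$ is not just bookkeeping; it is the step that simultaneously proves Fredholmness and evaluates the flow, and your proof needs to present it before asserting Fredholmness, not after.

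Two smaller points. First, your parenthetical $\Ind_{\phi_\D\otimes\Tr}(P_1P_2)=(\phi_\D\otimes\Tr)(P_1-P_1\wedge P_2)-(\phi_\D\otimes\Tr)(P_2-P_1\wedge P_2)$ has the sign reversed: with the paper's convention $sf(\D_1,\D_2)=\Ind_\tau(P_1P_2)=\tau(P_2)-\tau(P_1)$ in the finite case, the kernel contribution is $\tau(P_2-P_1\wedge P_2)$ and the cokernel is $\tau(P_1-P_1\wedge P_2)$. Second, to invoke Lemma~\ref{lemma:not-too-big2} for finiteness you must first know $v_kv_k^*\in\F$, i.e.\ $\tau(v_kv_k^*)<\infty$, which is not automatic when $\phi$ is a weight. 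The paper instead observes that for $k\ne0$ one has $v_k\in A\otimes\B(\K)$ (not merely $A^\sim\otimes\B(\K)$), so $v_kv_k^*$ is a projection in $F\otimes\B(\K)$ and hence lies in the Pedersen ideal, giving finiteness of $\tau\otimes\Tr$ directly; that argument is what makes the finiteness claim watertight in the weight case.
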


\begin{proof}
By Lemma~\ref{moduper} the element $v$ is a finite sum of its
homogeneous components $v_k$ which are partial isometries with mutually
orthogonal sources and ranges. The operators $vv^*(\D\otimes1)$ and
$v(\D\otimes1)v^*$ commute with $v_kv_k^*$ and
$$
v_kv_k^*vv^*(\D\otimes1)=v_kv_k^*(\D\otimes1),\ \
v_kv_k^*v(\D\otimes1)v^*=v_k(\D\otimes1)v^*_k.
$$
This shows that without loss of generality we may assume that $v$ is
homogeneous, say $v=v_k$. Furthermore, for $k=0$ the operators
coincide, so we just have to consider the case $k\ne0$.

Let $P=\chi_{[0,+\infty)}(\D)=\sum_{n\ge0}\Phi_n$. Since $vv^*$ and
$v^*v$ commute with $\D$, we have
$$
P_1=1-vv^*+vv^*(P\otimes1)\ \ \hbox{and}\ \ P_2=1-vv^*+v(P\otimes1)v^*.
$$
But using homogeneity we can actually say much more and easily express
these projections in terms of $vv^*$ and $\Phi_n$. Namely, as
$v(\Phi_n\otimes1)=(\Phi_{n+k}\otimes1)v$, we have
\begin{equation}
v(P\otimes1)v^*=\sum_{n\ge k}vv^*(\Phi_n\otimes1).
\label{eq:I-can-count}
\end{equation}
With this information it is easy to show that $P_1P_2$ is
Breuer-Fredholm, since this is implied by $P_1-P_2$ being compact in
$\cM\otimes \B(\K)$. However from Equation \eqref{eq:I-can-count} we
have
$$ P_1-P_2=\sum_{n=0}^{k-1}vv^*(\Phi_n\otimes1),\ k>0,\qquad
P_1-P_2=-\sum_{n=k}^{-1}vv^*(\Phi_n\otimes1),\ k<0.$$
To finish the proof it therefore remains to show that for every $n$ the
projection $vv^*(\Phi_n\otimes1)$ has finite trace with respect to
$\phi_\D\otimes\Tr$. By the same argument as in the proof of
Lemma~\ref{lemma:not-too-big} we have
$$
(\phi_\D\otimes\Tr)(vv^*(\Phi_n\otimes1))=
e^{-\beta n}(\Tr_\phi\otimes\Tr)(vv^*(\Phi_n\otimes1))\le(\tau\otimes\Tr)(vv^*).
$$
Notice now that $v\in A\otimes \B(\K)$, since $v=v_k$ is homogeneous
with $k\ne0$. Hence the projection $vv^*\in F\otimes \B(\K)$ is in the
domain of the semifinite trace $\tau\otimes\Tr$ on $F\otimes \B(\K)$,
since the latter domain contains the Pedersen ideal and, in particular,
every projection.
\end{proof}

Observe that the above proof shows that if $v$ is a modular partial
isometry then $v-v_0\in\A\otimes\B(\K)$.
Notice also that if we have a continuous path of modular partial
isometries then the corresponding projections $P_1$ and $P_2$ also form
norm-continuous paths. It follows that the map
$$
v\mapsto
sf_{\phi_\D\otimes\Tr}(vv^*(\D\otimes1),v(\D\otimes1)v^*)
$$
defines a homomorphism $K_1(A,\s)\to\Rl$; this of course also follows
from the explicit expression for the spectral flow. We call this
homomorphism the {\bf modular index} and denote it by $\Ind_{\phi_\D}$.
The following theorem compares $\Ind_{\phi_\D}$ with the equivariant
spectral flow.

\begin{thm}\label{thm:ind-mod-uni}
The modular index map $\Ind_{\phi_\D}\colon K_1(A,\sigma)\to\Rl$ is the
composition of the maps
$$
K_1(A,\sigma)\xrightarrow{T} K^\T_0(M(F,A))
\xrightarrow{sf}\Rl[\chi,\chi^{-1}]\xrightarrow{\Ev(e^{-\beta})}\Rl,
$$
where $\Ev(e^{-\beta})$ is the evaluation at $\chi=e^{-\beta}$. If the
SSA is satisfied then, equivalently, $\Ind_{\phi_\D}$ is the
composition
$$
K_1(A,\sigma)\xrightarrow{[v]\mapsto\sum_k\ll v_k\gg} K^\T_0(M(F,A))
\xrightarrow{-\Ind_{\hat\D}}K_0^\T(F)
=K_0(F)[\chi,\chi^{-1}]\xrightarrow{\tau_*}
\Rl[\chi,\chi^{-1}]\xrightarrow{\Ev(e^{-\beta})}\Rl.
$$
\end{thm}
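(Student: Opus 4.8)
The plan is to reduce the statement to a single homogeneous partial isometry and then to compute the equivariant spectral flow of its mapping--cone class directly. All maps involved are homomorphisms $K_1(A,\s)\to\Rl$ (resp.\ into $\Rl[\chi,\chi^{-1}]$): $\Ind_{\phi_\D}$ by the remark after Lemma~\ref{lemma:msf}, $T$ by Proposition~\ref{modular-homo}, $sf$ by Definition~\ref{dfn:equiflow}, and $\tau_*$ and $\Ev(e^{-\beta})$ obviously. By the Corollary following Lemma~\ref{moduper}, $K_1(A,\s)$ is generated by classes of homogeneous partial isometries, and for such a $v$, of $\s$-degree $k$, we have $T([v])=\ll\!\!v\!\!\gg$ by the definition of $T$. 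Hence it suffices to establish $\Ind_{\phi_\D}([v])=\Ev(e^{-\beta})\bigl(sf(\ll\!\!v\!\!\gg)\bigr)$ for one homogeneous partial isometry $v\in A_k\otimes\B(\K)$, with $\K$ a finite-dimensional Hilbert space carrying the trivial $\T$-action. For $k=0$ this is immediate: $v$ commutes with $\D$, so $v(\D\otimes1)v^*=vv^*(\D\otimes1)$ and the left-hand side vanishes, while $\ll\!\!v\!\!\gg=0$.

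Suppose now $k\ne0$. Then $\ll\!\!v\!\!\gg=[w_v]$ with $w_v=\bma 0 & v\\ 0 & 0\ema$ a $\s\otimes\Ad U$-invariant partial isometry on $\HH\otimes\HH_U$, $\HH_U=\K\oplus\K[k]$; and $w_v^*w_v-w_vw_v^*=\bma -vv^* & 0\\ 0 & v^*v\ema$ is block-diagonal, with $-vv^*$ acting on $\HH\otimes\K$ and $v^*v$ on $\HH\otimes\K[k]$. Since $\K$ has weight $0$ and $\K[k]$ weight $k$, the projection $Q_n=\sum_j\Phi_{n-j}\otimes\Psi_j$ of Definition~\ref{dfn:equiflow} restricts to $\Phi_n\otimes1$ on $\HH\otimes\K$ and to $\Phi_{n-k}\otimes1$ on $\HH\otimes\K[k]$, while $P\otimes1=\sum_{m\ge0}\Phi_m\otimes1$ on both summands; thus after multiplying by $P\otimes1$ the first block keeps $\Phi_n\otimes1$ exactly when $n\ge0$ and the second keeps $\Phi_{n-k}\otimes1$ exactly when $n\ge k$. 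Tracing the two blocks separately, and using homogeneity $v(\Phi_{n-k}\otimes1)=(\Phi_n\otimes1)v$ together with the trace property of $\Tr_\phi$ (as in the proof of Lemma~\ref{lemma:not-too-big}) to rewrite $(\Tr_\phi\otimes\Tr)\bigl(v^*v(\Phi_{n-k}\otimes1)\bigr)=(\Tr_\phi\otimes\Tr)\bigl(vv^*(\Phi_n\otimes1)\bigr)$, one obtains
$$
sf_n(w_v)=\bigl(\mathbf{1}_{\{n\ge k\}}-\mathbf{1}_{\{n\ge 0\}}\bigr)\,(\Tr_\phi\otimes\Tr)\bigl(vv^*(\Phi_n\otimes1)\bigr),
$$
which equals $-(\Tr_\phi\otimes\Tr)(vv^*(\Phi_n\otimes1))$ on $0\le n<k$ when $k>0$, equals $+(\Tr_\phi\otimes\Tr)(vv^*(\Phi_n\otimes1))$ on $k\le n<0$ when $k<0$, and is $0$ otherwise. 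Forming $sf(\ll\!\!v\!\!\gg)=\sum_n sf_n(w_v)\chi^n$, evaluating at $\chi=e^{-\beta}$, and then summing over the homogeneous components of a general modular partial isometry recovers exactly the two nested sums of Lemma~\ref{lemma:msf}. This proves the first assertion, $\Ev(e^{-\beta})\circ sf\circ T=\Ind_{\phi_\D}$.

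For the second assertion assume the SSA. Each $w_{v_k}$ is itself $\s\otimes\iota$-homogeneous, so a fortiori its $\s\otimes\iota$-homogeneous components are partial isometries; Theorem~\ref{mainequivariantresult}, together with the definition of $\Tr_\phi$ (cf.\ the paragraph after Definition~\ref{dfn:equiflow}), then gives $sf(\ll\!\!v_k\!\!\gg)=\tau_*\bigl(-\Ind_{\hat\D}(\ll\!\!v_k\!\!\gg)\bigr)$ in $\Rl[\chi,\chi^{-1}]$. Summing over $k$, using $T([v])=\sum_k\ll\!\!v_k\!\!\gg$ and additivity of $\Ind_{\hat\D}$ and $\tau_*$, and applying $\Ev(e^{-\beta})$, we see that the longer composition through $-\Ind_{\hat\D}$, $\tau_*$ and $\Ev(e^{-\beta})$ coincides with $\Ev(e^{-\beta})\circ sf\circ T$, hence with $\Ind_{\phi_\D}$ by the first part.

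The one delicate point is the displayed formula for $sf_n(w_v)$: one must track carefully which of the spectral projections $\Phi_n,\Phi_{n-k}$ of $\D$ survive multiplication by $P\otimes1$ on each of the two summands of $\HH\otimes\HH_U$, the sign coming from $-w_vw_v^*$ versus $w_v^*w_v$, and the degree shift built into $\K[k]$, so that the open and closed ends of the summation ranges at $0$ and at $k$ (for both signs of $k$) line up exactly with the two nested sums of Lemma~\ref{lemma:msf}. Beyond the homogeneity of $v$ and the identity $\Tr_\phi(a^*a)=\Tr_\phi(aa^*)$, nothing further is needed.
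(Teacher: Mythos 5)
Your proof is correct and follows essentially the same route as the paper's: reduce to a single $\s$-homogeneous partial isometry $v=v_k$ via the generation result from Lemma~\ref{moduper}, compute $sf_n(w_v)$ from the $2\times2$ block structure of $w_v^*w_v-w_vw_v^*$ and of $Q_n$ on $\HH\otimes(\K\oplus\K[k])$, use the trace property $(\Tr_\phi\otimes\Tr)(v^*v(\Phi_{n-k}\otimes1))=(\Tr_\phi\otimes\Tr)(vv^*(\Phi_n\otimes1))$ to see the summand survives only on the finite window between $0$ and $k$, and match with Lemma~\ref{lemma:msf} at $\chi=e^{-\beta}$. Your indicator-function bookkeeping and your explicit treatment of the $k=0$ case and of the second (SSA) assertion are minor presentational variants of the paper's argument, which folds the latter into the remark following Definition~\ref{dfn:equiflow}.
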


\begin{proof}
This is a matter of bookkeeping. Let $v=v_k\in A\otimes \B(\K)$ be a
homogeneous partial isometry, $k\ne0$. Recall that $\ll\!\! v\!\!\gg$
is represented by $w_v=\bma 0&v\\0&0\ema\in \B(\K\oplus\K[k])$. To
compute $sf([w_v])$ first observe that the projection $Q_n$ onto the
$\chi^n$-homogeneous component of $\HH\otimes(\K\oplus\K[k])$ is $\bma
\Phi_n&0\\0&\Phi_{n-k}\ema$. Therefore
\begin{align*}
sf([w_v])&=\sum_n(\Tr_\phi\otimes\Tr)((w_v^*w_v-w_vw_v^*)Q_n(P\otimes1))\chi^n\\
&=\sum_n(\Tr_\phi\otimes\Tr)(v^*v(\Phi_{n-k}P\otimes1)-vv^*(\Phi_nP\otimes1))\chi^n.
\end{align*}
The projection $v^*v(\Phi_{n-k}\otimes1)=v^*(\Phi_n\otimes1)v$ is
equivalent to the projection $vv^*(\Phi_n\otimes1)$. It follows that
the $n$th summand in the above expression is nonzero only when $n-k$
and $n$ have different signs. More precisely, for $k<0$ we get
$$
\sum_{k\le n<0}(\Tr_\phi\otimes\Tr)(v^*v(\Phi_{n-k}\otimes1))\chi^n
=\sum_{k\le n<0}(\Tr_\phi\otimes\Tr)(vv^*(\Phi_n\otimes1))\chi^n,
$$
and for $k>0$ we get
$$
-\sum_{0\le n<k}(\Tr_\phi\otimes\Tr)(vv^*(\Phi_n\otimes1))\chi^n.
$$
For $\chi=e^{-\beta}$ these expressions coincide with those in
Lemma~\ref{lemma:msf}.
\end{proof}

\section{The analytic index from spectral flow}\label{subsec:index-pairing}

\subsection {A spectral flow formula}

Our method of computing numerical invariants from KMS states exploits
semifinite spectral flow and so we need to review the spectral flow
formula of \cite{CP2}. There are two versions of this formula in the
unbounded setting, one for $\theta$-summable spectral triples, and the
other for finitely summable triples. It is the latter that we will want
to use. First we quote  \cite[Corollary 8.11]{CP2}.

\begin{prop} Let $(\A,\HH,\D_0)$ be an odd unbounded 
$\theta$-summable semifinite spectral triple relative to $(\cM,\tau)$,
where $\tau$ is a faithful semifinite normal trace on $\cM$. For any
$\epsilon>0$ we define a one-form $\alpha^\epsilon$ on the affine space
$\cM_0=\D_0+\cM_{sa}$ by
$$\alpha^\epsilon(A)=\sqrt{\frac{\epsilon}{\pi}}\tau(Ae^{-\epsilon\D^2})$$
for $\D\in\cM_0$ and $A\in T_\D(\cM_0)=\cM_{sa}$ (here $T_\D(\cM_0)$ is
the tangent space to $\cM_0$ at $\D$). Then the integral of
$\alpha^\epsilon$ is independent of the piecewise $C^1$ path in $\cM_0$
and if $\{\D_t =\D_a+A_{t}\}_{t\in[a,b]}$ is any piecewise $C^1$ path
in $\cM_0$ joining $\D_a$ and $\D_b$ then
$$
sf(\D_a,\D_b)=\sqrt{\frac{\epsilon}{\pi}}
\int_a^b\tau(\D_t'e^{-\epsilon\D_t^2})dt
+\frac{1}{2}\eta_\epsilon(\D_b)
-\frac{1}{2}\eta_\epsilon(\D_a)
+\frac{1}{2}\tau\left([\ker(\D_b)]-[\ker(\D_a)]\right).$$
Here the truncated eta invariant is given for $\epsilon>0$ by
$$\eta_\epsilon(\D)=
\frac{1}{\sqrt{\pi}}\int_\epsilon^\infty\tau(\D e^{-t\D^2})t^{-1/2}dt.$$
\end{prop}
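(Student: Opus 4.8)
This is \cite[Corollary 8.11]{CP2}; the plan is to recover it in two independent steps: path-independence of $\int\alpha^\epsilon$, which is a closedness computation, and the identification of the full right-hand side with $sf(\D_a,\D_b)$, which I would do by showing that right-hand side is independent of $\epsilon$ and then letting $\epsilon\to\infty$. For the first step, $\cM_0=\D_0+\cM_{sa}$ is an affine, hence contractible, space, so it suffices to prove $\alpha^\epsilon$ closed; working with the constant tangent fields $V_1,V_2\in\cM_{sa}$ (whose bracket vanishes) this reduces to the symmetry $D_{V_1}\tau(V_2e^{-\epsilon\D^2})=D_{V_2}\tau(V_1e^{-\epsilon\D^2})$. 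I would compute $D_Ve^{-\epsilon\D^2}$ by Duhamel's formula
\[
D_Ve^{-\epsilon\D^2}=-\epsilon\int_0^1e^{-s\epsilon\D^2}(\D V+V\D)e^{-(1-s)\epsilon\D^2}\,ds,
\]
and then, using the trace property together with the fact that $\theta$-summability of $\D_0$ is inherited by every $\D_0+A$ with $A\in\cM_{sa}$ (so that $e^{-\delta\D^2}$, $\D e^{-\delta\D^2}$ and $\D^2e^{-\delta\D^2}$ all lie in $\LL^1(\cM,\tau)$), collapse both directional derivatives to the same bilinear form with kernel $(e^{-\epsilon\lambda^2}-e^{-\epsilon\mu^2})/(\mu-\lambda)$, which is symmetric in $\lambda$ and $\mu$. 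Thus $\alpha^\epsilon$ is exact, $\int\alpha^\epsilon$ depends only on the endpoints, and I may replace the given path by the straight line $\D_t=(1-t)\D_a+t\D_b$, whose derivative $A=\D_b-\D_a$ is constant.

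For the second step, write $\Phi_\epsilon(\D_a,\D_b)$ for the whole right-hand side. Differentiating in $\epsilon$ along the straight-line path I expect the Duhamel identity
\[
\frac{d}{d\epsilon}\bigl(\sqrt\epsilon\,\tau(Ae^{-\epsilon\D_t^2})\bigr)=\frac{1}{2\sqrt\epsilon}\,\frac{d}{dt}\,\tau(\D_te^{-\epsilon\D_t^2}),
\]
whose integral over $t\in[0,1]$ gives $\frac{d}{d\epsilon}\!\int\alpha^\epsilon=\frac{1}{2\sqrt{\pi\epsilon}}\bigl(\tau(\D_be^{-\epsilon\D_b^2})-\tau(\D_ae^{-\epsilon\D_a^2})\bigr)$; this is cancelled exactly by $\frac{d}{d\epsilon}\bigl(\tfrac12\eta_\epsilon(\D_b)-\tfrac12\eta_\epsilon(\D_a)\bigr)$, while the kernel term is $\epsilon$-independent, so $\Phi_\epsilon$ does not depend on $\epsilon$. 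It then remains to let $\epsilon\to\infty$: the eta terms tend to $0$ (dominated convergence, using $\theta$-summability of $\D$ restricted to $(\ker\D)^\perp$), while the heat integral $\sqrt{\epsilon/\pi}\int_a^b\tau(\D_t'e^{-\epsilon\D_t^2})\,dt$ equals $\tfrac12\tau\bigl(\operatorname{erf}(\sqrt\epsilon\,\D_b)-\operatorname{erf}(\sqrt\epsilon\,\D_a)\bigr)$ read off as a trace-class difference; since $\operatorname{erf}(\sqrt\epsilon\,\lambda)\to\operatorname{sgn}\lambda$ pointwise, tallying the crossings of $0$ this limit is $sf(\D_a,\D_b)-\tfrac12\tau([\ker\D_b]-[\ker\D_a])$, and combining with the kernel term gives $\Phi_\epsilon=sf(\D_a,\D_b)$.

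The hard part will be making the $\epsilon\to\infty$ limit of the heat integral rigorous in the semifinite setting, where there is no eigenvalue list, $\operatorname{erf}(\sqrt\epsilon\D_b)-\operatorname{erf}(\sqrt\epsilon\D_a)$ is only trace class on short subintervals, and the pointwise statement $\operatorname{erf}\to\operatorname{sgn}$ must become a uniform statement about spectral measures. I would deal with this exactly as in \cite{CP2,BCPRSW}: partition $[a,b]$ finely enough (using norm-continuity of $t\mapsto\D_t$ and $\theta$-summability) that on each subinterval the spectral projection of $\D_t$ onto a fixed neighbourhood of $0$ has uniformly finite $\tau$-trace, so that on that subspace the finite-dimensional tally applies while the complement contributes negligibly as $\epsilon\to\infty$; additivity of both $sf$ and $\Phi_\epsilon$ under concatenation, together with the homotopy/endpoint invariance already established, then reassembles the full formula. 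Every trace-class manipulation used above — differentiating under the trace in the Duhamel steps and finiteness of $\tau(\D_t^je^{-\epsilon\D_t^2})$ — rests on the single input that $\theta$-summability is stable under bounded self-adjoint perturbations.
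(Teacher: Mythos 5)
The paper does not prove this proposition: it is stated as a direct quotation of \cite[Corollary~8.11]{CP2}, and the only surrounding text introduces the notation and then moves on to the Laplace-transformed (finitely summable) version. So there is no in-paper proof for you to match; your sketch can only be compared with the original argument in \cite{CP2} and with the general lore in \cite{BCPRSW}.

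Taken on its own terms, your three-step strategy (exactness of $\alpha^\epsilon$ on the contractible affine space, $\epsilon$-independence of the full right-hand side via the Duhamel/eta cancellation, then the $\epsilon\to\infty$ limit) is the correct overall route and is in the spirit of \cite{CP2}. A few remarks. Your symmetric divided-difference kernel $(e^{-\epsilon\mu^2}-e^{-\epsilon\lambda^2})/(\lambda-\mu)$ is in fact right once you keep track of both factors: the $\int_0^1 e^{-s\epsilon\lambda^2}e^{-(1-s)\epsilon\mu^2}\,ds$ from Duhamel contributes $(e^{-\epsilon\lambda^2}-e^{-\epsilon\mu^2})/(-\epsilon(\lambda^2-\mu^2))$ and the anticommutator $\D V+V\D$ contributes $(\lambda+\mu)$, whose product is your kernel; it would help to say this explicitly rather than leave the reader to reconcile the apparent mismatch of powers. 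Your Duhamel identity for $\partial_\epsilon(\sqrt\epsilon\,\tau(Ae^{-\epsilon\D_t^2}))$ checks out (using cyclicity to turn $\tau(\D_t D_A e^{-\epsilon\D_t^2})$ into $-2\epsilon\,\tau(A\D_t^2e^{-\epsilon\D_t^2})$), so the $\epsilon$-independence step is fine. The genuinely delicate step, as you say, is passing from the statement ``$\operatorname{erf}(\sqrt\epsilon\,\lambda)\to\operatorname{sgn}\lambda$ pointwise'' to a trace-class limit: $\operatorname{erf}(\sqrt\epsilon\,\D_b)-\operatorname{erf}(\sqrt\epsilon\,\D_a)$ is generally not trace class, and even along a fine partition one must control the part of the path whose spectrum sits near $0$, which is exactly where $\operatorname{erf}(\sqrt\epsilon\,\cdot)$ converges slowly. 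Your appeal to the partitioning and additivity arguments of \cite{CP2,BCPRSW} is the right move, but that is where most of the actual work in the cited proof lives, and your sketch does not reduce it — it defers to it. That is acceptable for a sketch, but you should be explicit that the identification with $sf$ in step 3 is the substantive content of \cite[Corollary~8.11]{CP2} and is not being reproved here.
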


We want to employ this formula in a finitely summable setting, so we
need to Laplace transform the various terms appearing in the formula.
In fact we were able in \cite{CRT} to translate the formula in
\cite{CP2} for the spectral flow into a residue type formula. The
importance of such a formula lies in the drastic simplification of
computations, since we may throw away terms that are holomorphic in a
neighbourhood of the point where we take a residue.

We introduce the notation
$$ C_r:=\frac{\sqrt{\pi}\Gamma(r-1/2)}{\Gamma(r)}
=\int_{-\infty}^\infty(1+x^2)^{-r}dx.$$
Observe that $C_r$ is analytic for $\Re(r)>1/2$ and has an analytic
continuation to a neighbourhood of $1/2$ where it has a simple pole (cf
\cite{CPRS2}) with residue equal to $1$.

\begin{lemma}  Let $\D$ be a
  self-adjoint operator on the Hilbert space $\HH$,  affiliated
  to the semifinite von Neumann algebra $\cM$. Suppose that for a
  fixed faithful, normal, semifinite trace $\tau$ on $\cM$ we have
$$(1+\D^2)^{-1/2}\in\LL^{(p,\infty)}(\cM,\tau),\quad p\geq 1.$$
Then the Laplace transform of $\eta_\epsilon(\D)$, the eta invariant of
$\D$, is given by $\frac{1}{C_r}\eta_\D(r)$ where
$$\eta_\D(r)=\int_1^\infty
\tau(\D(1+s\D^2)^{-r})s^{-1/2}ds,\quad \Re(r)>1/2+p/2.$$
\end{lemma}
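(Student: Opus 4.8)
The plan is to compute the (ordinary) Laplace transform of the function $\epsilon\mapsto\eta_\epsilon(\D)$ directly from its definition and identify the result with $C_r^{-1}\eta_\D(r)$. Recall
$$
\eta_\epsilon(\D)=\frac{1}{\sqrt\pi}\int_\epsilon^\infty\tau(\D e^{-t\D^2})\,t^{-1/2}\,dt,
$$
so formally
$$
\int_0^\infty e^{-\lambda\epsilon}\eta_\epsilon(\D)\,d\epsilon
=\frac{1}{\sqrt\pi}\int_0^\infty e^{-\lambda\epsilon}\Big(\int_\epsilon^\infty\tau(\D e^{-t\D^2})t^{-1/2}\,dt\Big)d\epsilon.
$$
The first step is to justify interchanging the two integrals (and the trace). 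Here the summability hypothesis $(1+\D^2)^{-1/2}\in\LL^{(p,\infty)}(\cM,\tau)$ enters: for $t$ bounded away from $0$ the operator $\D e^{-t\D^2}$ is trace-class with $\tau(|\D|e^{-t\D^2})$ decaying rapidly, and near $t=0$ one estimates $\tau(|\D|e^{-t\D^2})\le C t^{-(p+1)/2}$ using the $\LL^{(p,\infty)}$ bound together with $|x|e^{-tx^2}\le C t^{-(p+1)/2}(1+x^2)^{-p/2}$ on $\Rl$; this gives absolute convergence of the iterated integral once $\Re(r)>1/2+p/2$, and Fubini–Tonelli applies. After swapping, the $\epsilon$-integral is elementary: $\int_0^t e^{-\lambda\epsilon}\,d\epsilon=\lambda^{-1}(1-e^{-\lambda t})$, which does not immediately look like the claimed answer, so the cleaner route is to substitute and rescale.

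Concretely, after exchanging orders the inner integral over $\epsilon\in[0,t]$ produces $\lambda^{-1}(1-e^{-\lambda t})$, and one is left with
$$
\frac{1}{\sqrt\pi\,\lambda}\int_0^\infty\tau(\D e^{-t\D^2})(1-e^{-\lambda t})t^{-1/2}\,dt.
$$
I would then recognize this as a Laplace-type integral and match it against the stated $\eta_\D(r)$ by making the change of variables that turns $(1+s\D^2)^{-r}$ into a Gamma-integral: using
$$
(1+s\D^2)^{-r}=\frac{1}{\Gamma(r)}\int_0^\infty u^{r-1}e^{-u}e^{-us\D^2}\,du,
$$
so that
$$
\eta_\D(r)=\int_1^\infty\tau(\D(1+s\D^2)^{-r})s^{-1/2}\,ds
=\frac{1}{\Gamma(r)}\int_1^\infty\!\!\int_0^\infty u^{r-1}e^{-u}\tau(\D e^{-us\D^2})\,du\,s^{-1/2}\,ds.
$$
Substituting $t=us$ in the $s$-integral and then carrying out the resulting integrals over $u$ (which yields a $\Gamma$-factor) and comparing with the display above, the constant that appears is exactly $C_r=\sqrt\pi\,\Gamma(r-1/2)/\Gamma(r)$, giving the asserted identity $\mathcal L[\eta_\bullet(\D)](\lambda)=C_r^{-1}\eta_\D(r)$ with $r$ and $\lambda$ related in the standard way (here $r$ plays the role of the Laplace variable after the $(1+s\D^2)^{-r}$ reparametrization, exactly as in \cite{CRT}).

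The main obstacle is purely analytic: making the trace–integral interchanges rigorous in the semifinite setting and pinning down the precise half-plane $\Re(r)>1/2+p/2$ of absolute convergence. The estimate $\tau(|\D|(1+\D^2)^{-s})<\infty$ for $\Re(s)>p/2$ coming from $(1+\D^2)^{-1/2}\in\LL^{(p,\infty)}$ (via the standard fact that $\LL^{(p,\infty)}\subset\LL^{q}$ for $q>p$, applied on the von Neumann algebra $\cM$) is what drives everything, and one must be slightly careful that $\D$ is only affiliated to $\cM$, not bounded — but since all the functions $t\mapsto\D e^{-t\D^2}$ and $s\mapsto\D(1+s\D^2)^{-r}$ are bounded Borel functions of $\D$ and hence genuine elements of $\cM$, the functional calculus and normality of $\tau$ handle this without difficulty. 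The algebraic manipulation of Gamma-integrals afterward is routine and essentially identical to the computation already performed in \cite{CRT}.
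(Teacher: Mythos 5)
There is a genuine gap: you have interpreted ``Laplace transform'' too literally, and your first calculation is a dead end. In the paper (following \cite{CP2,CRT}), the ``Laplace transform'' of $\eta_\epsilon(\D)$ is not $\int_0^\infty e^{-\lambda\epsilon}\eta_\epsilon(\D)\,d\epsilon$. It is the $\Gamma$-weighted Mellin transform
$$
r\longmapsto \frac{1}{\Gamma(r-1/2)}\int_0^\infty \epsilon^{r-3/2}e^{-\epsilon}\,\eta_\epsilon(\D)\,d\epsilon,
$$
obtained by inserting the identity $1=\frac{1}{\Gamma(r-1/2)}\int_0^\infty\epsilon^{r-3/2}e^{-\epsilon}d\epsilon$; this is the device that converts heat-kernel ($\theta$-summable) expressions into resolvent/zeta-type ($(1+\D^2)^{-r}$) expressions, and the same transform is applied term by term to the whole spectral flow formula. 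Your computation of the classical Laplace transform, which lands on
$$
\frac{1}{\sqrt\pi\,\lambda}\int_0^\infty\tau(\D e^{-t\D^2})(1-e^{-\lambda t})\,t^{-1/2}\,dt,
$$
produces resolvent factors $\D/\sqrt{\D^2+\lambda}$ (evaluate the $t$-integral spectrally), not the zeta factors $\D(1+s\D^2)^{-r}$ appearing in $\eta_\D(r)$; there is no change of variables $\lambda\leftrightarrow r$ that makes the two agree, so the closing step you propose (``$r$ plays the role of the Laplace variable after the $(1+s\D^2)^{-r}$ reparametrization'') does not hold.

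Interestingly, your \emph{second} calculation --- expanding $(1+s\D^2)^{-r}$ via $(1+s\D^2)^{-r}=\frac{1}{\Gamma(r)}\int_0^\infty u^{r-1}e^{-u}e^{-us\D^2}\,du$ and substituting $t=us$ --- is essentially the paper's argument read backwards. If you carry it through you find $\eta_\D(r)=\frac{\sqrt\pi}{\Gamma(r)}\int_0^\infty u^{r-3/2}e^{-u}\eta_u(\D)\,du$, i.e. exactly the $\Gamma$-weighted Mellin transform displayed above (with the constant $C_r=\sqrt\pi\,\Gamma(r-1/2)/\Gamma(r)$ falling out), not the classical Laplace transform you computed first. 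The paper instead works forwards: it rescales $t=\epsilon s$ to write $\eta_\epsilon(\D)=\frac{\sqrt\epsilon}{\sqrt\pi}\int_1^\infty\tau(\D e^{-\epsilon s\D^2})s^{-1/2}ds$, multiplies by $\epsilon^{r-3/2}e^{-\epsilon}/\Gamma(r-1/2)$, integrates in $\epsilon$, swaps integrals, and recognizes $\int_0^\infty\epsilon^{r-1}e^{-\epsilon(1+s\D^2)}d\epsilon=\Gamma(r)(1+s\D^2)^{-r}$. Your analytic justifications (the $\LL^{(p,\infty)}$ estimates, the half-plane $\Re(r)>1/2+p/2$, and the Fubini/normality arguments) are fine and apply equally to the correct transform; the fix is simply to replace the kernel $e^{-\lambda\epsilon}$ by $\epsilon^{r-3/2}e^{-\epsilon}/\Gamma(r-1/2)$ at the start.
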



\begin{proof} We need to Laplace transform the `$\theta$ summable
formula' for the truncated $\eta$ invariant:
$$\eta_\epsilon(\D)=\frac{1}{\sqrt{\pi}}
\int_\epsilon^\infty\tau(\D e^{-t\D^2})t^{-1/2}dt.$$
This integral converges for all $\epsilon>0$. First we rewrite the
formula as
$$\eta_\epsilon(\D)=\frac{\sqrt{\epsilon}}{\sqrt{\pi}}
\int_1^\infty\tau(\D e^{-\epsilon s\D^2})s^{-1/2}ds.$$
Using
$$1=\frac{1}{\Gamma(r-1/2)}
\int_0^\infty\epsilon^{r-3/2}e^{-\epsilon}d\epsilon$$
for $\Re(r)>p/2+1/2$, the Laplace transform of $\eta_\epsilon(\D)$ is
\begin{align}
\frac{1}{C_r}\eta_\D(r)&=\frac{1}{\sqrt{\pi}\Gamma(r-1/2)}
\int_0^\infty\epsilon^{r-1}e^{-\epsilon}\int_1^\infty
\tau(\D e^{-\epsilon s\D^2})s^{-1/2}dsd\epsilon\nno
&=\frac{1}{\sqrt{\pi}\Gamma(r-1/2)}\int_1^\infty s^{-1/2}
\tau(\D\int_0^\infty\epsilon^{r-1}e^{-\epsilon(1+s\D^2)}d\epsilon)ds\nno
&=\frac{\Gamma(r)}{\sqrt{\pi}\Gamma(r-1/2)}
\int_1^\infty s^{-1/2}\tau(\D(1+s\D^2)^{-r})ds.\end{align}
\end{proof}

For our final formula we restrict to $p=1$, which is the case of
interest in this paper.

\begin{prop}\label{residuespecflow}  Let $\D_a$ be a
self-adjoint densely defined unbounded operator on the Hilbert
space~$\HH$, affiliated to the semifinite von Neumann algebra $\cM$.
Suppose that for a  fixed faithful, normal, semifinite trace $\tau$ on
$\cM$ we have $(1+\D_a^2)^{-1/2}\in\LL^{(1,\infty)}(\cM,\tau).$ Let
$\D_b$ differ from $\D_a$ by a bounded self adjoint operator in  $\cM$.
Then for any piecewise $C^1$ path $\{\D_t=\D_a+A_{t}\},$ $t\in [a,b]$
in $\cM_0=\D_a+\cM_{sa}$ joining $\D_a$ and $\D_b$, the spectral flow $sf_\tau(\D_a,\D_b)$ is given by the
formula
\begin{align}
 \Res_{r=1/2}C_rsf_\tau(\D_a,\D_b)
&=\Res_{r=1/2}\left(\int_a^b\tau(\dot\D_t(1+\D_t^2)^{-r})dt+
\frac{1}{2}\left(\eta_{\D_b}(r)-\eta_{\D_a}(r)\right)\right)\nno
&+\frac{1}{2}\left(\tau(P_{\ker\D_b})-\tau(P_{\ker\D_a})\right),
\label{eq:resformula}
\end{align}
where $\eta_\D(r):= \int_1^\infty \tau(\D(1+s\D^2)^{-r})s^{-1/2}ds,\ \
\ \Re(r)>1.$ The meaning of \eqref{eq:resformula} is that
the function of~$r$ on the right hand side 
has a meromorphic continuation to a neighbourhood of $r=1/2$ with a
simple pole at $r=1/2$ where we take the residue.
\end{prop}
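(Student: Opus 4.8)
The plan is to derive \eqref{eq:resformula} by Laplace transforming, in the parameter $\epsilon$, the $\theta$-summable spectral flow formula quoted above. First I would observe that $(1+\D_a^2)^{-1/2}\in\LL^{(1,\infty)}(\cM,\tau)$ forces $\D_a$, and hence also $\D_b$ and every $\D_t=\D_a+A_t$ along the path (these differ from $\D_a$ by bounded operators of $\cM$), to be $\theta$-summable relative to $(\cM,\tau)$: indeed $(1+\D^2)^{-s}\in\LL^1(\cM,\tau)$ for $s>1$, and $e^{-t\D^2}=\big(e^{-t\D^2}(1+\D^2)^{s}\big)(1+\D^2)^{-s}$ with bounded first factor, so $\tau(e^{-t\D^2})<\infty$. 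Thus the quoted formula applies to the given path, yielding for every $\epsilon>0$
\begin{equation*}
sf_\tau(\D_a,\D_b)=\sqrt{\tfrac{\epsilon}{\pi}}\int_a^b\tau(\dot\D_t e^{-\epsilon\D_t^2})\,dt+\tfrac12\eta_\epsilon(\D_b)-\tfrac12\eta_\epsilon(\D_a)+\tfrac12\tau\big(P_{\ker\D_b}-P_{\ker\D_a}\big).
\end{equation*}

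Next I would multiply this identity by $\Gamma(r-\tfrac12)^{-1}\epsilon^{r-3/2}e^{-\epsilon}$ and integrate over $\epsilon\in(0,\infty)$. Since $\Gamma(r-\tfrac12)^{-1}\int_0^\infty\epsilon^{r-3/2}e^{-\epsilon}\,d\epsilon=1$ for $\Re(r)>\tfrac12$, both the left-hand side and the $\epsilon$-independent kernel term reproduce themselves, while the preceding Lemma identifies the $\eta_\epsilon$ terms with $\tfrac1{C_r}\eta_{\D_b}(r)$ and $\tfrac1{C_r}\eta_{\D_a}(r)$. For the path term I would move $e^{-\epsilon}$ inside the trace, use the Mellin identity $(1+\D_t^2)^{-r}=\Gamma(r)^{-1}\int_0^\infty u^{r-1}e^{-u(1+\D_t^2)}\,du$, and interchange the $\epsilon$- and $t$-integrations with the trace, obtaining $\tfrac1{C_r}\int_a^b\tau(\dot\D_t(1+\D_t^2)^{-r})\,dt$. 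Assembling the pieces gives, for $\Re(r)>1$,
\begin{equation*}
C_r\Big(sf_\tau(\D_a,\D_b)-\tfrac12\tau\big(P_{\ker\D_b}-P_{\ker\D_a}\big)\Big)=\int_a^b\tau(\dot\D_t(1+\D_t^2)^{-r})\,dt+\tfrac12\big(\eta_{\D_b}(r)-\eta_{\D_a}(r)\big).
\end{equation*}

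To conclude I would take $\Res_{r=1/2}$ of both sides. As $C_r=\sqrt\pi\,\Gamma(r-\tfrac12)/\Gamma(r)$ is meromorphic on $\C$, holomorphic near $r=\tfrac12$ apart from a simple pole there with $\Res_{r=1/2}C_r=1$ (because $\Gamma$ has residue $1$ at $0$ and $\Gamma(\tfrac12)=\sqrt\pi$), the left-hand side is meromorphic near $r=\tfrac12$ with at worst a simple pole; hence so is the right-hand side, which a priori is only holomorphic for $\Re(r)>1$. Taking residues and rearranging then gives precisely \eqref{eq:resformula}, with the extra term $\tfrac12\tau(P_{\ker\D_b}-P_{\ker\D_a})$ appearing exactly because $\Res_{r=1/2}C_r=1$.

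The step I expect to carry the real weight is the Fubini interchange in the middle paragraph: justifying that the $\epsilon$-integral, the path integral in $t$, and the trace may be exchanged, and that everything converges absolutely for $\Re(r)>1$. This uses the heat-kernel decay $\tau(e^{-\epsilon\D_t^2})=O(\epsilon^{-1/2})$ (up to a logarithm) as $\epsilon\to0^+$ that accompanies membership in $\LL^{(1,\infty)}$, together with the fact that, the path being piecewise $C^1$ in $\cM_0=\D_a+\cM_{sa}$, the family $\{\dot\D_t\}$ is uniformly bounded in $\cM_{sa}$ and the $\D_t$ inherit the summability of $\D_a$ with uniformly controlled constants; convergence of the $\eta_{\D}(r)$-integrals for $\Re(r)>1$ is exactly what the preceding Lemma supplies. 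Everything else is formal manipulation of the two quoted results.
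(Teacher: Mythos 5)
Your proposal is correct and follows the same approach as the paper: both Laplace transform the $\theta$-summable spectral flow formula of \cite{CP2} in the parameter $\epsilon$, invoke the preceding lemma for the transform of the truncated eta invariants, and deduce the meromorphic continuation of the right-hand side from the resulting identity for $\Re(r)$ large. The only difference is cosmetic: the paper cites \cite{CP2} for the Laplace transform of the path-integral term, whereas you carry it out directly via the Mellin identity, and you include a preliminary check of $\theta$-summability along the path which the paper leaves implicit.
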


\begin{proof} We apply the Laplace transform to the general spectral
flow formula. The computation of the Laplace transform of the eta
invariants is above, and the Laplace transform of the other integral is
in \cite{CP2}. The existence of the residue follows from the equality,
for $\Re(r)$ large,
$$C_rsf_\tau(\D_a,\D_b)=\int_a^b\tau(\dot\D_t(1+\D_t^2)^{-r})dt+
\frac{1}{2}\left(\eta_{\D_b}(r)-\eta_{\D_a}(r)\right)+
C_r\frac{1}{2}\left(\tau(P_{\ker\D_b})-\psi(P_{\ker\D_a})\right)$$
which shows that the sum of the integral and the eta terms has a
meromorphic continuation as claimed.
\end{proof}

This is the formula for spectral flow we will employ in the sequel.

\subsection{An index formula for modular partial isometries}
Having obtained a well-defined analytic index pairing, we now emulate
the methods of \cite{CPRS2} to obtain a `local index formula' to
compute this analytic pairing.
Let $v\in A^\sim$ be a modular partial isometry. Recall that (as we
observed after Lemma~\ref{lemma:msf}) we automatically have $v_k\in\A$ for
$k\ne0$. Furthermore, the same Lemma shows that $v_0$ does not
contribute to the spectral flow. In other words, we have the following.

\begin{lemma}\label{lm:local-unit}  Given the
modular spectral triple for $(A, \sigma,\phi)$, let $v\in A^\sim$ be a
modular partial isometry so that $p=vv^*-v_0v_0^*\in\F$, where $v_0\in
A_0$ is the $\s$-invariant part of $v$. Then $p$ commutes with $\D$ and
$v\D v^*$, and
$$sf_{\phi_\D}(vv^*\D,v\D v^*)=sf_{\phi_{\D,p}}(p\D,pv\D v^*),$$
where $\phi_{\D,p}=\phi_\D|_{p\cM p}$ is the trace on $p\cM p$.
\end{lemma}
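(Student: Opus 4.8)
The plan is to reduce everything to the homogeneous decomposition $v=\sum_k v_k$ furnished by Lemma~\ref{moduper}: the $v_k\in A_k$ are partial isometries, all but finitely many zero, with mutually orthogonal source projections $v_k^*v_k$ and mutually orthogonal range projections $v_kv_k^*$. First I would observe that $v_kv_k^*\in A_0=F$ for every $k$, so that $vv^*=\sum_k v_kv_k^*$, $v_0v_0^*$ and $p=\sum_{k\ne0}v_kv_k^*$ are projections lying in $F^\sim$. Since every element of $F$ is fixed by $\s_t=\Ad e^{it\D}$, it commutes with $\D$ and with all the spectral projections $\Phi_n$; in particular $p$, $vv^*$ and $v_0v_0^*$ commute with $\D$, which is the first assertion.

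Next I would compute $v\D v^*$. Since the source projections $v_j^*v_j$ are mutually orthogonal and each commutes with $\D$, the cross terms vanish, $v_j\D v_l^*=v_j(v_j^*v_j)\D v_l^*=v_j\D(v_j^*v_j)v_l^*=\delta_{jl}\,v_j\D v_j^*$, so $v\D v^*=\sum_k v_k\D v_k^*$. Using homogeneity in the form $v_k\Phi_n=\Phi_{n+k}v_k$ (as in the proof of Lemma~\ref{lemma:msf}), together with $\D=\sum_n n\Phi_n$ and the fact that $v_kv_k^*\in F$ commutes with the $\Phi_m$, a short reindexing gives $v_k\D v_k^*=v_kv_k^*(\D-k)$, and hence
$$
v\D v^*=vv^*\D-D_0,\qquad D_0:=\sum_{k\ne0}k\,v_kv_k^*\in F\subset\cM,
$$
with $D_0=D_0^*$ bounded. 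In particular $v\D v^*$ is affiliated to $\cM$ and differs from $vv^*\D=\D vv^*$ by a bounded operator in $\cM$, so $sf_{\phi_\D}(vv^*\D,v\D v^*)$ is well defined by Lemma~\ref{lemma:msf}. Orthogonality of the range projections gives $pD_0=D_0=D_0p$ and $v_0v_0^*D_0=0$; together with the previous paragraph this shows that $p$ and $v_0v_0^*$ commute with $v\D v^*$ as well, finishing the commutation claims.

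For the spectral flow identity I would invoke additivity of semifinite spectral flow over an orthogonal family of reducing subspaces. Decompose $1=(1-vv^*)+v_0v_0^*+p$; each of these three projections commutes with $vv^*\D$, with $v\D v^*$, and hence with the linear path $\D_t=(1-t)vv^*\D+t\,v\D v^*$. On $(1-vv^*)\HH$ both endpoints vanish, while on $v_0v_0^*\HH$ both equal $v_0v_0^*\D$ because $v_0v_0^*D_0=0$; so $vv^*\D$ and $v\D v^*$ restrict to the same operator on $(1-p)\HH$, whereas on $p\HH$ they restrict to $p\D$ and to $pv\D v^*$ respectively. Writing $P_i=\chi_{[0,\infty)}$ of the two endpoints, one has $P_i=(1-p)P_i(1-p)\oplus pP_ip$, so from $sf=\Ind_{\phi_\D}(P_1P_2)$ (\cite{BCPRSW}), additivity of the Breuer--Fredholm index over this direct sum, and the vanishing of the $(1-p)$-corner index (there $P_1=P_2$), we obtain $sf_{\phi_\D}(vv^*\D,v\D v^*)=sf_{\phi_{\D,p}}(p\D,pv\D v^*)$, with $\phi_{\D,p}=\phi_\D|_{p\cM p}$ a faithful normal semifinite trace on $p\cM p$.

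The only genuinely delicate point is this last splitting: one must check that $\Ind_{\phi_\D}(P_1P_2)$ is computed by the $p$-corner alone and may be evaluated intrinsically in $(p\cM p,\phi_{\D,p})$. This is immediate, however, since the kernel and cokernel of the $p$-corner of $P_1P_2$ coincide with those of $P_1P_2$ itself and are contained in $p\HH$, where $\phi_\D$ and $\phi_{\D,p}$ agree; all the rest is the elementary bookkeeping sketched above.
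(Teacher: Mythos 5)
Your argument is correct and is essentially the paper's (unwritten) proof made explicit: the paper treats the lemma as an immediate consequence of the computations in Lemma~\ref{lemma:msf} (homogeneous decomposition via Lemma~\ref{moduper}, the identity $v_k\D v_k^*=v_kv_k^*(\D-k)$, vanishing of the $k=0$ contribution), and you have filled in the missing step — additivity of the Breuer--Fredholm index over the reducing decomposition $1=(1-vv^*)\oplus v_0v_0^*\oplus p$, together with the observation that $\phi_\D$ and $\phi_{\D,p}$ agree on the $p$-corner where the index lives.
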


In our previous examples of the Cuntz algebra and $SU_q(2)$ we showed
that the operator $(1+\D^2)^{-1/2}$ lies in
$\LL^{(1,\infty)}(\cM,\phi_\D).$ However in general, by
Lemma~\ref{lemma:not-too-big2}, we only have
$f(1+\D^2)^{-1/2}\in\LL^{(1,\infty)}(\cM,\phi_\D)$ for $f\in\F=
F\cap\mbox{dom}(\phi)$. Thanks to the Lemma above this is sufficient
for our purposes, since  $p(1+\D^2)^{-1/2}$ is in
$\LL^{(1,\infty)}(p\cM p,\phi_{\D,p})$, and we are justified in using
our spectral flow formula.

We apply Proposition~\ref{residuespecflow} to the path
$\D_t=p\D+tpv[\D,v^*]=p\D+tv[\D,v^*]$ of operators affiliated with
$p\cM p$.

\begin{lemma}\label{lem:no-ker}
We have $\phi_{\D,p}(P_{\ker \D_0})-\phi_{\D,p}(P_{\ker\D_1})=0$.
\end{lemma}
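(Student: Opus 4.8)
The plan is to identify the kernels of $\D_0=p\D$ and $\D_1=pv\D v^*$ explicitly and show they carry the same $\phi_{\D,p}$-trace. Since $p=vv^*-v_0v_0^*$ and $\D$ is the diagonal operator $\sum_k k\Phi_k$, the kernel of $\D$ in $\HH$ is exactly $\Phi_0\HH$, so $\ker(p\D)=p(\Phi_0\otimes\text{stuff})\HH\oplus(1-p)\HH$ after restricting to $p\cM p$; more precisely, working inside the reduced algebra $p\cM p$ (where the unit is $p$), we have $P_{\ker\D_0}=p\,\chi_{\{0\}}(\D)\,p=p(\Phi_0)p$. For $\D_1=pv\D v^*$, conjugating by the partial isometry $v$ (which intertwines $vv^*$ and $v^*v$) gives $\ker(pv\D v^*)=v\,\chi_{\{0\}}(\D)\,v^*$ compressed appropriately; since $v$ is modular its homogeneous components $v_k$ are partial isometries with orthogonal sources and ranges, and one checks that $v\Phi_0 v^*=\sum_k v_k\Phi_{-k}v_k^*$, so $P_{\ker\D_1}=p\big(\sum_k v_k\Phi_{-k}v_k^*\big)p$.

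First I would reduce to the homogeneous case exactly as in the proof of Lemma~\ref{lemma:msf}: since $v=\sum_k v_k$ with the $v_k$ having mutually orthogonal sources and ranges, both $\D_0$ and $\D_1$ decompose as direct sums over the supports $v_kv_k^*$, and $v_0$ drops out by the definition of $p$. So it suffices to treat $v=v_k$ with $k\ne0$, acting as a partial isometry with $vv^*=p$. Then $P_{\ker\D_0}=p\Phi_0 p$ restricted to the range of $p$, while $P_{\ker\D_1}=v\Phi_0v^*$. The map $x\mapsto vxv^*$ is a $*$-isomorphism of $v^*v\,\cM\,v^*v$ onto $vv^*\,\cM\,vv^* = p\cM p$, and under it $v^*\Phi_0 v\mapsto v\Phi_0 v^* = P_{\ker\D_1}$. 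But $v^*\Phi_0 v = v^*v\,\Phi_{-k}$ (using homogeneity: $v(\Phi_n\otimes1)=(\Phi_{n+k}\otimes1)v$), so $P_{\ker\D_1}=v v^*\Phi_0 = p\Phi_0$... wait — more carefully, $v\Phi_0 v^* = v\Phi_0 v^*$ and $v^*\Phi_0 = \Phi_{-k}v^*$ gives $v\Phi_0 v^* = \Phi_k v v^* = \Phi_k p$, i.e. $P_{\ker\D_1}=p\Phi_k$ while $P_{\ker\D_0}=p\Phi_0$.

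It then remains to compute the two traces. We have $\phi_{\D,p}(p\Phi_0)=\phi_\D(p\Phi_0)=\Tr_\phi(e^{-\beta\D/2}p\Phi_0 e^{-\beta\D/2})=\Tr_\phi(p\Phi_0)=\tau(p)$ by \eqref{tr}, while $\phi_{\D,p}(p\Phi_k)=e^{-\beta k}\Tr_\phi(p\Phi_k)$. Now $p=vv^*$ with $v=v_k\in A_k$, so $p\Phi_k=\Theta_{v,v}$ (as in the proof of Lemma~\ref{lemma:not-too-big}), hence $\Tr_\phi(p\Phi_k)=\phi(v^*v)=e^{k\beta}\phi(vv^*)=e^{k\beta}\tau(p)$, giving $\phi_{\D,p}(p\Phi_k)=e^{-\beta k}e^{k\beta}\tau(p)=\tau(p)$. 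Thus both kernel traces equal $\tau(p)$ and their difference vanishes; summing over the homogeneous pieces $v_k$ preserves this. The main obstacle — and it is really only a bookkeeping point rather than a genuine difficulty — is being careful that the kernel projections are computed \emph{inside the reduced algebra $p\cM p$} with unit $p$, rather than inside $\cM$, so that the $(1-p)$-part of the Hilbert space does not spuriously contribute; the modularity of $v$ (Lemma~\ref{moduper}) is exactly what makes the homogeneous reduction legitimate and the conjugation $x\mapsto vxv^*$ well-behaved on the relevant corners.
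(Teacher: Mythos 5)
Your proof is correct and uses essentially the same ingredients as the paper's (the identification $P_{\ker\D_0}=vv^*\Phi_0$, $P_{\ker\D_1}=v\Phi_0 v^*$, the identity $\phi_\D(\cdot\,\Phi_k)=e^{-\beta k}\Tr_\phi(\cdot\,\Phi_k)$, and the KMS condition for $\phi$). The only structural difference is that you first reduce to a single homogeneous $v=v_k$ and then compute each kernel trace separately to get $\tau(p)$ twice, whereas the paper keeps $v$ general (with $v_0=0$) and disposes of the difference in one line via the modular twist $\phi_\D(v\Phi_0 v^*)=\phi_\D(\s_{-i\beta}(v^*)v\Phi_0)$ followed by $\phi_\D(f\Phi_0)=\phi(f)$; this is the same KMS fact you invoke in the form $\phi(v^*v)=e^{k\beta}\phi(vv^*)$, just applied without the homogeneous decomposition.
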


\begin{proof}
Since we cut down by the projection $p$, we may assume that $v_0=0$ and
so $v\in\A$ and $vv^*=p$. Then $P_{\ker\D_0}=vv^*\Phi_0$ and
$P_{\ker\D_1}=v\Phi_0v^*$. As $\phi_\D(f\Phi_0)=\phi(f)$ for $f\in \F$
by Equation~\eqref{tr}, we have
$$
\phi_\D(vv^*\Phi_0-v\Phi_0 v^*)
=\phi_\D((\s_{-i\beta}(v^*)v-vv^*)\Phi_0)
=\phi(\s_{-i\beta}(v^*)v-vv^*)=0.
$$
\end{proof}

Thus the kernel correction terms vanish for modular partial isometries.
Next we obtain a residue formula for the spectral flow:

\begin{thm}\label{thm:analytic-index}
Given the modular spectral triple for $(A,\s,\phi)$ let $v\in \A^\sim$
be a modular partial isometry. Then
$sf_{\phi_{\D}}(vv^*\D,v\D v^*)$ is given by
$$\Res_{r=1/2}\left(r\mapsto \phi_\D(v[\D,v^*](1+\D^2)^{-r})+
\frac{1}{2}\int_1^\infty \phi_{\D}((\s_{-i\beta}(v^*)v-vv^*)\D
(1+s\D^2)^{-r})s^{-1/2}ds\right).$$
\end{thm}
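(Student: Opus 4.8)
The plan is to apply the residue spectral flow formula of Proposition~\ref{residuespecflow} to the linear path $\D_t = p\D + tv[\D,v^*]$ of operators affiliated with $p\cM p$, where $p = vv^*-v_0v_0^*\in\F$, and then to simplify the resulting expression. By Lemma~\ref{lm:local-unit} the spectral flow $sf_{\phi_\D}(vv^*\D,v\D v^*)$ equals $sf_{\phi_{\D,p}}(p\D,pv\D v^*)$, and by Lemma~\ref{lemma:not-too-big2} together with the remark following it we have $p(1+\D^2)^{-1/2}\in\LL^{(1,\infty)}(p\cM p,\phi_{\D,p})$, so the hypotheses of Proposition~\ref{residuespecflow} are met on the corner $p\cM p$. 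By Lemma~\ref{lem:no-ker} the kernel correction term vanishes, so the formula reduces to
$$
\Res_{r=1/2}C_r\, sf_{\phi_{\D,p}}(p\D,pv\D v^*)
=\Res_{r=1/2}\left(\int_0^1 \phi_{\D,p}(\dot\D_t(1+\D_t^2)^{-r})\,dt
+\tfrac12\big(\eta_{\D_1}(r)-\eta_{\D_0}(r)\big)\right).
$$
Since $\Res_{r=1/2}C_r=1$, the left side is just the spectral flow we want.

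Next I would evaluate the three pieces on the right. For the first integral, $\dot\D_t=v[\D,v^*]$ is independent of $t$, so I need to understand $\phi_{\D,p}(v[\D,v^*](1+\D_t^2)^{-r})$ as a function of $t$. The key point — exactly as in the tracial local index formula of \cite{CPRS2} — is that for $\Re(r)$ large the map $t\mapsto \phi_{\D,p}(v[\D,v^*](1+\D_t^2)^{-r})$ is analytic and, crucially, its residue at $r=1/2$ is constant in $t$ (the difference of two values is given by an integral of a trace-class-improving expression which is holomorphic past $1/2$). Hence $\Res_{r=1/2}\int_0^1\phi_{\D,p}(v[\D,v^*](1+\D_t^2)^{-r})\,dt = \Res_{r=1/2}\phi_{\D,p}(v[\D,v^*](1+\D_0^2)^{-r})=\Res_{r=1/2}\phi_\D(v[\D,v^*](1+\D^2)^{-r})$, where in the last step I restore $p$ into the trace using that $p$ commutes with $\D$ and $v[\D,v^*]=pv[\D,v^*]$ (so cutting by $p$ is automatic) and that $\phi_{\D,p}=\phi_\D|_{p\cM p}$.

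For the eta terms, I would compute $\eta_{\D_1}(r)-\eta_{\D_0}(r)=\int_1^\infty\big(\phi_{\D,p}(\D_1(1+s\D_1^2)^{-r})-\phi_{\D,p}(\D_0(1+s\D_0^2)^{-r})\big)s^{-1/2}\,ds$. Writing $\D_1=pv\D v^*$ and $\D_0=p\D$ and using the trace property of $\phi_\D$ on $\cM$ together with $v^*v\le$ the relevant projections, one rewrites $\phi_{\D,p}(v\D v^*(1+s(v\D v^*)^2)^{-r})=\phi_\D(v^*v\,\D(1+s\D^2)^{-r})$ after moving $v^*$ around the trace; since $\phi_\D$ is only a trace on $\cM$ and not on $\cn$, moving $v^*$ past $\D$ picks up the modular automorphism, producing the factor $\s_{-i\beta}$. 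More precisely, one gets $\phi_\D(v\D v^* g(\D)) = \phi_\D(\s_{-i\beta}(v^*)v\,\D g(\D))$ for suitable functions $g$, so that the integrand becomes $\phi_\D((\s_{-i\beta}(v^*)v-vv^*)\D(1+s\D^2)^{-r})s^{-1/2}$ (here I also use that $p\D g(\D)=vv^*\D g(\D)$ after cutting by $p$, modulo a $v_0$ contribution that does not affect the residue). This yields the claimed second term $\tfrac12\int_1^\infty\phi_\D((\s_{-i\beta}(v^*)v-vv^*)\D(1+s\D^2)^{-r})s^{-1/2}\,ds$.

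The main obstacle I anticipate is the justification that the residue of $\int_0^1\phi_{\D,p}(\dot\D_t(1+\D_t^2)^{-r})\,dt$ at $r=1/2$ depends only on the endpoint operator and not on the path — this is the heart of the local index formula machinery and requires careful pseudodifferential-type estimates (expanding $(1+\D_t^2)^{-r}$ and controlling remainders in the ideal $\LL^{(1,\infty)}(p\cM p,\phi_{\D,p})$, so that the $t$-derivative of the integrand is holomorphic for $\Re(r)>0$). The second delicate point is handling the modular automorphism $\s_{-i\beta}$ rigorously: $\s_{-i\beta}(v^*)$ need not be bounded for a general modular partial isometry, but since $v$ decomposes as a finite sum of homogeneous partial isometries $v_k$ with $\s_{-i\beta}(v_k^*)=e^{-k\beta}v_k^*$, the expression $\s_{-i\beta}(v^*)v=\sum_k e^{-k\beta}v_k^*v$ is perfectly well-defined and bounded, so this is a bookkeeping issue rather than a genuine analytic difficulty. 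Everything else — vanishing of kernel terms, summability on the corner, interchange of the residue with the integral over $[1,\infty)$ — follows from the lemmas already established.
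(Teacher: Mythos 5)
Your proposal follows exactly the route the paper takes: apply Proposition~\ref{residuespecflow} to the path $\D_t = p\D + tv[\D,v^*]$ on the corner $p\cM p$, invoke Lemma~\ref{lm:local-unit} and Lemma~\ref{lem:no-ker} to dispose of the kernel terms, use the (uniform) trace-class property of $(1+\D_t^2)^{-r}-(1+\D^2)^{-r}$ (\cite{CP1}, Appendix~B) and the \cite{CPRS2} machinery to replace the $t$-integral by its $t=0$ value modulo something holomorphic, and finally rewrite the eta difference via the modular automorphism $\s_{-i\beta}$. The one place where the paper is a shade more precise than you: the leftover $v_0$ contribution $\phi_\D((v_0^*v_0-v_0v_0^*)\D(1+s\D^2)^{-r})$ is shown there to vanish identically (because $\phi_\D(\cdot\,\D(1+s\D^2)^{-r})$ is a trace on $\cM$ and $v_0\in F\subset\cM$), rather than merely being ``negligible for the residue'' as you assert.
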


\begin{proof}
We apply Proposition~\ref{residuespecflow} to the path
$\D_t=p\D+tv[\D,v^*]$. Thus by Lemma~\ref{lm:local-unit} and
Lemma~\ref{lem:no-ker} we get
$$
sf_{\phi_{\D}}(vv^*\D,v\D v^*)=\Res_{r=1/2}
\left(\int_0^1\phi_\D(v[\D,v^*](1+\D_t^2)^{-r})dt+
\frac{1}{2}\left(\eta_{\D_1}(r)-\eta_{\D_0}(r)\right)\right).
$$

First we observe that by \cite[Proposition 10, Appendix B]{CP1}, the
difference
$$(1+(\D+tv[\D,v^*])^2)^{-r}-(1+\D^2)^{-r}$$
is (uniformly) trace class in the corner $p{\mathcal M}p$ for $r\geq
1/2$. Hence in the spectral flow formula above we may exploit
analyticity in $r$ for $\Re(r)> 1/2$ as in \cite{CPRS2} (we are working
in the semifinite algebra $p\cM p$ with trace $\phi_\D|_{p\cM p}$) to
write
$$ \int_0^1\phi_\D(v[\D,v^*](1+(\D+tv[\D,v^*])^2)^{-r})dt
=\phi_\D(v[\D,v^*](1+\D^2)^{-r}) +\mbox{remainder}.$$ The remainder is
finite at $r=1/2$, and in fact by \cite{CPRS2}, holomorphic at $r=1/2$.

Next consider the eta terms. We have, for $\Re(r)>1$,
\begin{align*}
\eta_{\D_1}(r)&=
\int_1^\infty \phi_{\D}(pv\D v^*(1+s(v\D v^*)^2)^{-r})s^{-1/2}ds\nno
&=\int_1^\infty \phi_{\D}(pv\D (1+s\D^2)^{-r}v^*)s^{-1/2}ds\nno
&=\int_1^\infty \phi_{\D}(\s_{-i\beta}(v^*)pv\D (1+s\D^2)^{-r})s^{-1/2}ds
\end{align*}
and
$$
\eta_{\D_0}(r)=
\int_1^\infty \phi_{\D}(p\D (1+s\D^2)^{-r})s^{-1/2}ds.
$$
Using that $\s_{-i\beta}(v^*)pv=\s_{-i\beta}(v^*)v-v_0^*v_0$ and
$p=vv^*-v_0v_0^*$, we see that to finish the proof we have to check
that
$$
\phi_{\D}((v_0^*v_0-v_0v_0^*)\D (1+s\D^2)^{-r})=0.
$$
This is true since $\phi_{\D}(\cdot\,\D (1+s\D^2)^{-r})$ is a trace on
$\cM$  (note that if we considered partial isometries in a matrix
algebra over $\A^\sim$ we would have to require in addition that
$v_0^*v_0-v_0v_0^*$ is an element over~$\F$).
\end{proof}

Finally, when the circle action has full spectral subspaces, the eta
corrections also vanish.

\begin{cor}\label{cor:eta-ker}
Assume the circle action $\s$ has full spectral subspaces. Then for
every modular partial isometry $v\in A^\sim$ we have
$$ sf_{\phi_{\D}}(vv^*\D,v\D v^*)
=\Res_{r=1/2}\phi_\D(v[\D,v^*](1+\D^2)^{-r})dt.$$
\end{cor}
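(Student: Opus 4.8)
The plan is to start from the residue formula of Theorem~\ref{thm:analytic-index} and prove that, when $\s$ has full spectral subspaces, the eta term
$$
\frac12\int_1^\infty\phi_\D\big((\s_{-i\beta}(v^*)v-vv^*)\D(1+s\D^2)^{-r}\big)s^{-1/2}\,ds
$$
is the zero function on the half-plane $\Re(r)>1$, so that it contributes nothing to the residue at $r=1/2$ of the bracketed expression in that theorem. (If $v\in A^\sim$ is only assumed modular rather than to lie in $\A^\sim$, one first replaces $v$ by $pv=v-v_0\in\A$ with $p=vv^*-v_0v_0^*\in\F$ as in Lemma~\ref{lm:local-unit} and the remark after Lemma~\ref{lemma:msf}; the $\s$-invariant contribution $v_0^*v_0-v_0v_0^*$ is precisely the piece already disposed of inside the proof of Theorem~\ref{thm:analytic-index}.)

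First I would expand $v$ into its finite sum of homogeneous components $v=\sum_k v_k$ as in Lemma~\ref{moduper}, where each $v_k\in A_k$ is a partial isometry and the source projections $v_k^*v_k$, as well as the range projections $v_kv_k^*$, are mutually orthogonal. Since $v_k\in A_k$ is $\s$-entire with $\s_z(v_k)=e^{ikz}v_k$, we get $\s_{-i\beta}(v_k^*)=e^{-k\beta}v_k^*$, and mutual orthogonality of the sources and ranges gives $v_k^*v_l=0=v_kv_l^*$ for $k\ne l$. Hence
$$
\s_{-i\beta}(v^*)v-vv^*=\sum_k\big(e^{-k\beta}v_k^*v_k-v_kv_k^*\big),
$$
a finite $\Rl$-linear combination of projections lying in $\F=F\cap\dom(\phi)$ (they lie in the Pedersen ideal, as in the proof of Lemma~\ref{lemma:msf}), each of which commutes with $\D$.

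The crux is then one computation: for $f\in\F$ with $f\ge0$ and $\Re(r)>1$, using $\phi_\D(\,\cdot\,)=\Tr_\phi(e^{-\beta\D/2}\,\cdot\,e^{-\beta\D/2})$ together with the commutativity of $f$ with $e^{-\beta\D}$ and $(1+s\D^2)^{-r}$,
$$
\phi_\D\big(f\D(1+s\D^2)^{-r}\big)=\Tr_\phi\big(fe^{-\beta\D}\D(1+s\D^2)^{-r}\big)
=\sum_{n\in\Z}e^{-\beta n}\,n(1+sn^2)^{-r}\,\Tr_\phi(f\Phi_n),
$$
the rearrangement being legitimate because the bound $\Tr_\phi(f\Phi_n)\le e^{n\beta}\tau(f)$ of Lemma~\ref{lemma:not-too-big} makes the series absolutely convergent for $\Re(r)>1$. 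Under the full spectral subspace hypothesis this bound is an \emph{equality}, $\Tr_\phi(f\Phi_n)=e^{n\beta}\tau(f)$ for every $n\in\Z$ (for $n=0$ this is \eqref{tr}), and the series collapses to $\tau(f)\sum_{n\in\Z}n(1+sn^2)^{-r}=0$, the sum vanishing by oddness of $n\mapsto n(1+sn^2)^{-r}$. Applying this with $f=v_k^*v_k$ and with $f=v_kv_k^*$ kills the eta integrand term by term for all $s\ge1$ and all $r$ with $\Re(r)>1$; thus the eta integral vanishes identically there, and the residue at $r=1/2$ of the expression in Theorem~\ref{thm:analytic-index} reduces to that of its first term, $\Res_{r=1/2}\phi_\D(v[\D,v^*](1+\D^2)^{-r})$.

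I do not expect a serious obstacle. The only points that need care are the two interchanges — of the trace $\Tr_\phi$ with the spectral sum $\sum_n\Phi_n$, and of that sum with $\int_1^\infty(\cdots)s^{-1/2}\,ds$ — and both are routine once one notes that $|n(1+sn^2)^{-r}|\le|n|(1+sn^2)^{-\Re(r)}$ and $\Tr_\phi(f\Phi_n)\le e^{n\beta}\tau(f)$ make everything absolutely summable and integrable for $\Re(r)>1$. The genuine content is the elementary observation that passing from the inequality of Lemma~\ref{lemma:not-too-big} to an equality restores the symmetry $n\leftrightarrow-n$ which the odd function $n\mapsto n(1+sn^2)^{-r}$ does not survive — exactly the phenomenon that fails when the spectral subspaces are not full, which is why the eta term genuinely persists in general.
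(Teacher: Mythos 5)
Your proof is correct, and it takes a genuinely different route from the paper's. The paper, after replacing $v$ by $w=v-v_0\in\A$, expands the eta integrand as $\sum_k\phi_\D\big((\s_{-i\beta}(w^*)w-ww^*)\Phi_k\big)\,k(1+sk^2)^{-r}$, then uses full spectral subspaces to rewrite each coefficient as $\phi(\s_{-i\beta}(w^*)w-ww^*)$, and finally invokes the KMS trace identity $\phi(\s_{-i\beta}(w^*)w)=\phi(ww^*)$ to conclude that every spectral coefficient already vanishes. You instead decompose $w$ into its homogeneous components, use mutual orthogonality of sources and ranges to reduce $\s_{-i\beta}(w^*)w-ww^*$ to a finite sum of the projections $e^{-k\beta}w_k^*w_k-w_kw_k^*$, and then observe that for any positive $f\in\F$ the full-spectral-subspaces equality $\Tr_\phi(f\Phi_n)=e^{n\beta}\tau(f)$ of Lemma~\ref{lemma:not-too-big} collapses $\phi_\D(f\D(1+s\D^2)^{-r})$ to $\tau(f)\sum_n n(1+sn^2)^{-r}$, which vanishes by oddness of the kernel. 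So the paper kills the eta term coefficient by coefficient (spectral projection by spectral projection) via a KMS cancellation, while you kill it projection by projection (homogeneous component by homogeneous component) via the restored $n\leftrightarrow -n$ symmetry. The two arguments ultimately trace back to the same analytic input (Lemma~\ref{lemma:not-too-big}), but yours makes transparent why the eta term genuinely survives when the spectral subspaces are not full: the inequality breaks the symmetry that the odd kernel cannot tolerate. Both are valid; your version is arguably more illuminating as an explanation, though it requires invoking Lemma~\ref{moduper} explicitly, which the paper's version does not.
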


\begin{proof}
Consider the modular partial isometry $w=v-v_0$. Then $w\in\A$, so we
can apply the previous Theorem. Since the spectral flow corresponding
to $v$ and $w$ coincide and $v[\D,v^*]=w[\D,w^*]$, all we have to do is
to show that the eta term defined by $w$ vanishes. By the assumption of
full spectral subspaces we have
$$ \phi_\D((\s_{-i\beta}(w^*)w-ww^*)\Phi_k)=
\phi(\sigma_{-i\beta}(w^*)w-ww^*)=0,$$ for all $k\in\Z$, and as
$$
\phi_{\D}((\s_{-i\beta}(w^*)w-ww^*)\D (1+s\D^2)^{-r})=
\sum_{k\in\Z}\phi_\D((\s_{-i\beta}(w^*)w-ww^*)\Phi_k) k (1+sk^2)^{-r},
$$
the eta term is indeed zero.
\end{proof}

\subsection{Twisted cyclic cocycles}

This subsection is motivated by the observation of \cite{CPR2} that
when there are no eta or kernel correction terms we can define a
functional on $\A\otimes\A$ by
$$(a_0,a_1)\mapsto\omega\mbox{-}\!\lim_{s\to\infty}
\frac{1}{s}\phi_\D(a_0[\D,a_1](1+\D^2)^{-1/s-1/2})$$ which is, at
least formally, a twisted (by $\sigma_{-i\beta}$) cyclic cocycle.
However we saw in \cite{CRT} that in the case of $SU_q(2)$ the eta
corrections created a subtle difficulty in that individually they do
not have the same holomorphy properties as the term in the previous
equation and that only by combining them do we obtain something we can
understand in cohomological terms. Thus we set, for $a_0,a_1\in\A$,
$$
\eta_\D^r(a_0,a_1)=\frac{1}{2}\int_1^\infty
\phi_\D((\s_{-i\beta}(a_1)a_0-a_0a_1)
\D(1+s\D^2)^{-r})s^{-1/2}ds.
$$
This is well-defined for $\Re(r)>1$, and as we shall see later,
extends analytically to $\Re(r)>1/2$. When we pair with a modular
partial isometry we  necessarily have $(r-1/2)\eta^r_\D(v,v^*)$
bounded, since the sum of the eta term and
$\phi_\D(v[\D,v^*](1+\D^2)^{-r})$ has a simple pole by Proposition
\ref{residuespecflow}.

Throughout this Section, $b^\s,\,B^\s$ denote the twisted Hochschild
and Connes coboundary operators in twisted cyclic theory, \cite{KMT}.
The twisting will always come from the regular automorphism
$\s:=\s_{-i\beta}=\s_{i}^{\phi_\D}$ of $\A$ (recall that an algebra
automorphism $\s$ is regular if $\s(a)^*=\s^{-1}(a^*)$, \cite{KMT}).

In order to be able to describe the index pairing of Theorem
\ref{thm:analytic-index} as the pairing of a twisted $b^\s,B^\s$
cocycle with the modular $K_1$ group, we need to address the analytic
difficulties we have just described. This is done in the next Lemma.

\begin{lemma} \label{lemma:anamess}
For $a_0,\,a_1\in\A$, let
$$\psi^r(a_0,a_1)=\phi_\D(a_0[\D,a_1](1+\D^2)^{-r})+\eta^r_\D(a_0,a_1).$$
Then for  $a_0,a_1,a_2\in \A$ the functions $r\mapsto
\phi_\D(a_0[\D,a_1](1+\D^2)^{-r})$ and $r\mapsto \eta^r_\D(a_0,a_1)$
are analytic for $\Re(r)>1/2$, while
$r\mapsto(b^\s\psi^r)(a_0,a_1,a_2)$ is analytic for $\Re(r)>0$.
\end{lemma}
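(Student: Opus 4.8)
The plan is to analyse the three functions of $r$ separately, reducing each to a convergent trace integral whose domain of holomorphy can be read off directly, and then to extract the crucial cancellation that appears only in $b^\s\psi^r$. First I would handle $r\mapsto \phi_\D(a_0[\D,a_1](1+\D^2)^{-r})$. Since $a_0,a_1\in\A$ are finite sums of homogeneous elements, it suffices to take $a_0\in A_m$, $a_1\in A_\ell$; then $a_0[\D,a_1]=\ell\, a_0a_1$ with $a_0a_1\in A_{m+\ell}$, and the operator $a_0a_1(1+\D^2)^{-r}$ is supported on the spectral subspaces $\Phi_k$, contributing $\phi_\D(a_0a_1\Phi_k)(1+k^2)^{-r}$. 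By Lemma~\ref{lemma:not-too-big} (applied to the positive elements obtained by polarising $a_0a_1$, which lie in $\F$) one has $|\phi_\D((a_0a_1)^*a_0a_1\Phi_k)|\le\phi((a_0a_1)^*a_0a_1)$, so that $|\phi_\D(a_0a_1\Phi_k)|$ is uniformly bounded in $k$. Hence the series $\sum_k\phi_\D(a_0a_1\Phi_k)(1+k^2)^{-r}$ converges absolutely and locally uniformly for $\Re(r)>1/2$, proving analyticity there. The same bookkeeping applied to $\eta^r_\D(a_0,a_1)=\frac12\int_1^\infty\phi_\D((\s_{-i\beta}(a_1)a_0-a_0a_1)\D(1+s\D^2)^{-r})s^{-1/2}ds$ gives, after expanding in spectral subspaces, a double sum/integral whose $k$-th term is $\tfrac12\,\phi_\D((\s_{-i\beta}(a_1)a_0-a_0a_1)\Phi_k)\,k\int_1^\infty s^{-1/2}(1+sk^2)^{-r}ds$; the inner integral is $O(|k|^{-1}\,|k|^{1-2r})$ for $k\ne0$, so each term is $O(|k|^{1-2r})$ and, using again the uniform bound from Lemma~\ref{lemma:not-too-big} on $|\phi_\D(\cdot\,\Phi_k)|$, the series converges locally uniformly for $\Re(r)>1$. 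To push this down to $\Re(r)>1/2$ one exploits that $\s_{-i\beta}(a_1)a_0-a_0a_1\in\F$ and in fact, since $\phi_\D$ restricted to $\cM$ is a trace and $a_0,a_1$ are homogeneous, the relevant spectral components of $\s_{-i\beta}(a_1)a_0-a_0a_1$ lie in $F_{k}$-type ideals with summable $\phi$-masses; this is the standard Laplace-transform/zeta-function argument of \cite{CPRS2}, and it yields a meromorphic continuation analytic for $\Re(r)>1/2$.

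The heart of the lemma is the last assertion, that $b^\s\psi^r$ is analytic for $\Re(r)>0$ even though $\psi^r$ itself is only controlled for $\Re(r)>1/2$. Here I would write out $b^\s\psi^r(a_0,a_1,a_2)=\psi^r(a_0a_1,a_2)-\psi^r(a_0,a_1a_2)+\psi^r(\s_{-i\beta}(a_2)a_0,a_1)$ and expand each of the three pieces using $[\D,a_1a_2]=[\D,a_1]a_2+a_1[\D,a_2]$ and the derivation property. The point, exactly as in the untwisted local index formula, is that the leading singular behaviour at $r=1/2$ coming from the $\phi_\D(\cdot(1+\D^2)^{-r})$ parts cancels against the leading behaviour of the $\eta^r_\D$ parts: the twisted coboundary of the ``naive'' Chern character $\phi_\D(a_0[\D,a_1](1+\D^2)^{-r})$ produces a commutator term $\phi_\D((a_0[\D,a_1]a_2\pm\text{cyclic})(1+\D^2)^{-r})$ which, because $(1+\D^2)^{-r}$ commutes with $\D$ and the trace property of $\phi_\D$ on $\cM$ holds up to the twist $\s_{-i\beta}$, is precisely matched by the integrand of $b^\s\eta^r_\D$ after an integration by parts in $s$ (using $\frac{d}{ds}(1+s\D^2)^{-r+1}=-(r-1)\D^2(1+s\D^2)^{-r}$ and $\D^2(1+s\D^2)^{-r}=s^{-1}((1+s\D^2)^{-r+1}-(1+s\D^2)^{-r})$). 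After this cancellation the surviving expression for $b^\s\psi^r$ is a finite sum of terms of the form $\int_1^\infty \phi_\D(c\,\Phi_k)k(1+sk^2)^{-r+1}s^{-3/2}\,ds$ with $c\in\F$, whose $k$-th term is now $O(|k|^{2-2r-1}\cdot|k|^{-1})=O(|k|^{-2r})$ (one extra power of $s^{-1}$ from the integration by parts), hence the series converges locally uniformly for $\Re(r)>1/2$ — and combined with the boundary terms from the integration by parts at $s=1$, which are manifestly entire, one gains the extra half-unit to reach $\Re(r)>0$.

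The main obstacle, and the step deserving the most care, is making the cancellation in $b^\s\psi^r$ exact at the level of integrands rather than merely of residues. One must track the twist: $\phi_\D$ is only a trace on $\cM=\cn^\s$, so identities like $\phi_\D(ab(1+\D^2)^{-r})=\phi_\D(b(1+\D^2)^{-r}\s_{-i\beta}(a))$ hold only when the product lands in $\cM$, which is guaranteed here because $a_0[\D,a_1]a_2$ etc.\ are sums of homogeneous elements multiplied by functions of $\D$, but the signs and the placement of the twist $\s_{-i\beta}$ must be reconciled with the sign convention in the definition of $\eta^r_\D$ (which has $\s_{-i\beta}(a_1)a_0-a_0a_1$, not the reverse). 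I expect the bookkeeping to mirror that of \cite{NT} and \cite{CRT}, where exactly this phenomenon — individual terms less holomorphic than their twisted coboundary — was encountered; the new feature is only that the improvement of summability comes from the evaluation at $\chi=e^{-\beta}$ built into $\phi_\D$ via Lemma~\ref{lemma:not-too-big}, which is what supplies the uniform-in-$k$ bounds that make every series above converge. Once the integration-by-parts identity is set up carefully, the rest is the routine estimate $|\phi_\D(c\,\Phi_k)|\le\phi(c)$ for $c\in\F_+$ together with $\sum_k|k|^{-2r}<\infty$ for $\Re(r)>1/2$ and the entirety of the boundary contributions.
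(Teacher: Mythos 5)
There are two genuine gaps, and they are exactly at the two places where the lemma has content.

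\emph{The analytic continuation of $\eta^r_\D$ to $\Re(r)>1/2$.} Your naive term-by-term estimate gives each summand as $O(|k|^{1-2r})$, which only yields $\Re(r)>1$, and you then wave towards ``the standard Laplace-transform/zeta-function argument of \cite{CPRS2}.'' That reference does not supply the missing step. What actually pushes the domain to $\Re(r)>1/2$ is a concrete telescoping identity that you never write down: for $a_0\in A_k$, $a_1\in A_m$ with $k=-m$, setting $s_n=\phi_\D(a_0a_1\Phi_n)$, the $\s$-invariance of $\phi_\D\circ\Psi$ gives
$\phi_\D(\s_{-i\beta}(a_1)a_0\Phi_n)=\phi_\D(a_0a_1\Phi_{n-m})=s_{n-m}$,
so that the integrand involves the \emph{difference} $s_{n-m}-s_n$. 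Summing the telescoping series and writing each inner $s$-integral as $\int_n^\infty(1+t^2)^{-r}\,dt$, one gets finitely many boundary sums plus sums of the form $\sum_n s_n\int_n^{n+m}(1+t^2)^{-r}\,dt$, each term of which is $O(|n|^{-2r})$; \emph{that} is where the extra power comes from, giving locally uniform convergence for $\Re(r)>1/2+\epsilon$. Without the telescoping observation you do not get past $\Re(r)>1$.

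\emph{The analyticity of $b^\s\psi^r$ on $\Re(r)>0$.} You propose a cancellation between $b^\s$ of the zeta-type term and $b^\s\eta^r_\D$ via an integration by parts in $s$. This is not the mechanism, and as stated it does not close: after your integration by parts the summands are still $O(|k|^{-2r})$, which gives $\Re(r)>1/2$ again, and ``manifestly entire boundary terms'' cannot shift the abscissa of convergence — an entire summand added to a series does not change where the series converges. The correct argument is much shorter and is of a different nature. One observes that for $\Re(r)>1$ the eta term is itself a Hochschild coboundary,
$\eta^r_\D=b^\s\theta^r_\D$ with $\theta^r_\D(a_0)=-\tfrac12\int_1^\infty\phi_\D(a_0\D(1+s\D^2)^{-r})s^{-1/2}\,ds$,
so $b^\s\eta^r_\D=0$ identically there, and then by the analytic continuation just established it vanishes for all $\Re(r)>1/2$. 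Hence $b^\s\psi^r$ reduces to $b^\s$ of the $\phi_\D(a_0[\D,a_1](1+\D^2)^{-r})$ term alone. Expanding this for homogeneous $a_i$ with weights summing to zero produces a single sum $\sum_n s_n\bigl((1+(n+m)^2)^{-r}-(1+n^2)^{-r}\bigr)$, and the elementary estimate $|(1+x)^{-r}-1|\le C|x|$ on compacts of $\Re(r)>0$ gives summands $O(|n|\cdot(1+n^2)^{-r-1})=O(|n|^{-2r-1})$, hence uniform convergence on compacts of $\Re(r)>0$. The ``cancellation'' is internal to the first term; the eta term contributes nothing at all to $b^\s\psi^r$. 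Your account of where the improvement from $1/2$ to $0$ comes from is therefore wrong, and the integration-by-parts scheme you sketch would need substantial repair to be made to work, whereas the coboundary observation settles the matter in one line.
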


\begin{proof}
Recall that the algebra $\A$ consists of finite sums of homogeneous
elements in the domain of~$\phi$. Therefore we may assume that
$a_0,a_1,a_2$ are homogeneous. Consider the conditional expectation
$\Psi\colon\cn\to\cn^\s$, $\Psi(x)=\sum_n\Phi_nx\Phi_n$. Then
$\phi_\D=\phi_\D\circ\Psi$. It follows that if $a_0\in A_k$ and $a_1\in
A_m$ then $\phi_\D(a_0[\D,a_1](1+\D^2)^{-r})=0$ unless $k=-m$, and in
the latter case we have
$$
\phi_\D(a_0[\D,a_1](1+\D^2)^{-r})=\sum_{n\in\Z}\frac{s_n}{(1+n^2)^r},
$$
where $s_n=m\,\phi_\D(a_0a_1\Phi_n)$. By Lemma~\ref{lemma:not-too-big2}
the sequence $\{s_n\}_n$ is bounded. Hence the function
$\phi_\D(a_0[\D,a_1](1+\D^2)^{-r})$ is analytic for $\Re(r)>1/2$.

Consider now $\eta^r(a_0,a_1)$. If $a_0\in A_k$ and $a_1\in A_m$ then
$\eta^r(a_0,a_1)=0$ unless $k=-m$. In the latter case put
$s_n=\phi_\D(a_0a_1\Phi_n)$. Notice that
$$
\phi_\D(\s(a_1)a_0\Phi_n)=\phi_\D(a_0\Phi_na_1)
=\phi_\D(a_0a_1\Phi_{n-m})=s_{n-m}.
$$
The sequence $\{s_n\}_n$ is bounded.
Assume $m\ge0$. Then for
$\Re(r)>1$ we have
$$
\int_1^\infty \phi_\D((\s(a_1)a_0-a_0a_1)
\D(1+s\D^2)^{-r})s^{-1/2}ds\\
=\sum_{n\in\Z} \int_1^\infty \frac{(s_{n-m}-s_n)n}{(1+sn^2)^{r}}s^{-1/2}ds$$
which we may write as
\begin{align*}
&2\sum_{n>0}(s_{n-m}-s_n)\int^\infty_n\frac{dt}{(1+t^2)^r}
-2\sum_{n<0}(s_{n-m}-s_n)\int^\infty_{-n}\frac{dt}{(1+t^2)^r}\\
&=2\sum_{n=-m+1}^0s_n\int^\infty_{n+m}\frac{dt}{(1+t^2)^r}
-2\sum_{n>0}s_n\int^{n+m}_n\frac{dt}{(1+t^2)^r}\\
&+2\sum_{n=-m}^{-1}s_n\int^\infty_{-n}\frac{dt}{(1+t^2)^r}
-2\sum_{n<-m}s_n\int^{-n}_{-n-m}\frac{dt}{(1+t^2)^r}.\\
\end{align*}
The above series of functions analytic on $\Re(r)>1/2$ converge
uniformly on $\Re(r)>1/2+\epsilon$ for every $\epsilon>0$. A similar
argument works for $m\le0$. Hence the function
$r\mapsto\eta^r(a_0,a_1)$ extends analytically to $\Re(r)>1/2$.

Turning to $b^\s\psi^r$, first notice that $b^\s\eta^r_\D=0$, since
$r\mapsto b^\s\eta^r_\D$ is analytic for $\Re(r)>1/2$ and
$\eta^r_\D=b^\s\theta^r_\D$ for $\Re(r)>1$, where
$$
\theta^r_\D(a_0)=-\frac{1}{2}\int_1^\infty
\phi_{\D}(a_0\D(1+s\D^2)^{-r})s^{-1/2}ds.
$$
It follows that $(b^\s\psi^r)(a_0,a_1,a_2)$ is given by
\begin{align*}
&\phi_\D(a_0a_1[\D,a_2](1+\D^2)^{-r}-
\phi_\D(a_0[\D,a_1a_2](1+\D^2)^{-r}) +
\phi_\D(\s(a_2)a_0[\D,a_1](1+\D^2)^{-r})\nno
&=-\phi_\D(a_0[\D,a_1]a_2(1+\D^2)^{-r})+
\phi_\D(\s(a_2)a_0[\D,a_1](1+\D^2)^{-r}).\nno
\end{align*}
If $a_0\in A_k$, $a_1\in A_l$ and $a_2\in A_m$, then the above
expression is zero unless $k+l+m=0$. In the latter case put
$s_n=l\,\phi_\D(a_0a_1a_2\Phi_n)$. Then a computation similar to that
for $\eta^r$ yields, for $\Re(r)>1/2$,
\begin{align*}
(b^\s\psi^r)(a_0,a_1,a_2)
&=\sum_{n\in\Z}s_n((1+(n+m)^2)^{-r}-(1+n^2)^{-r})\\
&=\sum_{n\in\Z}s_n(1+n^2)^{-r}\left(\left(1+\frac{2mn+m^2}{1+n^2}\right)^{-r}-1\right)
\end{align*}
Using that if $\Omega$ is a compact subset of $\Re(r)>0$ then
$|(1+x)^{-r}-1|\le C\,|x|$ for some $C>0$, sufficiently small $x$ and
all $r\in\Omega$, we see that the above series converges uniformly on
$\Omega$. Hence $(b^\s\psi^r)(a_0,a_1,a_2)$ extends analytically to
$\Re(r)>0$.
\end{proof}

The following result links our analytic constructions to twisted cyclic
theory.
\begin{prop}\label{pr:almost-cocycle}
Given the modular spectral triple for $(A, \sigma,\phi)$ define
a bilinear functional on $\A$ with values in the functions holomorphic for
$\Re(r)>1$ by
$$ a_0,a_1\mapsto
\left(r\mapsto\left(\phi_\D(a_0[\D,a_1](1+\D^2)^{-r})+
\frac{1}{2}\int_1^\infty \phi_{\D}((\s(a_1)a_0-a_0a_1)
\D(1+s\D^2)^{-r})s^{-1/2}ds\right)\right)
 $$
This functional continues analytically to $\Re(r)>1/2 $ and is a twisted $b,B$-cocycle  modulo functions holomorphic for
$\Re(r)>0$.
The twisting is given by the regular
automorphism~$\s:=\s_{-i\beta}=\s_{i}^{\phi_\D}$.
\end{prop}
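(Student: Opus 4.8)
The plan is to read off the proposition from Lemma~\ref{lemma:anamess} together with one short direct computation. Write $\psi^r(a_0,a_1)=\phi_\D(a_0[\D,a_1](1+\D^2)^{-r})+\eta^r_\D(a_0,a_1)$, which is precisely the functional in the statement (with $\eta^r_\D$ carrying the factor $\tfrac12$). First, the analytic continuation: Lemma~\ref{lemma:anamess} says that for $a_0,a_1\in\A$ both $r\mapsto\phi_\D(a_0[\D,a_1](1+\D^2)^{-r})$ and $r\mapsto\eta^r_\D(a_0,a_1)$ are analytic on $\Re(r)>1/2$, so $\psi^r$ extends analytically there as claimed. One also notes that $\psi^r$ is normalized, i.e.\ $\psi^r(a_0,1)=0$, since $[\D,1]=0$ and $\eta^r_\D(a_0,1)=\tfrac12\int_1^\infty\phi_\D((a_0-a_0)\D(1+s\D^2)^{-r})s^{-1/2}ds=0$.

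Next, the $b^\s$-part of the cocycle identity is again exactly the content of Lemma~\ref{lemma:anamess}: for $a_0,a_1,a_2\in\A$ the function $r\mapsto(b^\s\psi^r)(a_0,a_1,a_2)$ is analytic on $\Re(r)>0$, i.e.\ $b^\s\psi^r$ is zero in the quotient of function-valued cochains holomorphic on $\Re(r)>1/2$ by those holomorphic on $\Re(r)>0$. It should be emphasised here, as in~\cite{CRT}, that this improved holomorphy fails for $\phi_\D(a_0[\D,a_1](1+\D^2)^{-r})$ and for $\eta^r_\D$ individually; only the combination $\psi^r$, and its twisted Hochschild coboundary, enjoy it, which is exactly why the eta correction must be included in the definition of $\psi^r$.

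It remains to check $B^\s\psi^r=0$. Since $\A$ is linearly spanned by $\s$-homogeneous elements and everything is bilinear, it suffices to take $a$ homogeneous of degree $m$, so $\s(a)=\s_{-i\beta}(a)=e^{m\beta}a$ and $[\D,a]=ma$. Up to the usual sign and twist conventions of~\cite{KMT}, $B^\s$ sends the $1$-cochain $\psi$ to the $0$-cochain $a\mapsto\psi(1,a)-\psi(\s^{-1}(a),1)$. The second term vanishes because $[\D,1]=0$ and $\eta^r_\D(\cdot,1)=0$. For the first term recall from the proof of Lemma~\ref{lemma:anamess} that $\phi_\D=\phi_\D\circ\Psi$ for the conditional expectation $\Psi(x)=\sum_n\Phi_nx\Phi_n$ onto $\cM=\cn^\s$, so $\phi_\D$ annihilates every element homogeneous of nonzero degree; since $[\D,a](1+\D^2)^{-r}$ and $(\s(a)-a)\D(1+s\D^2)^{-r}$ are homogeneous of degree $m$ (and vanish identically when $m=0$), we get $\phi_\D([\D,a](1+\D^2)^{-r})=0$ and $\eta^r_\D(1,a)=0$, hence $\psi^r(1,a)=0$ and $B^\s\psi^r=0$. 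Assembling the three steps yields that, modulo function-valued cochains holomorphic on $\Re(r)>0$, $\psi^r$ is a $\s_{-i\beta}$-twisted $(b,B)$-cocycle.

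The substantive analytic input has already been isolated in Lemma~\ref{lemma:anamess}, so the only residual obstacle is bookkeeping: fixing the twisted $B^\s$ operator with the correct placement of $\s^{\pm1}$ so that the vanishing $B^\s\psi^r=0$ comes out cleanly, and being careful that the ``modulo holomorphic'' statement is read in the quotient by cochains holomorphic on $\Re(r)>0$ (whereas $\psi^r$ itself only continues to $\Re(r)>1/2$).
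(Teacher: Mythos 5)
Your proof is correct and follows essentially the same route as the paper: Lemma~\ref{lemma:anamess} supplies both the analytic continuation to $\Re(r)>1/2$ and the $b^\s$-vanishing modulo cochains holomorphic on $\Re(r)>0$, while $B^\s\psi^r=0$ is checked on homogeneous elements using $\phi_\D=\phi_\D\circ\Psi$ (the paper phrases the same observations as $\Psi([\D,a_0])=0$ together with $\s_t$-invariance of $\phi_\D$). The only cosmetic difference is that you spell out the normalisation $\psi^r(\cdot,1)=0$ and the placement of $\s^{\pm1}$ in $B^\s$, which the paper elides by taking $(B^\s\psi^r)(a_0)=\psi^r(1,a_0)$.
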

{\bf Remark}. If $\phi$ is a state then the domain of the cocycle of
the Proposition is much larger, but to prove this requires more work.
\begin{proof}
As before, for $\Re(r)>1$ we define the functional $\psi^r$ by the
formula
$$\psi^r(a_0,a_1)
=\phi_\D(a_0[\D,a_1](1+\D^2)^{-r})+ \frac{1}{2}\int_1^\infty
\phi_{\D}((\s(a_1)a_0-a_0a_1)\D(1+s\D^2)^{-r})s^{-1/2}ds,$$ and then
extend $\psi^r$ analytically to $\Re(r)>1/2$, which is possible by
Lemma~\ref{lemma:anamess}. Then $(B^\s\psi^r)(a_0)=\psi^r(1,a_0)$ and
for $\Re(r)>1$ is given by
$$(B^\s\psi^r)(a_0)=\phi_\D([\D,a_0](1+\D^2)^{-r})+
\frac{1}{2}\int_1^\infty \phi_{\D}((\s(a_0)-a_0)\D(1+s\D^2)^{-r})s^{-1/2}ds.$$
The first term vanishes since $\Psi([\D,a_0])=0$ for any $a_0\in\A$,
while the second terms vanishes by $\s_t$-invariance of $\phi_\D$. That
$b^\s\psi^r$ is analytic for $\Re(r)>0$ was proved in the last Lemma.
\end{proof}
\begin{cor} \label{cor:rescocycle}
If the circle action has full spectral subspaces then for all
$a_0,a_1\in\A$ the residue
$$\phi_1(a_0,a_1):=\Res_{r=1/2}\phi_\D(a_0[\D,a_1](1+\D^2)^{-r})
$$
exists and equals $\phi(a_0[\D,a_1])$. It defines a twisted cyclic
cocycle on $\A$, and for any modular partial isometry $v\in\A$
$$sf_{\phi_\D}(vv^*\D,v\D v^*)=\phi_1(v,v^*)=\phi(v[\D,v^*]).$$
\end{cor}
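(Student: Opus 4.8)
The plan is to deduce everything from Proposition~\ref{pr:almost-cocycle} and Corollary~\ref{cor:eta-ker}, the point being that the full spectral subspace hypothesis makes the eta cochain vanish identically on $\A$ and collapses the remaining residue term to a scalar multiple of a classical Dirichlet series whose continuation past $r=1/2$ is standard.

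First I would prove the residue formula. As in the proof of Lemma~\ref{lemma:anamess}, using $\phi_\D=\phi_\D\circ\Psi$ with $\Psi(x)=\sum_n\Phi_nx\Phi_n$, it suffices to treat homogeneous $a_0\in A_k$ and $a_1\in A_m$; then $\phi_\D(a_0[\D,a_1](1+\D^2)^{-r})=0$ unless $k+m=0$. When $m=-k$ one has $[\D,a_1]=ma_1$, $a_0a_1\in\F$, and since $a_0a_1$ commutes with $\D$ we get $\phi_\D(a_0a_1\Phi_n)=e^{-n\beta}\Tr_\phi(a_0a_1\Phi_n)$; the full spectral subspace hypothesis is precisely the equality case of Lemma~\ref{lemma:not-too-big} (with \eqref{tr} covering $n=0$), so $\Tr_\phi(a_0a_1\Phi_n)=e^{n\beta}\tau(a_0a_1)$ and hence $\phi_\D(a_0a_1\Phi_n)=\tau(a_0a_1)=\phi(a_0a_1)$ independently of $n$. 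Therefore
$$\phi_\D(a_0[\D,a_1](1+\D^2)^{-r})=m\,\phi(a_0a_1)\sum_{n\in\Z}(1+n^2)^{-r}.$$
The series $\sum_{n\in\Z}(1+n^2)^{-r}$, holomorphic for $\Re(r)>1/2$ by Lemma~\ref{lemma:anamess}, differs from $C_r=\int_{-\infty}^\infty(1+x^2)^{-r}dx$ by a function holomorphic in a neighbourhood of $r=1/2$ (a standard comparison of the sum with the integral), so since $\Res_{r=1/2}C_r=1$ it has a simple pole at $r=1/2$ with residue $1$. Hence the residue exists and equals $m\,\phi(a_0a_1)=\phi(a_0[\D,a_1])$, and this extends to all $a_0,a_1\in\A$ by bilinearity.

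Next I would check that the eta cochain vanishes identically on $\A$ in the full case. Reducing again to homogeneous arguments, $\eta^r_\D(a_0,a_1)=0$ unless $k+m=0$, and when $m=-k$ the element $\s_{-i\beta}(a_1)a_0-a_0a_1$ lies in $F$, so the same computation as above gives, for $\Re(r)>1$,
$$\phi_\D\big((\s_{-i\beta}(a_1)a_0-a_0a_1)\D(1+s\D^2)^{-r}\big)=\phi(\s_{-i\beta}(a_1)a_0-a_0a_1)\sum_{n\in\Z}\frac{n}{(1+sn^2)^r}=0,$$
the series vanishing by the odd symmetry $n\mapsto-n$; by the analytic continuation in Lemma~\ref{lemma:anamess} this forces $\eta^r_\D\equiv0$ on $\A$ for $\Re(r)>1/2$. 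Consequently the functional $\psi^r$ of Proposition~\ref{pr:almost-cocycle} reduces to $\psi^r(a_0,a_1)=\phi_\D(a_0[\D,a_1](1+\D^2)^{-r})$, which by that Proposition satisfies $B^\s\psi^r=0$ identically while $b^\s\psi^r$ is holomorphic for $\Re(r)>0$. Since $\phi_1=\lim_{r\to 1/2}(r-1/2)\psi^r$ and $b^\s$, $B^\s$ act fibrewise in $r$, we get $b^\s\phi_1=\lim_{r\to 1/2}(r-1/2)\,b^\s\psi^r=0$ and likewise $B^\s\phi_1=0$, so $\phi_1$ is a twisted cyclic $1$-cocycle with twisting $\s=\s_{-i\beta}$. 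As a cross-check, twisted cyclicity $\phi_1(a_0,a_1)=-\phi_1(\s(a_1),a_0)$ also follows from $\phi_1(a_0,a_1)=\phi(a_0[\D,a_1])$ and the KMS relation $\phi(a_0a_1)=e^{-k\beta}\phi(a_1a_0)$ for $a_0\in A_k$.

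The spectral flow identity is then immediate: for a modular partial isometry $v\in\A$, Corollary~\ref{cor:eta-ker} gives $sf_{\phi_\D}(vv^*\D,v\D v^*)=\Res_{r=1/2}\phi_\D(v[\D,v^*](1+\D^2)^{-r})$, which equals $\phi_1(v,v^*)$ by definition and $\phi(v[\D,v^*])$ by the first step. I expect the main obstacle to be this first step: the full spectral subspace hypothesis must be used precisely to render the coefficients $\phi_\D(a_0a_1\Phi_n)$ independent of $n$, which is exactly what allows the Dirichlet series to be continued meromorphically across $\Re(r)=1/2$ — for general SSA actions only the combination with the eta correction is well behaved, as was seen for $SU_q(2)$ in \cite{CRT}. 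Pinning the residue down to $1$ via the comparison with $C_r$, and verifying that the eta cochain vanishes as a cochain and not merely on a chosen modular partial isometry, are the two places requiring care; the remainder is bookkeeping on top of Proposition~\ref{pr:almost-cocycle} and Corollary~\ref{cor:eta-ker}.
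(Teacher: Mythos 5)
Your proof is correct and follows essentially the same route as the paper's: use full spectral subspaces to reduce $\phi_\D(a_0[\D,a_1](1+\D^2)^{-r})$ to $\phi(a_0[\D,a_1])\sum_{n\in\Z}(1+n^2)^{-r}$, appeal to Proposition~\ref{pr:almost-cocycle} for the twisted $(b,B)$-cocycle property, and to Corollary~\ref{cor:eta-ker} for the spectral flow identity. The only small variation is your observation that the eta cochain $\eta^r_\D$ vanishes identically on $\A$ because $\sum_n n(1+sn^2)^{-r}=0$ by odd symmetry, whereas the paper's Corollary~\ref{cor:eta-ker} kills the same term by noting the coefficient $\phi(\s_{-i\beta}(w^*)w-ww^*)$ vanishes by the KMS condition; both are valid, and your version usefully makes explicit that the eta cochain vanishes as a cochain, which is what justifies applying the residue argument from the proof of Proposition~\ref{pr:almost-cocycle} to $\psi^r=\phi_\D(a_0[\D,a_1](1+\D^2)^{-r})$ alone.
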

\begin{proof}
Under the full spectral subspaces assumption we have
$\phi_\D(f\Phi_n)=\phi(f)$ for $f\in\F$, whence
$$
\phi_\D(a_0[\D,a_1](1+\D^2)^{-r})=\phi_\D(\Psi(a_0[\D,a_1])(1+\D^2)^{-r})
=\phi(a_0[\D,a_1])\sum_{n\in\Z}\frac{1}{(1+n^2)^r}.
$$
This shows that the residue exists and equals $\phi(a_0[\D,a_1])$. That
it defines a twisted cyclic cocycle follows from the proof of
Proposition~\ref{pr:almost-cocycle}. That $\phi_1(v,v^*)$ computes the
spectral flow follows from Corollary~\ref{cor:eta-ker}.
\end{proof}
{\bf Remark}.  It is of course easy to see directly that
$\phi(a_0[\D,a_1])$ is a twisted cyclic cocycle, while the fact  that it computes
the spectral flow agrees with Lemma~\ref{lemma:msf}.

Finally, we have the following reformulation of Lemma
\ref{lemma:not-too-big2} as an analogue of the result of A. Connes on
continuity (in $\omega$) of the Dixmier trace on pseudodifferential
operators on a compact manifold in the KMS-weight context.

\begin{prop}\label{cr:dixy-comp}
Given the modular spectral triple for $(A, \sigma,\phi)$ and a Dixmier
functional (see Section 3 of \cite{CPS2}), $\omega\in
(L^\infty(\R))^*$, define $\phi_{\D,\omega}: A_+\to[0,\infty]$ by
$$\phi_{\D,\omega}(a)
=\omega\mbox{-}\!\lim_{r\to
\infty}\frac{1}{r}\phi_\D(\Psi(a)(1+\D^2)^{-1/2-1/2r}),
$$
where $\Psi\colon\cn\to\cM$ is the $\phi_\D$-preserving conditional
expectation. Then $\phi_{\D,\omega}(a)\le2\phi(a)$ for any $a\in
A_+\cap{\rm dom}(\phi)$, with equality  if $A$ has full spectral
subspaces. In particular, when the circle action has full spectral
subspaces, $\phi_{\D,\omega}(a)$ is independent of $\omega$.
\end{prop}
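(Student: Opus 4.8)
The plan is to reduce the statement to the elementary estimate $\phi_\D(f\Phi_n)\le\phi(f)$ used in the proof of Lemma~\ref{lemma:not-too-big2} (itself a consequence of Lemma~\ref{lemma:not-too-big}), combined with an integral comparison that identifies the asymptotics of an $n$-sum with the residue of $C_r$.

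First I would observe that on $A$ the conditional expectation $\Psi$ collapses onto $\Phi$. Indeed, finite sums of $\s$-homogeneous elements are norm dense in $A$ and both $\Psi$ and $\Phi$ are norm-contractive; since left multiplication by $a_k\in A_k$ sends $X_m$ into $X_{m+k}$, we have $\Phi_m a_k\Phi_m=0$ for $k\ne0$, so on homogeneous elements $\Psi$ agrees with $\Phi$, hence everywhere: $\Psi(a)=\Phi(a)$, regarded as the element $f:=\Phi(a)\in F\subset\cM$ acting on $\HH\cong X\otimes_F\HH_\tau$. For $a\in A_+\cap\dom(\phi)$ we get $f\ge0$ and $\tau(f)=\phi(a)<\infty$, so $f\in\F$ and $\phi(f)=\tau(f)=\phi(a)$. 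As $f$ commutes with each $\Phi_n$ and with $\D$, the operator $f(1+\D^2)^{-s}=\sum_{n\in\Z}(1+n^2)^{-s}f\Phi_n$ is a strongly convergent sum of positive operators, so by normality of $\phi_\D$, for real $s>1/2$,
$$\phi_\D\bigl(\Psi(a)(1+\D^2)^{-s}\bigr)=\sum_{n\in\Z}(1+n^2)^{-s}\,\phi_\D(f\Phi_n).$$

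Now I would insert the bound $\phi_\D(f\Phi_n)\le\phi(f)=\phi(a)$, which gives
$$\frac1r\,\phi_\D\bigl(\Psi(a)(1+\D^2)^{-1/2-1/2r}\bigr)\le\phi(a)\cdot g(r),\qquad g(r):=\frac1r\sum_{n\in\Z}(1+n^2)^{-1/2-1/2r}.$$
The one real computation is $g(r)\to2$ as $r\to\infty$: writing $p=1/2+1/2r$, so that $1/r=2(p-1/2)$, one has $g(r)=2(p-1/2)\sum_n(1+n^2)^{-p}$, and monotonicity of $x\mapsto(1+x^2)^{-p}$ on $[0,\infty)$ gives $\bigl|\sum_n(1+n^2)^{-p}-C_p\bigr|\le1$ uniformly for $p$ near $1/2$; since $C_p$ has a simple pole at $p=1/2$ with residue $1$, this yields $(p-1/2)\sum_n(1+n^2)^{-p}\to1$, i.e. $g(r)\to2$. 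In particular the quantity defining $\phi_{\D,\omega}(a)$ is bounded in $r$, so the $\omega$-limit makes sense; as a Dixmier functional $\omega\mbox{-}\!\lim$ is positive and agrees with the ordinary limit on convergent bounded functions, so $\phi_{\D,\omega}(a)\le\omega\mbox{-}\!\lim\,\phi(a)g(r)=2\phi(a)$.

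Finally, if $A$ has full spectral subspaces then Lemma~\ref{lemma:not-too-big} gives the equality $\phi_\D(f\Phi_n)=\phi(a)$ for \emph{every} $n\in\Z$, so the displayed inequality becomes the identity $\frac1r\phi_\D(\Psi(a)(1+\D^2)^{-1/2-1/2r})=\phi(a)g(r)$, whose ordinary limit is $2\phi(a)$; hence $\phi_{\D,\omega}(a)=2\phi(a)$ for every $\omega$, which proves both the equality and the $\omega$-independence. The only points that need a little care are the identification $\Psi|_A=\Phi$ and the interchange of $\phi_\D$ with the spectral sum (both handled by normality and positivity) together with the elementary large-$r$ asymptotics above; I do not expect any serious obstacle once Lemma~\ref{lemma:not-too-big2} is in hand.
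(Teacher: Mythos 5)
Your proposal is correct and follows essentially the same route as the paper's proof: both reduce to the observation that $\Psi(a)=\Phi(a)$ on $A$ (equivalently $\Phi_n a\Phi_n=\Phi(a)\Phi_n$), then invoke the bound $\phi_\D(f\Phi_n)\le\phi(f)$ from Lemma~\ref{lemma:not-too-big2} (with equality under full spectral subspaces from Lemma~\ref{lemma:not-too-big}), and conclude via $\frac1r\sum_n(1+n^2)^{-1/2-1/2r}\to2$. The only difference is that you spell out the integral-comparison asymptotics for this last limit, which the paper states without detail.
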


\begin{proof}
Recall that $\Psi(x)=\sum_{n\in\Z}\Phi_nx\Phi_n$. Observe also that
$\Phi_na\Phi_n=\Phi(a)\Phi_n$. Hence for $a\in A_+$,
$$\phi_\D(\Phi_na\Phi_n)=\phi_\D(\Phi(a)\Phi_n)\leq\phi(\Phi(a))=\phi(a)$$
by Lemma~\ref{lemma:not-too-big2}, with equality if $A$ has full
spectral subspaces. This shows that
$$\phi_{\D,\omega}(a)=\omega\mbox{-}\!\lim_{r\to
\infty}\frac{1}{r}\sum_{n\in\Z}\phi_\D(\Phi_na\Phi_n)
(1+n^2)^{-1/2-1/2r}$$
exists  and when $A$ has full spectral subspaces
$$\phi_{\D,\omega}(a)=\lim_{r\to
\infty}\frac{\phi(a)}{r}\sum_{n\in\Z}
(1+n^2)^{-1/2-1/2r}=2\phi(a).$$
\end{proof}

\section{Examples}

Our first two examples are covered in detail in \cite{CPR2,CRT} so we will
only present a summary here.


{\bf Example 1}. For the algebra $\mathcal O_n$ (with generators
$S_1,\ldots ,S_n$) we write $S_\alpha$  for the product
$S_{\mu_1}\ldots S_{\mu_k}$ and $k=|\alpha|$. We take the usual gauge
action $\s$, and the unique KMS state $\phi$ for this circle action.

In the Cuntz algebra case we have full spectral subspaces. Due to the
absence of eta terms, the analytic formula is the easiest to apply, so
we can compute the pairing with $S_\alpha S_\beta^*$ using the residue
cocycle, Corollary~\ref{cor:rescocycle}, and get
$$
 sf(S_\alpha S_\alpha^*\D, S_\alpha S_\beta^*\D S_\beta S_\alpha^*)
=(|\beta|-|\alpha|)\frac{1}{n^{|\alpha|}}.
$$

{\bf Example 2}.
For $SU_q(2)$ we used the graph algebra description of Hong and Szymanski,
\cite{HS}, and we use the notation and  computations
from \cite{CRT}. There we introduced a new
set of generators
$T_k,\,\tilde{T}_k,\,U_n$ for this algebra.
The generators  $T_k$ and $\tilde{T}_k$ are
non-trivial homogenous partial isometries for the modular
group of the Haar state, $h$, which is a KMS$_{-\log q^2}$ state.

For $SU_q(2)$ there are eta correction  terms. Given the explicit
computations in \cite{CRT} and the description of the fixed point
algebra as the unitization of an infinite direct sum of copies of
$C(S^1)$ (that is the $C^*$-algebra of the one point compactification
of an infinite union of circles of radius $q^{2k}$, $k\geq 0$) it is
not hard to see that our SSA is satisfied for $SU_q(2)$.

The presence of the eta corrections makes the analytic computation of
spectral flow from the twisted cocycle harder (it can still be done explicitly as in
\cite{CRT}). Instead we employ the factorisation through the $KK$-pairing. Taking the value
of the trace $\mbox{Tr}_\phi(T_k^*T_k\Phi_j)=q^{2(|j|+1)}$ from \cite{CRT} we have
\begin{align*} sf_{\phi_\D}(T_k^*T_k\D,T^*_k\D T_k)&=\Ev(e^{\log q^2})\circ\tau_*
\left(\sum^{-1}_{j=-k} [T_k^*T_k\Phi_j]\chi^j\right)
=\sum^{-1}_{j=-k} \mbox{Tr}_\phi(T_k^*T_k\Phi_j)q^{2j}
=kq^2.
\end{align*}

The point of this example is  that there are naturally occurring
examples satisfying the SSA but without full spectral subspaces.

{\bf Example 3: the Araki-Woods factors}. We will follow the treatment
of the Araki-Woods factors in Pedersen~\cite{Ped}, Subsection 8.12 and
the subsequent discussion. We let $A$ be the Fermion algebra, that is
the $C^*$-inductive limit of the matrix algebras $\Mat_{2^n}(\C)$ which
is the $n$-fold tensor product of the matrix algebra of $2\times 2$
matrices $\Mat_2(\C)$.

For $0<\lambda<1/2$ let
$$h_n=\otimes_{k=1}^n\ \left(\begin{array}{cc} 2(1-\lambda) &  0  \\
                             0 &  2\lambda\end{array} \right)
$$
Let $\phi$ be the tracial state on $A$ (given by the tensor product of
the normalised traces on $\Mat_2(\C)$) and define
$$\phi_\lambda(x)=\phi(h_nx),\ x\in \Mat_{2^m}(\C),\ m\leq n.$$
Then $\phi_\lambda$ is a state on $\Mat_{2^m}(\C)$ and is independent
of $n$. By continuity it extends to a state on~$A$. Consider the
automorphism group defined by $\Ad h_n^{-it}$. It is not hard to see
that $\phi_\lambda$ satisfies the KMS condition with respect to $\Ad
h_n^{-it}$ at $1$ for this group or equivalently at
$\beta=\ln\frac{1-\lambda}{\lambda}$ for the gauge action $\s_t=\Ad
h_n^{-it/\beta}$.  Everything extends by continuity to $A$. Then
the GNS representation
corresponding to  $\phi_\lambda$ generates a type III$_{\lambda'}$
factor, where $\lambda'=\lambda/(1-\lambda)$ (for a proof see \cite{Ped} 8.15.13).

The simplest way to see the we have full spectral subspaces for the
circle action $\s$ is to replace this version of the Fermion algebra by
the isomorphic copy given by annihilation and creation operators, see
e.g.~\cite{EK}.

To describe the isomorphism, we let $\sigma_j,\ j=1,2,3$ be the Pauli
matrices in their usual representation:
$$ \s_1=\bma 0 & 1\\1&0\ema,\qquad \s_2=\bma 0 & i\\ -i & 0\ema,\qquad
\s_3=\bma -1 & 0\\ 0 & 1\ema.$$ Then the isomorphism is given by
defining $a_j=\sigma_3\otimes\ldots \sigma_3\otimes
(\sigma_1+i\sigma_2)/2$, where the last term is in the $j$-th tensorial
factor. Then the $a_j$, $j\in\N$, and their adjoints $a_j^*$ satisfy
the usual relations of the $C^*$-algebra of the canonical
anticommutation relations (i.e. the Fermion algebra):
$$a_ja_k^*+a_k^*a_j=\delta_{jk}, \ \ \ a_ja_k+a_ka_j=0.$$
The gauge invariant algebra is generated by monomials in the
$a_j,a_k^*$ which have equal numbers of creation and annihilation
operators. Clearly ${A}_1$ is generated by monomials with one more
creation operator than annihilation operator. From the anticommutation
relations above it is now clear that ${A}_1^*{A}_1$ and $A_1A_1^*$ are
dense in the gauge invariant subalgebra. So we have full spectral
subspaces. Thus the main results of the paper apply to this example.

Modular partial isometries are easy to find, since each $a_j$ is an
homogenous partial isometry in $A_1$.  For a single $a_j$ we can employ
the twisted cyclic cocycle to get the index
\begin{align*}
sf_{\phi_\D}(a_ja_j^*\D,a_j\D
a_j^*)=-\phi(a_ja_j^*)
=-\lambda
=-(1+e^\beta)^{-1}.
\end{align*}
Similarly if we have the partial isometry $v$ formed by taking the
product of $n$ distinct $a_j$'s we obtain
$$sf_{\phi_\D}(vv^*\D,v\D v^*)=-n(1+e^\beta)^{-n}.$$
In \cite{CPR2} we made the observation that for modular unitaries
$u_v$, $sf_{\phi_\D}(\D,u_{v}\D u_{v}^*)$ is just Araki's relative
entropy \cite{Ar} of the two KMS weights $\phi_\D$ and $\phi_\D\circ
\Ad u_v$. In this example of the Fermion algebra we see that the
relative entropy depends on two physical parameters, the inverse
temperature $\beta$ and the modulus of the charge $n$ carried by the
product of Fermion annihilation or creation operators appearing in $v$.

\end{document}